\newtheorem{maintheorem}{Theorem}
\newtheorem{teo}{Theorem}[section]
\newtheorem{df}[teo]{Definition}
\newtheorem{pro}[teo]{Proposition}
\newtheorem{cor}[teo]{Corollary}
\newtheorem{lem}[teo]{Lemma}
\newtheorem{rem}[teo]{Remark}
\newtheorem{ex}[teo]{Examples}
\newtheorem{rems}[teo]{Remarks}
\newtheorem{question}[teo]{Question}
\newtheorem{conjecture}[teo]{Conjecture}
\long\def\symbolfootnote[#1]#2{\begingroup
\def\thefootnote{\fnsymbol{footnote}}\footnote[#1]{#2}\endgroup} 
\title{Analytic continuation of holonomy germs of Riccati foliations along Brownian paths.}
\author{Nicolas Hussenot Desenonges}
\begin{document}
\maketitle

\begin{abstract}
Consider a Riccati foliation whose monodromy representation is non elementary and parabolic and consider a non invariant section of the fibration whose associated developing map is onto. We prove that any holonomy germ from any non invariant fibre to the section can be analytically continued along a generic Brownian path. To prove this theorem, we prove a dual result about complex projective structures: let $\Sigma$ be a hyperbolic Riemann surface of finite type endowed with a branched complex projective structure. Such a structure gives rise to a non-constant holomorphic map $\mathcal{D}:\tilde{\Sigma}\rightarrow \mathbb{C}\mathbb{P}^1$ from the universal cover of $\Sigma$ to the Riemann sphere $\mathbb{C}\mathbb{P}^1$ which is $\rho$-equivariant for a morphism  $\rho:\pi_1(\Sigma)\rightarrow PSL(2,\mathbb{C})$. The dual result is the following: if the monodromy representation $\rho$ is parabolic and non elementary and if $\mathcal{D}$ is onto, then for almost every Brownian path $\omega$ in $\tilde{\Sigma}$, $\mathcal{D}(\omega(t))$ does not have limit when $t$ goes to $\infty$. If moreover the projective structure is of parabolic type, we also prove that although $\mathcal{D}(\omega(t))$ does not converge, it converges in the Ces\`{a}ro sense.
\end{abstract}




\section{Introduction.}

Given a complex algebraic foliation, the study of the holonomy maps is crucial since they encode the dynamics of the leaves. This paper is devoted to the problem of analytic continuation of these holonomy maps. This problem which goes back to the times of Painlev\'{e} regained interest recently with the works of F. Loray \cite{L}, Y. Il'yashenko \cite{Il} and G. Calsamiglia, B. Deroin, S. Frankel, A. Guillot \cite{CDFG}. 

Let us explain the context. Consider the following diferential equation in $\mathbb{C}^2$:
\begin{equation}\label{eq-dif}
\frac{dy}{dx}=\frac{P(x,y)}{Q(x,y)}
\end{equation}
where $P$ and $Q$ are polynomials in $\mathbb{C}[X,Y]$ without common factors.

The solutions of \eqref{eq-dif} define a singular holomorphic foliation of complex dimension $1$ in $\mathbb{C}^2$ which can be extended to a singular holomorphic foliation $\mathcal{F}$ of $\mathbb{C}\mathbb{P}^2$.

Let $C_0$, $C_1$ be two complex curves in $\mathbb{C}\mathbb{P}^2$ and $L$ be a leaf of $\mathcal{F}$ which intersects $C_0$ in $p_0$ and $C_1$ in $p_1$. Assume that $p_0$ and $p_1$ are not singularities of the foliation and that the curve $C_0$ (resp. $C_1$) is transverse to $\mathcal{F}$ in $p_0$ (resp. $p_1$). Let $\gamma:[0,1]\rightarrow L$ be a continuous path such that $\gamma(0)=p_0$ and $\gamma(1)=p_1$. Then, one can find a continuous family $\gamma_p:[0,1] \rightarrow\mathbb{C}\mathbb{P}^2$ of paths parameterized by $p \in C_0$ close enough to $p_0$ such that:
\begin{enumerate}
\item $\gamma_p(0)=p$.
\item $\gamma_p(1) \in C_1$.
\item $\gamma_{p_0}=\gamma$.
\item For all $p$, $\gamma_p$ belongs to the leaf through $p$.
\end{enumerate}
The germ of the holomorphic map $p\mapsto \gamma_p(1)$ in $p_0$ is uniquely determined by the relative (i.e. with fixed endpoints) homotopy class of $\gamma$ under the above conditions and is called the \emph{holonomy germ} associated to $\gamma$. 

A rather general question is to define the domain of definition of such a germ:

In \cite{L}, F.Loray conjectures the following:

\begin{conjecture}[Loray]
Let $\mathcal{F}$ be a singular holomorphic foliation in $\mathbb{C}\mathbb{P}^2$. Let $L_1$ and $L_2$ be two non invariant projective lines and $h:(L_1,p_1)\rightarrow(L_2,p_2)$ be a holonomy germ. Then $h$ can be analytically continued along any continuous path which avoids a countable set of points called singularities of $h$.
\end{conjecture}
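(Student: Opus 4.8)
The plan is to convert analytic continuation of $h$ into a path-lifting problem for $\mathcal{F}$ and then to show that the obstructions to lifting form a thin set. Continuing $h$ along a path $\gamma\subset L_1$ means choosing, for each $p$ in a small disc $\Delta\subset L_1$ around $p_1$, the lift $\gamma_p$ of $\gamma$ inside the leaf through $p$ normalized by $\gamma_{p_1}=\gamma$, and following the point $\gamma_p(1)\in L_2$; the germ fails to be prolonged at a parameter $t$ precisely when this family of lifts cannot be extended past time $t$. First I would apply Seidenberg's reduction of singularities, replacing $\mathcal{F}$ by a foliation $\widetilde{\mathcal{F}}$ with only reduced singularities on a smooth surface $M\to\mathbb{CP}^2$ obtained by finitely many blow-ups, and $L_1,L_2$ by their strict transforms $\widehat L_1,\widehat L_2$ (again rational curves). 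Since $L_1$ and $L_2$ are non invariant, each $\widehat L_i$ is transverse to $\widetilde{\mathcal{F}}$ outside a finite set $T_i$ of tangency points. Pulling the problem up to $M$, one checks that the singular parameters of $h$ along $\gamma$ are exactly those $t$ at which the lifted leaf-path $\widetilde\gamma_p(s)$ (in the limit $p\to p_1$) runs into an exceptional divisor, into a local separatrix of a reduced singularity of $\widetilde{\mathcal{F}}$, or into a tangency point of $T_2$.

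\medskip

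The second step is local and, I expect, routine. Near each reduced singularity the holonomy is normalizable (linearizable in the hyperbolic and non-resonant cases, with explicit parabolic or resonant normal forms otherwise), and near each point of an exceptional divisor or of $T_2$ there is an explicit local model. In every such chart the set of ``bad'' values of $p$ — those whose leaf-path is pushed into the forbidden locus before time $1$ — is the zero set of a holomorphic function, namely the pull-back of a local defining equation of the forbidden locus by the (locally single-valued) holonomy $p\mapsto\widetilde\gamma_p(s)$. Provided this function is not identically zero, its zero set is discrete. Exhausting $\gamma$ by a countable family of subarcs along each of which the lifted leaf-path remains in finitely many such charts, one concludes that the singular parameters of $h$ form a countable set — the desired statement.

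\medskip

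The genuine obstacle — and, I believe, essentially the crux of Loray's conjecture — is the hypothesis ``not identically zero'', i.e.\ the control of the \emph{global} behaviour of the lifted leaf-path: one must rule out that, for an uncountable (Cantor-like) set of parameters $t$, the family $\widetilde\gamma_p(s)$ accumulates on the forbidden locus, equivalently that the transcendental graph of $h$ is swallowed by, or winds infinitely along, one of the finitely many $\widetilde{\mathcal{F}}$-invariant algebraic curves or exceptional divisors. For a general algebraic $\mathcal{F}$ there is in general no finite-dimensional algebraic family naturally carrying the analytic continuations of $h$, so some substitute — a Nevanlinna-type growth estimate for the lifted leaf-path, or a transcendence/monodromy argument — would be required here. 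I would therefore first run the entire scheme in the \emph{Riccati} case, where $M$ is a flat $\mathbb{CP}^1$-bundle over a curve, $\widetilde{\mathcal{F}}$ has only fibre-type reduced singularities over the punctures, and the whole picture linearizes into the $\rho$-equivariant developing map $\mathcal{D}:\widetilde{\Sigma}\to\mathbb{CP}^1$ of the associated projective structure recalled in the abstract. There the global control becomes a statement about the asymptotics of $\mathcal{D}$ along paths in $\widetilde{\Sigma}$, which one can hope to settle; and when ``any continuous path'' is relaxed to ``a generic Brownian path'', this is exactly what the present paper carries out.
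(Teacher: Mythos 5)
This statement is not a theorem proved in the paper: it is Loray's \emph{conjecture}, and the paper itself records (citing \cite{CDFG}) that it is \emph{false} in general. Calsamiglia--Deroin--Frankel--Guillot produce, for generic algebraic foliations of $\mathbb{C}\mathbb{P}^2$ with hyperbolic singularities and no invariant curve, a holonomy germ between a line and a curve whose singular set contains a Cantor set; and they produce Riccati-type examples where a holonomy germ $h:(L_1,p_1)\to(L_2,p_2)$ between two lines has \emph{full} singular set $L_1$. So no proof of the statement as written can be correct.

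Your proposal is candid about where it breaks: the ``not identically zero'' / discrete-zero-set claim in the second step is exactly the point the counterexamples defeat. There is nothing forcing the set of bad parameters to be the zero locus of a single-valued holomorphic function on the whole arc, because the lifted leaf-path $\widetilde\gamma_p$ is only locally single-valued and its analytic continuation around the singular fibres (or invariant curves) produces a genuinely multivalued, typically non-discrete picture. In the Riccati case this multivaluedness is governed by the monodromy group $\Gamma\subset PSL(2,\mathbb{C})$ acting on $\mathbb{C}\mathbb{P}^1$, and when $\Gamma$ is dense the preimages of a point under the developing map accumulate everywhere (Proposition~3.3 of the paper), which is how \cite{CDFG} obtain a full singular set. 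Your instinct to retreat to the Riccati/projective-structure picture and to weaken ``any continuous path'' to ``a generic Brownian path'' is exactly the right move, and is the actual content of the paper's Theorems~A and~B. But as a proof of the conjecture as stated, the argument cannot be completed, because the conjecture is false.
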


This was motivated by the following result which can be found in \cite[Theorem 1.1]{CDFG} and which is a consequence of theorem $1$ of Painlev\'{e} (see \cite{L}): if the polynomials $P$ and $Q$ of equation \eqref{eq-dif} are such that $w=Pdx-Qdy$ is a closed one-form, then Loray's conjecture is true.

In the same vein, Y. Il'yashenko asks the following \cite{Il}:
\begin{question}[Ilyashenko]
Consider the foliation in $\mathbb{C}^2$ associated to equation \eqref{eq-dif} and let $h:(L_1,p_1)\rightarrow(L_2,p_2)$ be a holonomy germ between two lines.
Can $h$ be analytically continued along a generic ray emerging from $p_1$?
\end{question}

In \cite{CDFG}, the authors prove that Loray's conjecture fails to be true. More precisely, they prove the followings:
\begin{itemize}
\item  For an algebraic foliation of $\mathbb{C}\mathbb{P}^2$ with hyperbolic singularities and without invariant curves (these are generic properties), there is a holonomy germ between a projective line and a curve whose set of singularities contains a Cantor set.
\item  There exists algebraic foliations of $\mathbb{C}\mathbb{P}^2$ admitting a holonomy germ $h:(L_1,p_1)\rightarrow(L_2,p_2)$ between complex lines whose set of singularities is the whole $L_1$.
\end{itemize}
Our main result is more particularly linked to the second assertion. To see this, let us explain briefly how they build such a foliation. They consider a parabolic projective structure on the punctured Riemann sphere whose monodromy group is dense in $PSL(2,\mathbb{C})$. Suspending the monodromy representation, one obtains a $\mathbb{C}\mathbb{P}^1$-fibre bundle over the punctured Riemann sphere endowed with a non singular foliation transverse to every fibre and a section $\Delta$ (given by the developing map). There exist local models (introduced by M. Brunella in \cite{B}) over the cusps which allow to compactifie the $\mathbb{C}\mathbb{P}^1$-fibre bundle, the foliation and the section . After the compactification, one gets a singular holomophic foliation on a $\mathbb{C}\mathbb{P}^1$-fibre bundle over $\mathbb{C}\mathbb{P}^1$ whose fibers are transverse to the foliation, except the ones over the punctures which are invariant lines containing the singularities of the foliation. Now, consider a holonomy germ $h$ between a transverse fibre and the section given by a developing map of the projective structure. Then $h$ is a local inverse of the developing map. If the monodromy group of the projective structure is dense in $PSL(2,\mathbb{C})$, they prove that $h$ has full singular set. The $\mathbb{C}\mathbb{P}^1$-bundles over $\mathbb{C}\mathbb{P}^1$ are parameterized by an integer $n\geq0$. Choosing conveniently the local models around the cusps, this number is $n=1$, so that the ambient space is the first Hirzebruch surface $\mathbb{F}_1$ which has a unique exceptional curve disjoint from $\overline{\Delta}$. Blow down gives $\mathbb{C}\mathbb{P}^2$ with the desired property.

This paper is based on the following observation: with the same hypothesis, even if the germ $h$ has full singular set, $h$ can be analytically continued along a generic Brownian path, i.e. the Brownian motion does not see this full set of singularities.

The foliations previously defined on Hirzebruch surfaces are examples of Riccati foliations. More generally, a \emph{Riccati foliation} is the data of $(\Pi,M,X,\mathcal{F})$ where $M$ is a compact complex surface, $X$ is a compact Riemann surface, $\Pi:M\to X$ is a $\mathbb{C}\mathbb{P}^1$-fibre bundle and $\mathcal{F}$ is a singular holomorphic foliation transverse to all the fibers except a finite number of them which are invariant lines for the foliation and contain the singularities. The main theorem of this paper is:

\begin{maintheorem}\label{theoRiccati}
Let $\mathcal{F}$ be a Riccati foliation with a parabolic and non elementary monodromy group. Let $F$ be a non invariant fiber and $s_0$, $s_1$ be two sections of the bundle. Denote by $\overline{S_0}$ and $\overline{S_1}$ the images of $X$ by $s_0$ and $s_1$. Endow $F$, $\overline{S_0}$ and $\overline{S_1}$ with complete metrics in their conformal class. Assume moreover that the developing map associated to $\overline{S_0}$ is onto.
\begin{enumerate}
\item If $h:(F,p)\to (\overline{S_0},p_0)$ is a holonomy germ, then $h$ can be analytically continued along almost every Brownian path in $F$ starting at $p$.
\item If $h:(\overline{S_1},p_1)\to(\overline{S_0},p_0)$ is a holonomy germ, then $h$ can be analytically continued along almost every Brownian path in $\overline{S_1}$ starting at $p_1$.
\end{enumerate}
\end{maintheorem}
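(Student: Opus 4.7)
The plan is to translate the theorem into the dual statement on developing maps announced in the abstract, and then to invert the direction of that translation via the conformal invariance of two-dimensional Brownian motion. Setting $\Sigma := X \setminus \Pi(\mathrm{Sing}(\mathcal{F}))$, the bundle over $\Sigma$ is a flat $\mathbb{C}\mathbb{P}^1$-bundle, and the sections $s_0, s_1$ unfold on the universal cover to developing maps $\mathcal{D}_0, \mathcal{D}_1 : \tilde\Sigma \to \mathbb{C}\mathbb{P}^1$, with $\mathcal{D}_0$ onto by hypothesis. In the trivialization $\tilde M|_{\tilde\Sigma} \cong \tilde\Sigma \times \mathbb{C}\mathbb{P}^1$ the leaves of $\mathcal{F}$ are horizontal slices $\tilde\Sigma \times \{y\}$ and $\tilde s_i(z) = (z, \mathcal{D}_i(z))$. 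After choosing a lift $\tilde p$ of $\Pi(p)$ and identifying $F$ with $\mathbb{C}\mathbb{P}^1$ via the trivialization, the holonomy germ in (1) is exactly a local inverse of $\mathcal{D}_0$ at $p_0 = \mathcal{D}_0(\tilde p)$, and the germ in (2) is modelled by $\mathcal{D}_0^{-1} \circ \mathcal{D}_1$. Analytic continuation of either germ along a path thus reduces to lifting that path through $\mathcal{D}_0$ in $\tilde\Sigma$.

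For item (1) I would let $\tilde B_t$ be a Brownian motion on $\tilde\Sigma$ from $\tilde p$. Conformal invariance of planar Brownian motion supplies a continuous non-decreasing functional $A(t)$ such that $W_u := \mathcal{D}_0(\tilde B_{A^{-1}(u)})$ is a Brownian motion on $\mathbb{C}\mathbb{P}^1$ starting at $p_0$, run up to the (possibly finite) time $A(\infty)$. The crucial step is to show $A(\infty) = \infty$ almost surely: on the event $\{A(\infty) < \infty\}$, uniform continuity of Brownian sample paths on the compact target $\mathbb{C}\mathbb{P}^1$ over bounded intervals would force $\lim_{u \to A(\infty)^-} W_u$ to exist, hence also $\lim_{t \to \infty} \mathcal{D}_0(\tilde B_t)$, contradicting the dual theorem of the abstract. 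Then $W$ is a full-time Brownian motion on $\mathbb{C}\mathbb{P}^1$ from $p_0$ with the continuous $\mathcal{D}_0$-lift $u \mapsto \tilde B_{A^{-1}(u)}$; since $W$ has the law of Brownian motion in $F$ from $p$ (up to irrelevant reparameterization) and since the countable critical-value set of $\mathcal{D}_0$ is polar for 2D Brownian motion, item (1) follows.

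Item (2) is then deduced by chaining: a Brownian motion on $\overline{S_1}$ from $p_1$ lifts to a Brownian motion $\tilde B'$ on $\tilde\Sigma$ from a preimage $\tilde p_1$; the same time-change argument applied to $\mathcal{D}_1$ shows that $\mathcal{D}_1(\tilde B')$ is, after reparameterization, a full-time Brownian motion $V$ on $\mathbb{C}\mathbb{P}^1$ starting at $\mathcal{D}_1(\tilde p_1) = \mathcal{D}_0(\tilde p_0)$, with $\tilde p_0$ prescribed by the germ identification $h(p_1) = p_0$. Item (1) applied to $\mathcal{D}_0$ and $\tilde p_0$ then yields a $\mathcal{D}_0$-lift of $V$ in $\tilde\Sigma$ almost surely, and projecting this lift back to $\overline{S_0}$ supplies the analytic continuation of $h$.

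The substantive obstacle is the non-convergence of $\mathcal{D}(\tilde B_t)$ itself, i.e. the dual theorem of the abstract; granted it, the remaining checks are routine: that the stopped Brownian motion $W$ on $[0, A(\infty))$ extends continuously past any finite $A(\infty)$, that 2D Brownian motion avoids the countable branch set of $\mathcal{D}_i$ and does not cluster badly at the cusps of $\tilde\Sigma$, and that ``almost every Brownian path in $F$'' and ``almost every sample path of $W$ on $\mathbb{C}\mathbb{P}^1$ from $p_0$'' label the same event in path space modulo time reparameterization.
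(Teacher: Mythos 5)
Your proposal is correct and follows essentially the same route as the paper: item (1) is, after identifying the germ $h$ with a local inverse of $\mathcal{D}_0$ on $F\cong\mathbb{C}\mathbb{P}^1$, exactly assertion~2 of Theorem~\ref{theoprincipal} (whose proof already contains your conformal-invariance argument for $A(\infty)=\infty$), and item (2) is obtained, as in the paper, by writing $h=\mathcal{D}_0^{-1}\circ\mathcal{D}_1$, lifting the Brownian motion to $\tilde\Sigma$, pushing forward by $\mathcal{D}_1$, and invoking item (1). The only loose phrase is that ``the same time-change argument applied to $\mathcal{D}_1$'' yields a full-time Brownian motion: if $\mathcal{D}_1$ is not onto the time-change has finite total mass, but L\'evy's theorem as stated in the paper still provides an ambient full-time Brownian motion agreeing with $\mathcal{D}_1\circ\tilde B'$ up to the stopping time, so item~(1) applies and the conclusion is unaffected.
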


\begin{rem}\label{rem-section} A holomorphic $\mathbb{C}\mathbb{P}^1$-fibre bundle always admits a holomorphic section (see \cite[p 139]{BPV}). \end{rem}

Theorem \ref{theoRiccati} will be a consequence of a theorem concerning complex projective structures that we explain now.

\paragraph{Complex projective structures. }
Let $\Sigma$ be a Riemann surface. A \emph{branched complex projective structure} in $\Sigma$ is a $(PSL(2,\mathbb{C}),\mathbb{C}\mathbb{P}^1)$-structure where $\mathbb{C}\mathbb{P}^1$ is the Riemann sphere and $PSL(2,\mathbb{C})$ is the group of M\"{o}bius transformations acting on $\mathbb{C}\mathbb{P}^1$. Such a structure gives rise to a non-constant holomorphic map $\mathcal{D}:\tilde{\Sigma}\rightarrow \mathbb{C}\mathbb{P}^1$ from the universal cover of $\Sigma$ to the Riemann sphere $\mathbb{C}\mathbb{P}^1$  and to a morphism  $\rho:\pi_1(\Sigma)\rightarrow PSL(2,\mathbb{C})$ satisfying the following equivariance relation:
$$\forall x \in \tilde{\Sigma},\, \forall \alpha \in \pi_1(\Sigma),\,  \mathcal{D}(\alpha.x)=\rho(\alpha).\mathcal{D}(x)$$
The map $\mathcal{D}$ (well defined up to a post-composition by a M\"{o}bius transformation) is called developing map, the morphism $\rho$ is called monodromy representation, and the group $\rho(\pi_1(\Sigma))$ is called monodromy group (see section \ref{sectionsp} for more details on projectives structures).

If $\Sigma$ is not compact, we will need parabolicity hypothesis around the cusps: a representation is said to be \emph{parabolic} if the holonomy around each cusp is parabolic (i.e. it is conjugated to the group generated by the transformation $z\mapsto z+1$). A complex projective structure is said to be \emph{parabolic} if in some coordinate $z$ around each puncture, some developing map writes $\mathcal{D}(z)=\frac{1}{2i\pi}\log z$. Our theorem \ref{theoprincipal} is proved under the hypothesis of parabolicity of the monodromy representation while theorem \ref{theo2} is proved under the stronger hypothesis of parabolicity of the projective structure.

\paragraph{The image of a generic Brownian path by the developing map. }
In \cite{CDFG}, the authors prove that if the monodromy group is a dense subgroup of $PSL(2,\mathbb{C})$ and if $h$ is a germ of $\mathcal{D}^{-1}$ in $z_0$, then the set of singularities for the analytic continuation of $h$ is all the Riemann sphere $\mathbb{C}\mathbb{P}^1$ (see proposition \ref{proCDFG}). In other words, for any point $z$ in $\mathbb{C}\mathbb{P}^1$, there is a continuous path $c$ from $z_0$ to $z$ such that $h$ cannot be analytically continued along $c$ (we will give a proof of this fact in section \ref{sectionDeroin}). As it has been explained earlier, with the same hypothesis, $h$ can be analytically continued along a generic Brownian path (i.e. the Brownian motion does not see this full set of singularities). More precisely:

\begin{maintheorem}\label{theoprincipal}

Let $\Sigma$ be a Riemann surface of finite type endowed with a branched projective structure. Let $\mathcal{D}:\tilde{\Sigma}\rightarrow \mathbb{C}\mathbb{P}^1$ be a developing map, $\rho:\pi_1(\Sigma)\rightarrow PSL(2,\mathbb{C})$ be the monodromy representation associated to $\mathcal{D}$. Let $(x_0,z_0)$ be a couple of points in $\tilde{\Sigma}\times\mathbb{C}\mathbb{P}^1$ such that $\mathcal{D}(x_0)=z_0$ and let $h$ be the germ of $\mathcal{D}^{-1}$ such that $h(z_0)=x_0$. We also define the Brownian motion in $\tilde{\Sigma}$ as the one associated to the hyperbolic metric with constant curvature $-1$ and the Brownian motion in $\mathbb{C}\mathbb{P}^1$ as the one associated to any complete metric in its conformal class. We have:

Assume that $\mathcal{D}$ is onto and that the monodromy group $\Gamma=\rho(\pi_1(\Sigma))$ is parabolic and non elementary.

Then, the two following equivalent assertions are satisfied:
\begin{enumerate}
\item For almost every Brownian path $\omega$ starting from $x_0$, $\mathcal{D}(\omega(t))$ does not have any limit when $t$ goes to $\infty$.
\item For almost every Brownian path $\omega$ starting from $z_0$, the map $h$ can be analytically continued along $\omega([0,\infty[)$.
\end{enumerate}

\end{maintheorem}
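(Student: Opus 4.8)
The plan is to establish the equivalence of the two assertions first, then prove the first one, which is the substantive statement. The equivalence is essentially a translation: analytic continuation of the germ $h=\mathcal{D}^{-1}$ along a path $\omega$ in $\mathbb{CP}^1$ corresponds, via $\mathcal{D}$, to lifting $\omega$ to a path in $\tilde\Sigma$; so $h$ fails to continue along $\omega([0,\infty[)$ exactly when the lift escapes to infinity in $\tilde\Sigma$ in such a way that $\mathcal{D}$ of the lift (which is $\omega$ itself) accumulates somewhere — the point being that the conformal type of $\tilde\Sigma$ is that of the disk, and Brownian motion on the disk converges to a boundary point, so the obstruction to continuation is precisely that $\mathcal{D}(\omega(t))$ has a genuine limit. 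One must be a little careful because the Brownian motion downstairs (on $\mathbb{CP}^1$) and the Brownian motion upstairs (on $\tilde\Sigma$) are only related up to a time change along the lift, but time changes do not affect convergence questions, so the two formulations match almost surely.

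For the main assertion, I would argue by contradiction: suppose that on a set of Brownian paths of positive measure, $\mathcal{D}(\omega(t))$ converges to some limit $z_\infty(\omega)\in\mathbb{CP}^1$ as $t\to\infty$. The Brownian path $\omega$ in $\tilde\Sigma\cong\mathbb{D}$ converges almost surely to a point $\xi(\omega)$ in the ideal boundary $\partial\mathbb{D}$, and its image under $\mathcal{D}$ is, up to time change, a Brownian path on $\mathbb{CP}^1$; but a Brownian path on the sphere is recurrent and does not converge, so the fact that $\mathcal{D}(\omega(t))$ converges must come from the lift eventually staying in a region where $\mathcal{D}$ is very degenerate. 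The key structural input is the dichotomy between the "thick" part of $\Sigma$ and the cusp neighborhoods: by parabolicity of $\rho$ around the cusps and the hypothesis that $\mathcal{D}$ is onto, one should be able to show that $\mathcal{D}$ restricted to a lift of a cusp neighborhood still has full image or at least an image whose closure is too large for a convergent image to be consistent with the harmonic-measure / equivariance structure. Concretely, I expect to use the $\rho$-equivariance together with non-elementarity: the limit point $z_\infty$ would have to be a measurable $\rho$-equivariant function of the boundary point $\xi$, i.e. the boundary map would conjugate the action of $\pi_1(\Sigma)$ on $\partial\mathbb{D}$ (with harmonic measure) to the action of $\Gamma$ on $\mathbb{CP}^1$, pushing harmonic measure to a $\Gamma$-stationary measure; non-elementarity then forces this stationary measure to be non-atomic and supported on the limit set, and one derives a contradiction with $\mathcal{D}$ being onto — surjectivity of $\mathcal{D}$ means generic lifts of downstairs Brownian paths hit every part of the sphere, incompatible with the image converging.

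The key steps, in order, are: (i) prove the equivalence of the two statements by the lifting/time-change argument; (ii) reduce to showing $\mathcal{D}(\omega(t))$ diverges a.s.; (iii) recall that Brownian motion on $\mathbb{D}\simeq\tilde\Sigma$ converges to the boundary $\partial\mathbb{D}$ and that harmonic measure on $\partial\mathbb{D}$ is ergodic/stationary under $\pi_1(\Sigma)$; (iv) assuming convergence on a positive-measure set, build the measurable equivariant boundary map $\xi\mapsto z_\infty(\xi)$ and transport harmonic measure to a $\Gamma$-stationary measure $\nu$ on $\mathbb{CP}^1$; (v) invoke non-elementarity to pin down the structure of $\nu$ (non-atomic, carried by the limit set $\Lambda_\Gamma$); (vi) use surjectivity of $\mathcal{D}$ and the cusp local models (where $\mathcal{D}(z)=\tfrac{1}{2i\pi}\log z$) to contradict the confinement of the image to $\Lambda_\Gamma$ — for instance, because near a cusp the developing map's image is a full punctured disk's worth, i.e. an unbounded region, while the path's image converges. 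The main obstacle I anticipate is step (vi), reconciling the global surjectivity hypothesis on $\mathcal{D}$ with the local behavior near cusps under the Brownian motion; in particular, controlling how much time the Brownian path spends in the cusps (it is a null-recurrent-type excursion structure in the thin part) and ensuring that these excursions carry the image of the path across large portions of the sphere with positive probability is the delicate analytic heart of the argument. The hypothesis that $\mathcal{D}$ is onto is presumably exactly what makes this possible, and I expect the proof to exploit it through a Borel–Cantelli-type argument over successive cusp excursions.
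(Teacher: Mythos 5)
Your equivalence step (i) is the same as the paper's (conformal invariance of Brownian motion, L\'{e}vy's theorem), but the core of your argument for the main assertion (steps (iii)--(vi)) takes a genuinely different route and, as sketched, does not close.

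The paper does not argue by contradiction via a boundary map. It discretizes the Brownian path (Furstenberg--Lyons--Sullivan) into a right random walk $X_{N_k}$ in $\pi_1(\Sigma)$, pushes it forward by $\rho$ to a walk $Y_{N_k}$ in $\Gamma$, and uses the positivity of the Lyapunov exponent to get a quantitative contraction/expansion dichotomy (Proposition~\ref{propma}): $Y_{N_k}$ sends the complement of a tiny ball $D(y_k, e^{-\lambda'k})$ into a tiny ball $D(z_k, e^{-\lambda'k})$, but sends the even tinier ball $D(y_k,e^{-2\lambda''k})$ far from $z_k$. Surjectivity of $\mathcal{D}$ is then used \emph{exactly once} and quantitatively (Lemma~\ref{lem1}): every small ball in $\mathbb{CP}^1$ has a preimage ball of comparable radius inside a fixed compact set $D(0,r)\subset\tilde\Sigma$. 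A conditional Borel--Cantelli estimate (Lemma~\ref{lemtechnique}, giving $\tilde{\mathbb{P}}(E_k\mid E_{k-1}^c,\dots)\geq c/k$) then shows that the Brownian excursions $c_k$ infinitely often enter that preimage ball, so the images $\rho(X_{N_k})\mathcal{D}(c_k)$ oscillate between $D(z_k,e^{-\lambda'k})$ and $D(z_k,\tfrac12)^c$, preventing convergence. Your instinct at the very end (``a Borel--Cantelli-type argument over successive excursions'') gestures in this direction, but the excursions are over fundamental-domain/discretization steps, not cusp excursions, and the driving estimate is the $c/k$ hitting probability for a shrinking disc, not anything about cusps.

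Concretely, step (vi) of your plan has a genuine gap. You want to contradict ``the stationary measure $\nu$ is carried by the limit set $\Lambda_\Gamma$'' with ``$\mathcal{D}$ is onto,'' but there is no contradiction there: $\mathcal{D}$ can be onto while $\mathcal{D}(\omega(t))$ still converges, and indeed in the non-onto case (your steps (iii)--(v) are entirely consistent with it) the paper proves convergence does happen. When $\Gamma$ is dense, $\Lambda_\Gamma$ is all of $\mathbb{CP}^1$, so the ``confinement'' gives nothing. Moreover, the cusp local model $\mathcal{D}\circ\phi(z)=\tfrac{1}{2i\pi}\log z$ that you invoke is the hypothesis of a \emph{parabolic projective structure}, which is only assumed in Theorem~\ref{theo2}; Theorem~\ref{theoprincipal} assumes only parabolicity of the monodromy representation, which is strictly weaker (see Remark~\ref{rem1}), so you cannot use those local normal forms here. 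And even where they do hold, the lifted image near a cusp lies in a half-plane, not ``a full punctured disk's worth'' of $\mathbb{CP}^1$ — so the cusps are not where surjectivity lives. The correct use of surjectivity is the compact-core preimage estimate of Lemma~\ref{lem1}, which your sketch does not contain.
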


The equivalence of the two assertions is a direct consequence of the conformal invariance of the Brownian motion. In order to prove the first assertion, we will use the discretization procedure of Furstensberg-Lyons-Sullivan. In our context, this procedure associates to every Brownian path $\omega$ in $\tilde{\Sigma}$ a sequence $X_n(\omega)$ of elements of $\pi_1(\Sigma)$ which corresponds more or less to the sequence of fundamental domains visited by $\omega$. The sequence $X_n(\omega)$ turns out to be the realisation of a right random walk, i.e. $X_{n+1}(\omega)=X_n(\omega)\cdot\gamma_{n+1}(\omega)$, the $\gamma_n(\omega)$ being independent and identically distributed. Pushing $X_n(\omega)$ forward by $\rho$ gives a right random walk $Y_n(\omega)$ in $\rho(\pi_1(\Sigma))<PSL(2,\mathbb{C})$. Random walks in such matricial groups have been widely studied. A classical result of the theory is the following: if the support of the measure $\mu$ defining the random walk $Y_n$ is non elementary and if $\nu$ is a $\mu$-stationnary measure on $\mathbb{C}\mathbb{P}^1$, then for almost every $\omega$, there exists $z(\omega)\in\mathbb{C}\mathbb{P}^1$ such that:
$$Y_n(\omega)\cdot\nu\underset{n\to\infty}\longrightarrow\delta_{z(\omega)}.$$
In view of this property, theorem \ref{theoprincipal} is surprising because one could think at first glance that this contraction property would implie that $\mathcal{D}(\omega(t))\underset{t\to\infty}\to z(\omega)$.

In section \ref{sectionproof1}, we will give a new statement of the last theorem including the case where $\mathcal{D}$ is not onto. In this case, the opposite conclusion holds: for almost every Brownian path $\omega$ starting from $x_0$, there is a point $z(\omega)$ such that $\lim\limits_{t \to\infty}\mathcal{D}(\omega(t))=z(\omega)$, which is equivalent to the following: for almost every Brownian path $\omega$ starting from $z_0$, the map $h$ cannot be analytically continued along $\omega([0,\infty[)$.

\paragraph{The family of harmonic measures. }
At the beginning of this study, we did not think that theorem \ref{theoprincipal} was realistic. On the contrary, we expected to prove that, in both cases ($\mathcal{D}$ onto and $\mathcal{D}$ not onto), the following holds: for all $x$ in $\tilde{\Sigma}$, for almost every Brownian path $\omega$ starting at $x$, there is a point $z(\omega)$ such that $\lim\limits_{t \to\infty}\mathcal{D}(\omega(t))=z(\omega)$. The existence of such a point $z(\omega)$ would allow to associate to any projective structure on $\Sigma$ a family of measures $(\nu_x)_{x\in\tilde{\Sigma}}$ in $\mathbb{P}^1$ in the following way: if $\mathbb{P}_{x}$ is the Wiener-measure on the set $\Omega_{x}$ of continuous paths starting at $x$, for any Borel set $A$ in $\mathbb{P}^1$, we should have defined:

$$\nu_x(A)=\mathbb{P}_{x}\left(w\in\Omega_{x} \text{ such that } z(\omega)\in A\right).$$

Altough $\mathcal{D}(\omega(t))$ does not converge when $t$ goes to $\infty$ (in the case where $\mathcal{D}$ is onto) in the classical sense, $\mathcal{D}(\omega(t))$ converges almost surely in the Ces\`{a}ro sense. This means:

\begin{maintheorem}\label{theo2}
Let $\Sigma$ be a hyperbolic Riemann surface of finite type endowed with a branched complex projective structure of parabolic type. Let $\mathcal{D}:\tilde{\Sigma}\rightarrow \mathbb{C}\mathbb{P}^1$ be a developing map and $\rho:\pi_1(\Sigma)\rightarrow PSL(2,\mathbb{C})$ be the representation associated with $\mathcal{D}$. Assume moreover that $\rho$ is non-elementary. 
Then, for every $x$ in the universal cover $\tilde{\Sigma}$ and for almost every Brownian path $\omega$ starting from $x$, there exists $z(\omega) \in \mathbb{C}\mathbb{P}^1$ such that:
$$\frac{1}{t}\cdot \displaystyle\int_0^t\delta_{\mathcal{D}(\omega(s))}\cdot ds\underset{t\to\infty}\longrightarrow\delta_{z(\omega)}.$$

\end{maintheorem}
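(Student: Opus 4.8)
The plan is to leverage Theorem~\ref{theoprincipal} together with the discretization machinery already in place. Under the hypothesis that the projective structure is of parabolic type, the developing map near a cusp writes $\mathcal{D}(z)=\frac{1}{2i\pi}\log z$, hence is automatically onto, so Theorem~\ref{theoprincipal} applies and tells us that for almost every Brownian path $\omega$, the trajectory $\mathcal{D}(\omega(t))$ has no limit as $t\to\infty$. The goal is to upgrade this non-convergence to Ces\`{a}ro convergence towards a single Dirac mass $\delta_{z(\omega)}$. The natural candidate for $z(\omega)$ is the limit point $z(\omega)\in\mathbb{C}\mathbb{P}^1$ supplied by the random walk theory quoted in the introduction: applying the Furstenberg--Lyons--Sullivan discretization to $\omega$ gives a sequence $X_n(\omega)\in\pi_1(\Sigma)$, pushing forward by $\rho$ gives a right random walk $Y_n(\omega)$ in the non-elementary group $\Gamma<PSL(2,\mathbb{C})$, and the contraction property yields $Y_n(\omega)\cdot\nu\to\delta_{z(\omega)}$ for any stationary measure $\nu$. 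One then also gets that $Y_n(\omega)^{-1}=\rho(X_n(\omega))^{-1}$ contracts the sphere: there is a ``repelling'' data so that outside a shrinking neighbourhood of some point, $Y_n(\omega)$ maps everything close to $z(\omega)$.

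First I would make precise the relation between the continuous trajectory $\mathcal{D}(\omega(t))$ and the discrete data. The discretization attaches to $\omega$ a sequence of stopping times and a nested sequence of fundamental domains $X_n(\omega)\cdot \mathcal{F}_0$ visited by $\omega$; on the time interval between consecutive stopping times the path stays (mostly) in a bounded number of translated fundamental domains, so $\mathcal{D}(\omega(t))$ stays in $\rho(X_n(\omega))\cdot K$ for a fixed compact-ish set $K\subset\mathbb{C}\mathbb{P}^1$ (the image under $\mathcal{D}$ of a bounded piece of $\tilde\Sigma$, suitably enlarged to handle excursions into the cusps). Since $\rho(X_n(\omega))$ contracts a set of full $\nu$-measure, and in fact contracts any fixed set whose closure avoids the ``bad'' direction, one gets that $\rho(X_n(\omega))\cdot K$ is eventually contained in an arbitrarily small ball around $z(\omega)$ for all but a density-zero set of indices $n$. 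The point is that the directions $z_0, \mathcal{D}(x_0)$ that resist contraction change with $n$, but by the martingale/tightness arguments used for stationary measures, the proportion of time the path spends near such resisting directions is negligible in the Ces\`{a}ro average.

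The key steps in order: (1) recall the discretization and the associated i.i.d.\ structure, and the a.s.\ convergence $Y_n(\omega)\cdot\nu\to\delta_{z(\omega)}$, together with the uniformity of contraction off a small neighbourhood (Furstenberg, or the Benoist--Quint / Bougerol-style estimates); (2) control the Brownian bridge between successive discretization times --- show that $\int$ over one such time slab of $\delta_{\mathcal{D}(\omega(s))}$, normalized, is within $o(1)$ (in weak-$*$ distance) of a measure supported in $\rho(X_n(\omega))\cdot K$, using that the proportion of time spent deep in a cusp is controlled (here the parabolic-type hypothesis, i.e.\ $\mathcal{D}(z)=\frac{1}{2i\pi}\log z$, is essential: it gives an explicit, integrable relation between the position in the cusp and the value of $\mathcal{D}$, so excursions into cusps do not send $\mathcal{D}(\omega(s))$ off to produce a stray limit); (3) combine: for a test function $\varphi\in C(\mathbb{C}\mathbb{P}^1)$, write $\frac1t\int_0^t\varphi(\mathcal{D}(\omega(s)))ds$ as a weighted average over slabs, bound $|\varphi(\mathcal{D}(\omega(s)))-\varphi(z(\omega))|$ on slab $n$ by the oscillation of $\varphi$ on $\rho(X_n(\omega))\cdot K$ plus the cusp error, and invoke a Ces\`{a}ro/Kronecker lemma: an a.s.-bounded sequence tending to $0$ along a density-one set of indices has Ces\`{a}ro limit $0$, provided the slab lengths are not too wildly unbalanced (which follows from integrability of the first return time of the discretization, a standard input).

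The main obstacle I expect is step (2): making rigorous that the Brownian excursions, especially those penetrating the cusps of $\Sigma$, contribute negligibly to the Ces\`{a}ro average of $\delta_{\mathcal{D}(\omega(s))}$. Near a cusp the hyperbolic Brownian motion spends, over a long time horizon, only a logarithmically small fraction of time at large depth, but one must pair this with the fact that $\mathcal{D}=\frac{1}{2i\pi}\log z$ blows up precisely there, so a naive bound is not enough; one needs the explicit parabolic normal form to see that $\mathcal{D}(\omega(s))$, while unbounded, wanders in a controlled way (essentially a one-dimensional Brownian motion in the $\log$-coordinate) whose time-average still concentrates. A secondary difficulty is reconciling the two different ``limit points'' in play --- the $z(\omega)$ coming from the random walk on $\Gamma$ versus any accumulation of $\mathcal{D}(\omega(t))$ --- and showing the Ces\`{a}ro limit is exactly the former; this requires checking that the compact set $K$ used to trap $\mathcal{D}(\omega(t))$ between discretization times is genuinely fixed (independent of $n$) up to the cusp corrections, which is where the finite-type hypothesis on $\Sigma$ and the boundedness of a fundamental domain modulo cusps enters.
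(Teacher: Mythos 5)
Your overall scaffold --- discretization of Furstenberg--Lyons--Sullivan, identifying $z(\omega)$ with the random-walk limit point coming from Proposition~\ref{propma}, and working slab by slab between the stopping times $S_{N_k}$ --- is indeed the paper's approach. Two problems prevent your argument from closing. A minor one: parabolic type does \emph{not} imply $\mathcal{D}$ is onto. Example~\ref{exstructproj}(2) (Kleinian quotients) and the uniformizing structure of any cusped hyperbolic surface are parabolic with non-surjective developing maps. That case is in fact easy for Ces\`aro convergence --- when $\mathcal{D}$ is not onto, $\mathcal{D}(\omega(t))$ itself converges by the second half of Theorem~\ref{theoprincipal2} --- but it must be treated as a separate case, not assumed away.

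The serious gap is in your step (2). The plan of trapping $\mathcal{D}(\omega(t))$ inside $\rho(X_{N_k})\cdot K$ for a fixed compact $K\subset\mathbb{C}\mathbb{P}^1$ avoiding the bad direction cannot be set up: when $\mathcal{D}$ is onto, the $\mathcal{D}$-image of any piece of $\tilde\Sigma$ large enough to contain the excursion $c_k$ is all of $\mathbb{C}\mathbb{P}^1$, so $K$ necessarily contains the bad point $y_k$, and $\mathcal{D}(c_k)$ does enter neighbourhoods of $y_k$. What must be estimated is the \emph{occupation time}
$T_k=\mathrm{leb}\left\{t\in[S_{N_k},S_{N_{k+1}}]\ :\ \mathcal{D}(c_k(t))\in D(y_k,e^{-\lambda'k})\right\}$,
since by Proposition~\ref{propma} and $\rho$-equivariance, $\mathcal{D}(\omega(t))\notin D(z_k,e^{-\lambda'k})$ forces $\mathcal{D}(c_k(t))\in D(y_k,e^{-\lambda'k})$. ``Martingale/tightness arguments'' is a name, not a mechanism, for the claim $T_k\to 0$, and ``density-one set of indices'' is too weak to convert to the continuous-time average without a quantitative rate. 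The paper proves $T_k\to 0$ a.s.\ by Borel--Cantelli, establishing $\tilde{\mathbb{P}}(T_k\geq\epsilon)\leq 2/k^2$ via three concrete ingredients your sketch does not supply: (i) exponential moments of $S_{N_1}$ (Deroin--Dujardin), to confine $c_k$ to $D(0,K\log k)$ except on an event of probability at most $k^{-2}$; (ii) Proposition~\ref{pro3}: $\mathcal{D}^{-1}(D(y,e^{-\lambda'k}))\cap D(0,K\log k)$ is covered by at most $k^{\alpha}$ hyperbolic discs of radius $e^{-\beta k}$ --- this is exactly where the parabolic normal form $\mathcal{D}\circ\tilde\phi=\mathrm{id}$ near the cusps is used, together with a local degree bound on the compact part and the estimate $\log\|\rho(\gamma)\|\leq a\cdot d(0,\gamma\cdot 0)$; and (iii) a Green-function bound $\mathbb{E}_0[\tau_{D_i}]\leq \mathrm{cst}\cdot e^{-\beta k}$ on the occupation time of each such disc, giving $\sup_y\mathbb{P}_0(\tau_{y,k}\geq\epsilon)\leq k^{-2}$ by Markov's inequality. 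You identified the cusp issue and the relevance of the parabolic hypothesis correctly, but the quantitative link between ``$\mathcal{D}$ blows up logarithmically'' and ``the time-average still concentrates'' is precisely the content that has to be proved, and your sketch leaves it as a gap.
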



Then, to any complex projective structure on $\Sigma$ satisfying the hypothesis of the previous theorem, one can associate a family $(\nu_x)_{x\in\tilde{\Sigma}}$ of harmonic measures on $\mathbb{C}\mathbb{P}^1$:  it is the distribution law of the point $z(\omega)$ (given by the previous theorem) for a Brownian path starting at $x$. This family of measures gives interesting informations about the projective structure. It has been recently studied by Deroin and Dujardin in \cite{DD}. In a recent work in collaboration with S. Alvarez \cite{AH}, we prove the following: for all $x\in\tilde{\Sigma}$, the image of a generic geodesic ray starting at $x$ by the developing map has a limit in $\mathbb{C}\mathbb{P}^1$. The distribution law of this limit point (with respect to the angular measure at $x$ for the Poincare metric) is a measure $\mu_x$ which is proved to be equal to $\nu_x$.

\paragraph{Organisation of the paper. } 
First, section \ref{sectionsp} is devoted to the basic definitions and examples concerning branched projective structures. Then, section \ref{sectionDeroin} deals with generalities about analytic continuation of holomorphic maps. Section \ref{sectionrw}, where we prove a contraction property for random walks in $PSL(2,\mathbb{C})$ and section \ref{sectiondiscret} where we explain the discretization procedure of Furstenberg-Lyons-Sullivan are the necassary backgrounds for the proof of theorem \ref{theoprincipal} in section \ref{sectionproof1}. In section \ref{sectionRiccati}, we prove theorem \ref{theoRiccati} and in the last one \ref{sectionproof2}, we prove theorem \ref{theo2}.\newline

\paragraph{Aknowledgments. }
The major part of this paper comes from the second part of my Phd thesis \cite{Hu}. I am very grateful to Ga\"{e}l Meigniez, Fr\'{e}d\'{e}ric Math\'{e}us and Bertrand Deroin for their precious help during all these years.

\section{Projectives structures.}\label{sectionsp}

This part gives basic concepts about complex projective structures which will be useful in the sequel. For further insights on this notion, we refer the reader to the survey of D. Dumas \cite{Du}.

\begin{df}
Let $\Sigma$ be a Riemann surface. A \emph{branched projective structure} on $\Sigma$ is a maximal atlas $(\phi_i:U_i\rightarrow \mathbb{C}\mathbb{P}^1)$ where the $U_i$ are open sets in $\Sigma$ and the $\phi_i$ are non constant holomorphic maps on $U_i$ such that on the intersection of two domains $u_i\cap U_j$, the relation $\phi_i=\gamma_{ij}\circ\phi_j$ holds for some M\"{o}bius transformation (i.e. for some element of $PSL(2,\mathbb{C})$).
\end{df}

\begin{rem}
We use the term branched because the charts may have critical points. When $\mathcal{D}$ does not have such branching points, the structure is simply called projective structure. 
\end{rem}

Let $\phi_i:U_i\rightarrow V_i$ be a chart of such an atlas. If $U_j$ is an other chart such that $U_i\cap U_j\neq 0$, then the map $\gamma_{ij}\circ\phi_j:U_j\rightarrow \mathbb{C}\mathbb{P}^1$ is equal to $\phi_i$ on $U_i\cap U_j$ and allows to continue $\phi_i$ to $U_j$. Continuing  this way, we obtain a globally defined holomorphic map whose domain of definition is the universal covering space $\tilde{\Sigma}$. This map, denoted by $\mathcal{D}:\tilde{\Sigma}\rightarrow \mathbb{C}\mathbb{P}^1$ is called a \emph{developing map}. $\mathcal{D}$ is defined up to a post-composition by a M\"{o}bius transformation. 

Associated with this, we can define a morphism $\rho:\pi_1(\Sigma)\rightarrow PSL(2,\mathbb{C})$ called \emph{monodromy representation} which satisfies the following equivariance relation:
\begin{eqnarray*}
\forall x \in \tilde{\Sigma},\, \forall \alpha \in \pi_1(\Sigma),\,  \mathcal{D}(\alpha.x)=\rho(\alpha).\mathcal{D}(x)
\end{eqnarray*}
The group $\Gamma:=\rho(\pi_1(\Sigma))$ is called \emph{monodromy group} of the branched projective structure. As the developing map $\mathcal{D}$ is defined up to a post-composition by a M\"{o}bius transformation, $\Gamma$ is defined up to a conjugacy by this transformation.

In this paper, we will consider Riemann surfaces of finite type, i.e. compact Riemann surfaces with a finite number of points deleted. Our theorems \ref{theoprincipal} and \ref{theo2} concerning projective structures both assume that the monodromy representation is non elementary. Theorem \ref{theoprincipal} assumes that the monodromy representation is parabolic and theorem \ref{theo2} assumes that the projective structure is parabolic (which is a stronger condition). Here, we recall the definitions of these notions:

\begin{df}
\begin{itemize}
\item A representation $\rho:\pi_1(\Sigma)\to PSL(2,\mathbb{C})$ is said to be parabolic if the monodomy is parabolic around each puncture (i.e. it is conjugated to the group generated by the transformation $z\mapsto z+1$).
\item A branched projective structure on a Riemann surface of finite type is said to be parabolic if for any puncture $p$, there is a neighborhood $V_p$ of $p$ and a biholomorphism $\phi:D(0,e^{-2\pi})-\{0\}\to V_p$ such that some developing map satisfies $\mathcal{D}\circ\phi(z)=\frac{1}{2i\pi}\log z.$ (in this definition, the developing must be seen as a multivalued holomorphic map from $\Sigma$ to $PSL(2,\mathbb{C})$).
\item A subgroup $\Gamma$ of $PSL(2,\mathbb{C})$ is said to be elementary if there exists a finite set in $\mathbb{C}\mathbb{P}^1$ which is globally invariant by the action of $\Gamma$or if it is conjugate to a subgroup of the projective special unitary group $PSU(2,\mathbb{C})$. A representation $\rho:\pi_1(\Sigma)\to PSL(2,\mathbb{C})$ is said to be elementary if $\rho(\pi_1(\Sigma))$ is elementary.
\end{itemize}
\end{df}

\begin{rem}\label{rem1}
\begin{itemize}
\item If the monodromy representation is non elementary, then the Riemann surface is necessarely hyperbolic. Indeed, The monodromy group of a branched projective structure on the sphere is trivial (since $\pi_1(\mathbb{C}\mathbb{P}^1)$ is trivial) and the monodromy group of a branched projective structure on a parabolic Riemann surface $\Sigma$ is abelian (since $\pi_1(\Sigma)$ is). 
\item At the universal covering level, the parabolicity of a projective structure at a puncture $p$ implies that the developing map in any connected component of the preimage of $V_p$ is holomorphically conjugated to the inclusion map. More precisely, consider the universal covering map $q:\mathbb{H}_{\geq 1}=\{Im z\geq 1\}\to D(0,e^{-2\pi})-\{0\}$, $\tau\mapsto e^{2i\pi\tau}$. Let $\mathcal{H}_p$ be a connected component of $proj^{-1}(V_p)$ where $proj:\tilde{\Sigma}\to\Sigma$ is the universal covering map. If $\phi:D(0,e^{-2\pi})-\{0\}\to V_p$ satisfies $\mathcal{D}\circ\phi(z)=\frac{1}{2i\pi}\log z$, then lifting $\phi$ by $q$ and $proj$, one gets a biholomorphism $\tilde{\phi}:\mathbb{H}_{\geq 1}\to\mathcal{H}_p$ satisfying $\mathcal{D}\circ\tilde{\phi}(\tau)=\tau$. Moreover, it is proved in \cite{AH} that $\tilde{\phi}$ is in fact bilipschitz for the hyperbolic metrics.
\item If $\mathcal{D}(z)=\frac{1}{2i\pi}\log z$ in a coordinate $z$ around a puncture, then $\mathcal{D}(e^{2i\pi}z)=\mathcal{D}(z)+1$. So the parabolicity of the projective structure implies the parabolicity of the monodromy representation. But the converse is false in general. Indeed, on the puncture disc, the projective structure given by $\mathcal{D}_n(z)=\frac{1}{2i\pi} \log z+\frac{1}{z^n}$ has a parabolic monodromy representation ($\mathcal{D}_n(e^{2i\pi}z)=\mathcal{D}_n(z)+1$) but it is not parabolic for $n\in \mathbb{N}^*$ (to see this, one can check for example that $\mathcal{D}_n(z)$ does not have limit when $z$ goes to $0$).
\end{itemize}
\end{rem}

\begin{ex}\label{exstructproj}
\begin{enumerate}
\item Let $\Sigma$ be a hyperbolic Riemann surface. The universal covering space of $\Sigma$ is the upper half plane $\mathbb{H}$ and $\Sigma=\mathbb{H}/\Gamma$ where $\Gamma$ is a subgroup of $PSL(2,\mathbb{R})$ whose action on $\mathbb{H}$ is free and properly discontinuous.
The couple $(\mathcal{D},\rho)=(i:\mathbb{H}\hookrightarrow \mathbb{C}\mathbb{P}^1,\,i:\Gamma\hookrightarrow PSl(2,\mathbb{C}))$ (where $i$ is the inclusion map) defines a projective structure on $\Sigma$ called the \emph{uniformizing projective structure} of $\Sigma$.
\item Let $\Gamma$ be a Kleinian group (i.e. a discrete subgroup of $PSL(2,\mathbb{C})$) such that the set of discontinuity $\Omega(\Gamma)\in \mathbb{C}\mathbb{P}^1$ is not vacuous. The quotient $\Omega(\Gamma)/\Gamma$ is a Riemann surface which can be endowed with a projective structure in the following way: we cover $\Omega(\Gamma)/\Gamma$ by open sets $U_i$ small enough and we choose local inverses $s_i$  of the projection $p:\Omega(\Gamma)\rightarrow\Omega(\Gamma)/\Gamma$ defined on $U_i$. The $s_i:U_i\rightarrow \Omega(\Gamma)\subset \mathbb{C}\mathbb{P}^1$ define an atlas of $\Sigma$ whose transition functions are elements of $\Gamma$ (i.e. M\"{o}bius transformations). Note that by Ahlfors' finitness theorem \cite{Ah}, the Riemann surface $\Omega(\Gamma)/\Gamma$ is of finite type and the projective structure is parabolic.
\item In the two previous examples, the developing map is not onto. Starting with the uniformizing projective structure of $\Sigma$ as in example $1.$, there is a surgery operation introduced by Heijal \cite{He} and called \emph{grafting}, that produces new projective structures having the same monodromy representation but such that the new developing map is not onto.
\end{enumerate}
\end{ex}

\section{Analytic continuation.} \label{sectionDeroin}

Recall that one of the goals of this paper is to show that, with some good assumptions on the projective structure, any local inverse $h$ of the developing map can be analytically continued along a generic Brownian path. In this part, following the paper \cite{CDFG}, we show that, however, there are many paths along which $h$ cannot be analytically continued. Let us start with some basic definitions about analytic continuation of holomorphic maps.

Let $C_0$ and $C_1$ be two Riemann surfaces and a germ of holomorphic map $h:(C_0,p_0)\rightarrow(C_1,p_1)$. Let $\tau:[0,t]\rightarrow C_0$ be a continuous path such that $\tau(0)=p_0$. We say that $\tau$ is covered by the sequence of open discs $D_1,\cdots,D_n$ if there is a sequence of times $0=t_0<t_1<\cdots<t_n=t$ such that $\tau([t_k,t_{k+1}])\subset D_{k+1}$. We say that $h$ can be \emph{analytically continued} along $\tau([0,t])$ if there is a sequence of discs $D_1,\cdots,D_n$ covering $\tau$ and  holomorphic maps $f_k:D_k\rightarrow C_1$ such that the germ of $f_1$ in $p_0$ is $h$ and such that for all $k \in \{1,\cdots,n-1\}$, we have $f_k=f_{k+1}$ on $D_k \cap D_{k+1}$.

\begin{df}
A point $q \in C_0$ is called a singularity for $h$ if there is a continuous path $\tau :[0,1]\rightarrow C_0$ such that
\begin{enumerate}
\item $\tau(0)=p_0$ and $\tau(1)=q$.
\item $\forall \epsilon >0$, $h$ can be analytically continued along $\tau([0,1-\epsilon])$.
\item $h$ cannot be analytically continued along $\tau([0,1])$.
\end{enumerate}
\end{df}
The set of singularities could be, in principle, any subset of $C_0$. If it is the whole $C_0$, we say that $h$ has \emph{full singular set}.

There may also exist an open set $D\subset C_0$ containing $p_0$ such that for any path $\tau:[0,1]\rightarrow C_0$ with $\tau(0)=p_0$, $\tau(1)\in \partial D$ and $\tau([0,1[)\subset D$, $h$ can be analytically continued along $\tau([0,1-\epsilon])$ but not along $\tau([0,1])$. In the case where $\partial D$ is a topological disc, we say that $h$ has a \emph{natural boundary} for analytic continuation.

\begin{pro}\cite{CDFG}\label{proCDFG}
Let $\Sigma$ be a hyperbolic Riemann surface endowed with a branched projective structure. Let $\mathcal{D}$ be a developing map and $h$ be a germ of $\mathcal{D}^{-1}$.
\begin{enumerate}
\item If the projective structure is the one given by uniformization, then $h$  has a natural boundary for analytic continuation.
\item If the monodromy group is dense in $PSL(2,\mathbb{C})$, then $h$ has full singular set.
\end{enumerate}
\end{pro}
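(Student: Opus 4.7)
Here $\tilde{\Sigma} = \mathbb{H}$ and $\mathcal{D}$ is the inclusion $\mathbb{H}\hookrightarrow\mathbb{C}\mathbb{P}^1$, so any germ $h$ of $\mathcal{D}^{-1}$ at $z_0 \in \mathbb{H}$ is the identity on a small disc about $z_0$ viewed in $\tilde{\Sigma}$. By the identity principle, any analytic continuation of $h$ along a path in $\mathbb{C}\mathbb{P}^1$ remains the identity on its domain of definition; hence $h$ extends holomorphically to all of $\mathbb{H}$ but cannot cross any point of $\partial\mathbb{H} = \mathbb{R}\cup\{\infty\}$, since an extension to a neighborhood $U$ of a boundary point would be a holomorphic map $U\to\tilde{\Sigma}=\mathbb{H}$ restricting to the identity on $U\cap\mathbb{H}$, which is absurd. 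Thus the natural boundary is $\partial\mathbb{H}$, a topological circle in $\mathbb{C}\mathbb{P}^1$ bounding the disc $\mathbb{H}$.

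\textbf{Part (2): dense monodromy.} Fix $z \in \mathbb{C}\mathbb{P}^1$; the goal is a continuous path $\tau:[0,1]\to\mathbb{C}\mathbb{P}^1$ from $z_0$ to $z$ along which $h$ extends on $\tau([0,1-\varepsilon])$ for every $\varepsilon>0$ but not on $\tau([0,1])$. The guiding criterion is the following: $h$ extends to a neighborhood of $\tau(1)$ exactly when the lift $t\mapsto h(\tau(t))$ stays inside a compact subset of $\tilde{\Sigma}$ as $t\to 1$. Hence it is enough to arrange that this lift escapes every compact set of $\tilde{\Sigma}$.

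I would first observe that $\mathcal{D}$ is onto: the image $\mathcal{D}(\tilde{\Sigma})$ is a non-empty $\Gamma$-invariant open subset of $\mathbb{C}\mathbb{P}^1$ by $\rho$-equivariance, and since $\Gamma$ is dense every $\Gamma$-orbit is dense in $\mathbb{C}\mathbb{P}^1$, so the closed complement must be empty. Using density of $\Gamma$ once more, I would then select a sequence $\alpha_n \in \pi_1(\Sigma)$ such that $\rho(\alpha_n)\cdot z_0 \to z$ in $\mathbb{C}\mathbb{P}^1$ while simultaneously $\alpha_n\cdot x_0$ leaves every compact subset of $\tilde{\Sigma}$. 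This is possible because the $\rho$-preimage of any open neighborhood in $PSL(2,\mathbb{C})$ contains infinitely many elements of $\pi_1(\Sigma)$ (density provides infinitely many elements of $\Gamma$ in any open set), while $\pi_1(\Sigma)$ acts freely and properly discontinuously on $\tilde{\Sigma}$, so all but finitely many of these elements move $x_0$ out of a prescribed compact set.

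Finally I would construct $\tau$ as a concatenation of short arcs $c_n \subset \mathbb{C}\mathbb{P}^1$ joining $\rho(\alpha_{n-1})\cdot z_0$ to $\rho(\alpha_n)\cdot z_0$ (with $\alpha_0 = 1$), reparametrized on $[1-2^{-n+1}, 1-2^{-n}]$ so that $\tau(1) = z$. By the equivariance $\mathcal{D}(\alpha\cdot x) = \rho(\alpha)\cdot \mathcal{D}(x)$ each $c_n$ lifts to a path in $\tilde{\Sigma}$ from $\alpha_{n-1}\cdot x_0$ to $\alpha_n\cdot x_0$; gluing these lifts produces the analytic continuation of $h$ along $\tau|_{[0,1)}$, and by construction this continuation escapes to infinity in $\tilde{\Sigma}$, blocking extension at $z$. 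The main obstacle is to balance the two competing requirements on the $\alpha_n$ (convergence $\rho(\alpha_n)\cdot z_0\to z$ versus escape of $\alpha_n\cdot x_0$) together with the control of the lengths of the connecting arcs $c_n$ so that $\tau$ genuinely converges to $z$; both are made possible by the abundance of elements of $\Gamma$ in any open subset of $PSL(2,\mathbb{C})$ that density provides.
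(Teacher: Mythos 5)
Your part (1) is essentially the paper's argument (the paper only states it; yours fleshes out the elementary verification). Part (2), however, has a genuine gap precisely at the ``lifting'' step.

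You claim that each short arc $c_n\subset\mathbb{C}\mathbb{P}^1$ joining $\rho(\alpha_{n-1})\cdot z_0$ to $\rho(\alpha_n)\cdot z_0$ ``lifts to a path in $\tilde{\Sigma}$ from $\alpha_{n-1}\cdot x_0$ to $\alpha_n\cdot x_0$.'' This is not available to you. The continuation of $h$ along $\tau$ is the unique lift determined by $\mathcal{D}$ and the starting point of the previous stage; you do not get to prescribe its endpoint. Concretely, the lift of $c_n$ starting at $\alpha_{n-1}\cdot x_0$ ends at $h_{n-1}(\rho(\alpha_n)\cdot z_0)$, where $h_{n-1}$ is the branch of $\mathcal{D}^{-1}$ near $\alpha_{n-1}\cdot x_0$; by $\rho$-equivariance this is $\alpha_{n-1}\cdot h_0\bigl(\rho(\alpha_{n-1}^{-1}\alpha_n)\cdot z_0\bigr)$, and there is no reason for $h_0\bigl(\rho(\alpha_{n-1}^{-1}\alpha_n)\cdot z_0\bigr)$ to equal $\alpha_{n-1}^{-1}\alpha_n\cdot x_0$ — the latter is typically outside the domain of the germ $h_0$ altogether. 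Indeed, you simultaneously want $\alpha_n\cdot x_0$ to escape every compact set of $\tilde{\Sigma}$ while their images under $\mathcal{D}$ converge; this forces consecutive $\alpha_n\cdot x_0$ to be far apart in $\tilde{\Sigma}$ even when the $c_n$ are very short in $\mathbb{C}\mathbb{P}^1$, so the short-arc lift cannot reach $\alpha_n\cdot x_0$.

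The paper's proof (following \cite{CDFG}) resolves exactly this difficulty by going in the opposite direction: it builds paths $c_n$ directly in $\tilde{\Sigma}$ from $p_0$ to $\alpha_n(p_0)$, of uniformly bounded hyperbolic length, and then the path in $\mathbb{C}\mathbb{P}^1$ is the push-forward $\mathcal{D}\circ c$ — so the ``lift'' is tautologically the path one started with. The price is that one must then guarantee $\mathcal{D}\circ c$ actually converges in $\mathbb{C}\mathbb{P}^1$, and this is where the quantitative lemma enters: the increments $\alpha_n$ are chosen from a finite set, and the balls $B_n$ on which $\rho(A_n)$ fails to contract shrink exponentially, with $\mathcal{D}\circ c_n$ arranged to avoid $B_{n-1}$. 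Then $\mathcal{D}\circ a_n=\rho(A_{n-1})\circ\mathcal{D}\circ c_n$ has exponentially decaying length, forcing convergence to $z$. Your proposal replaces this contraction estimate with the qualitative observation that density gives infinitely many group elements near any target; that observation is true but does not control the lengths of the image arcs, nor fix the lifting problem. To salvage your argument you would need to (i) build the arcs in $\tilde{\Sigma}$ rather than in $\mathbb{C}\mathbb{P}^1$, and (ii) import a contraction lemma of the type the paper cites — at which point you recover the paper's proof.

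One further minor point: your criterion ``$h$ extends to a neighborhood of $\tau(1)$ exactly when the lift stays inside a compact subset'' is only correct in the direction you use (escape of the lift obstructs extension); the converse direction fails at branch points of $\mathcal{D}$, where the lift can accumulate in a compact set without $h$ extending. This does not affect your argument since you only use the safe direction, but the phrasing ``exactly when'' overstates it.
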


\begin{proof}
For a complete proof, see \cite{CDFG}. We give here some ideas of the proof because we think that the proof could be instructive for the comprehension of the proof theorem \ref{theoprincipal}.

\begin{enumerate}

\item In this case, the developing map is the inclusion $i:\mathbb{H}\hookrightarrow \mathbb{C}\mathbb{P}^1$. Then $\partial\mathbb{H}\subset \mathbb{C}\mathbb{P}^1$ is a natural boundary for analytic continuation of $h$.
 
\item Let $h$ be a germ of $\mathcal{D}^{-1}$ at $z_0\in\mathbb{C}\mathbb{P}^1$ and $p_0=h(z_0)$. The proof is based on the following lemma:
\begin{lem}\cite{CDFG}
For all $z \in \mathbb{C}\mathbb{P}^1$, there is a finite set $\mathcal{A}\subset\pi_1(\Sigma)$ and an infinite sequence $(\alpha_n)_{n\in \mathbb{N}^*}$ of elements of $\mathcal{A}$ which has the following properties: denoting $A_n=\alpha_1\alpha_2\cdots\alpha_n$ and $A_0=id$,
\begin{enumerate}
\item  the diameter of the ball 
\[B_n=\left\{w\in \mathbb{C}\mathbb{P}^1 \text{ such that } |(\rho(A_n))'(w)|\geq\frac{1}{2^n}\right\}\]
converges to $0$ exponentially fast when $n$ tends to infinity. 
\item For all $n\in \mathbb{N}$, $\rho(A_n)(\mathbb{C}\mathbb{P}^1-B_n)\subset D(z,\frac{cst}{2^n})$
\item For all $n\in \mathbb{N}$, neither $z_0$ nor $\rho(\alpha_{n})(z_0)$ belong to $B_{n-1}$
\end{enumerate}
\end{lem}
In this lemma (whose proof can be found in \cite{CDFG}), $\mathbb{C}\mathbb{P}^1$ is endowed with the standard spherical metric. In any of the two charts this metric is written: $|ds|=\frac{|dz|}{1+|z|^2}$. If $\gamma$ is a M\"{o}bius transformation, $\gamma'$ is the derivative of $\gamma$ and $|\gamma'(z)|$ is the spherical norm in $z$. If $z\in \mathbb{C}\mathbb{P}^1$ and $\alpha \in \mathbb{R}$, $D(z,\alpha)$ is the spherical disc of radius $\alpha$ centered at $z$. Let us prove that the previous lemma implies the proposition: with properties (a) and (c) of the previous lemma, one can construct for all $n\in \mathbb{N}$, a $C^{\infty}$ path $c_n:[0,1]\rightarrow\tilde{\Sigma}$ from $p_0$ to $\alpha_n(p_0)$, whose length is bounded by a constant independent of $n$ and such that for $n$ big enough $\mathcal{D}\circ c_n$ does not meet $B_{n-1}$. Then we define the path $c:[0,\infty[\rightarrow \tilde{\Sigma}$ as the infinite concatenation of paths $a_n:=A_{n-1}c_n$ (from $A_{n-1}(p_0)$ to $A_n(p_0)$). The $\rho$-equivariance gives:
\[\mathcal{D}\circ a_n=\rho(A_{n-1})\circ\mathcal{D}\circ c_n\]
As $\mathcal{D}\circ c_n$ does not meet $B_{n-1}$, we deduce, from property (a) of the previous lemma that the length of the path $\mathcal{D}\circ a_n$ converges exponentially fast to $0$ and so $\mathcal{D}\circ c(t)$ converges, when $t$ goes to infinity, toward a point in $\mathbb{C}\mathbb{P}^1$. Using property (b) of the previous lemma, this point is necessarily $z$ (because $\mathcal{D}\circ a_n\subset D(z,\frac{cst}{2^{n-1}})$). So $z$ is a singularity for analytic continuation of $h$. 
\end{enumerate}

\end{proof}


\section{Random walks.}\label{sectionrw}

In this section, after explaining some basic facts about random walks and stationary measures, we prove proposition \ref{propma} which is the key of the proof of theorems \ref{theoprincipal} and \ref{theo2}.

In this part, $\Gamma$ is a subgroup of $PSL(2,\mathbb{C})$ finitely generated and $\mu$ is a probability measure on $\Gamma$. $supp(\mu)$ is the support of $\mu$ and $<supp(\mu)>$ the group generated by $supp(\mu)$. Denote 
$\Omega=\Gamma^{\mathbb{N}^*}$, $\tau$ the $\sigma$-algebra generated by the cylinder sets in $\Omega$ and
$\mathbb{P}=\mu^{\mathbb{N}^*}$. The coordinate maps $h_i:\Omega\rightarrow\Gamma$ are $\mathbb{P}$-independent and identically distributed with law $\mu$. This part deals with the statistical behaviour of the action on $\mathbb{C}\mathbb{P}^1$ of the right random walk in $\Gamma$ with law $\mu$: $X_n(\omega)=h_1(\omega)\cdots h_n(\omega)$. 

The action of $\Gamma$ on $\mathbb{C}\mathbb{P}^1$ gives an action of $\Gamma$ on the set $\mathcal{P}(\mathbb{C}\mathbb{P}^1)$ of Borel probability measures on $\mathbb{C}\mathbb{P}^1$. If $\gamma\in\Gamma$, $\nu \in \mathcal{P}(\mathbb{C}\mathbb{P}^1)$ and $A$ is a Borel set in $\mathbb{C}\mathbb{P}^1$, this action is defined by: $\gamma\cdot\nu(A)=\nu(\gamma^{-1}(A))$.

We also define $\mu^{*n}:=\mu*\mu*\cdots\ast\mu$. The measure $\mu^{*n}$ on $\Gamma$ is the push forward of the product measure $\mu^{\otimes n}$ on $\Gamma^n$ by the map $\Gamma\times\cdots\times\Gamma\rightarrow\Gamma$,
 $(\gamma_1,\cdots,\gamma_n)\mapsto\gamma_1\cdots\gamma_n$. The law of $X_n$ is $\mu^{*n}$. If $\nu\in \mathcal{P}(\mathbb{C}\mathbb{P}^1)$, we also define the measure $\mu\ast\nu$ as the push forward on $\mathbb{C}\mathbb{P}^1$ of the product measure on $\Gamma\times\mathbb{C}\mathbb{P}^1$ by the map $\Gamma\times\mathbb{C}\mathbb{P}^1\rightarrow\mathbb{C}\mathbb{P}^1$, $(\gamma,x)\mapsto\gamma\cdot x$. So, if $A$ is a borel set in $\mathbb{C}\mathbb{P}^1$, we have:
\[\mu\ast\nu(A)=\sum_{\gamma\in\Gamma}\mu(\gamma)\nu(\gamma^{-1}(A))\]
and if f is a continuous function on $\mathbb{C}\mathbb{P}^1$:
\[\mu\ast\nu(f)=\sum_{\gamma\in\Gamma}\mu(\gamma)\int_{x\in\mathbb{C}\mathbb{P}^1}f(\gamma x)d\nu(x)\]

\begin{df}
The measure $\nu\in\mathcal{P}(\mathbb{C}\mathbb{P}^1)$ is said to be $\mu$-stationary if $\mu\ast\nu=\nu$, which means that for any Borel set $A$ in $\mathbb{C}\mathbb{P}^1$, we have: \[\sum_{\gamma\in\Gamma}\mu(\gamma)\nu(\gamma^{-1}(A))=\nu(A)\]
\end{df}

The following results are classical:

\begin{teo}\label{theoma}[Furstenberg]
\begin{enumerate}
\item There always exists a $\mu$-stationary measure on $\mathbb{C}\mathbb{P}^1$ \cite{Fur}.
\item Let $\nu$ be a $\mu$-stationary measure on $\mathbb{C}\mathbb{P}^1$. Then, for almost every $\omega \in \Omega$, there is a measure $\lambda(\omega) \in \mathcal{P}(\mathbb{C}\mathbb{P}^1)$ such that the sequence of probability measures $X_n(\omega)\cdot\nu$ converges weakly toward $\lambda(\omega)$ \cite{Fur2}.
\item If $<supp(\mu)>$ is not an elementary group. Then, for almost every  $\omega\in\Omega$, there is $z(\omega)\in \mathbb{C}\mathbb{P}^1$ such that $\lambda(\omega)=\delta_{z(\omega)}$ (Dirac in $z(\omega)$) \cite{Fur2}.
\item If $<supp(\mu)>$ is not an elementary group, then a $\mu$-stationary measure on $\mathbb{C}\mathbb{P}^1$ is non-atomic \cite{Wo}.
\end{enumerate}
\end{teo}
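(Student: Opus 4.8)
The statement collected here as Theorem \ref{theoma} is a compendium of classical results, so the plan is to indicate, for each of the four items, the standard argument rather than to reconstruct the full theory. For item $(1)$, the plan is to use compactness: $\mathcal{P}(\mathbb{C}\mathbb{P}^1)$ is weak-$*$ compact and convex, and the map $\nu\mapsto \mu*\nu$ is continuous and affine on it. Averaging along Ces\`{a}ro means, i.e.\ taking a weak-$*$ limit point of $\frac{1}{N}\sum_{n=0}^{N-1}\mu^{*n}*\nu_0$ for any initial $\nu_0$, produces a fixed point; alternatively one invokes the Schauder--Tychonoff fixed point theorem directly. This gives a $\mu$-stationary $\nu$.

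For item $(2)$, the key observation is that stationarity of $\nu$ says exactly that the sequence of measure-valued random variables $M_n(\omega):=X_n(\omega)\cdot\nu$ is a martingale with respect to the filtration generated by $h_1,\dots,h_n$: indeed $\mathbb{E}\big(M_{n+1}\,\big|\,h_1,\dots,h_n\big)=X_n(\omega)\cdot(\mu*\nu)=X_n(\omega)\cdot\nu=M_n(\omega)$. Testing against a countable dense family of continuous functions on $\mathbb{C}\mathbb{P}^1$ and applying the bounded martingale convergence theorem to each of the resulting real-valued bounded martingales $\langle M_n,f\rangle$, one gets almost sure convergence of $\langle M_n,f\rangle$ for all $f$ in the dense family, hence weak-$*$ convergence of $M_n(\omega)$ to some random probability measure $\lambda(\omega)$. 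This is the Furstenberg martingale argument and requires no irreducibility hypothesis.

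For item $(3)$, the plan is the standard contraction/proximality argument. Write $\lambda(\omega)=\lim X_n(\omega)\cdot\nu$; one first shows, using the cocycle relation $X_{n+k}(\omega)=X_n(\omega)\,(h_{n+1}\cdots h_{n+k})$ and the independence of the increments, together with the fact that $\nu$ is non-atomic — which itself follows once $<supp(\mu)>$ is non-elementary, cf.\ item $(4)$ — that the limit measure $\lambda(\omega)$ is almost surely a Dirac mass. The cleanest route is: if with positive probability $\lambda(\omega)$ were not a Dirac, then by stationarity the mass of $\lambda(\omega)$ would spread over at least two points, and one derives that the Furstenberg boundary map is not proximal, contradicting non-elementarity of the group acting on $\mathbb{C}\mathbb{P}^1$ (which forces the existence of proximal — loxodromic — elements and, via a ping-pong / positivity argument, strong irreducibility and proximality of the $\mu$-action). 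Then $\lambda(\omega)=\delta_{z(\omega)}$ for a measurable map $\omega\mapsto z(\omega)$. Item $(4)$, non-atomicity of any $\mu$-stationary $\nu$, is proved by contradiction: if $\nu$ had an atom, let $m$ be the maximal atomic mass and $E$ the (finite, by $\nu(\mathbb{C}\mathbb{P}^1)=1$) set of points realizing it; stationarity $\nu=\mu*\nu$ forces $E$ to be $supp(\mu)$-invariant, hence $<supp(\mu)>$-invariant and finite, contradicting non-elementarity. The main obstacle, and the only genuinely nontrivial input, is the proximality statement feeding item $(3)$ — establishing that a non-elementary subgroup of $PSL(2,\mathbb{C})$ acts proximally and strongly irreducibly on $\mathbb{C}\mathbb{P}^1$ so that Furstenberg's criterion applies; everything else is soft measure theory and martingale convergence. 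Since all four items are quoted from \cite{Fur}, \cite{Fur2}, \cite{Wo}, the paper will simply cite them, and the sketch above is what one would supply if a self-contained argument were wanted.
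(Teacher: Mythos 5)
The paper itself offers no proof of this theorem: all four items are quoted directly from \cite{Fur}, \cite{Fur2} and \cite{Wo}, so there is nothing in the source to compare your argument against line by line. Your sketch is nonetheless an accurate outline of the standard route. Items $(1)$ and $(2)$ are exactly the Krylov--Bogolyubov/Markov--Kakutani fixed-point argument and the Furstenberg martingale argument, and item $(4)$ is the usual finite-invariant-set contradiction; all three are clean.

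The one place where you are a little loose is item $(3)$, and you correctly flag it as ``the only genuinely nontrivial input.'' What is actually needed is the implication: non-elementary subgroup of $PSL(2,\mathbb{C}) \Rightarrow$ the action on $\mathbb{C}\mathbb{P}^1$ is strongly irreducible (no finite invariant set, which is half the definition of non-elementary) and contains a proximal element (a loxodromic, which exists in every non-elementary subgroup). Once these two properties are in hand, Furstenberg's criterion gives that the martingale limit is almost surely a Dirac. Your phrasing ``one derives that the Furstenberg boundary map is not proximal, contradicting non-elementarity'' compresses this, but the logic is right. It is also worth noting that the paper, a few lines later, derives a quantitative version of item $(3)$ by a different route: Proposition~\ref{propma} uses the positivity of the Lyapunov exponent (Theorem~\ref{Lyapunov}) together with the Cartan decomposition of $X_n$ to produce contracting and repelling discs, and Remark~\ref{remarque} then shows $z_n(\omega)\to z(\omega)$ with $X_n(\omega)\cdot\nu\to\delta_{z(\omega)}$. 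That Lyapunov/Oseledets route and your proximality route are the two standard ways to obtain item $(3)$; the paper in effect reproves the statement via the former because it needs the quantitative contraction estimates downstream, whereas the softer proximality argument you sketch would not deliver the exponential rates used in Lemma~\ref{lemtechnique} and Proposition~\ref{pro3}.
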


\paragraph{The Lyapunov exponent. } The positivity of the Lyapunov exponent is a central result in the theory of random walks and is one of the key points of the proof of theorems \ref{theoprincipal} and \ref{theo2}.
\begin{teo}\label{Lyapunov}[Furstenberg]
If:
\begin{enumerate}
\item $\displaystyle{\int_\Gamma} \log||\gamma||\mathrm{d}\mu(\gamma) < +\infty$
\item $<supp(\mu)>$ is not an elementary group.
\end{enumerate}
Then, there exists $\lambda>0$ such that $\mathbb{P}$-almost surely, we have:
$$\frac{1}{n} \log || X_n || \longrightarrow \lambda.$$
\end{teo}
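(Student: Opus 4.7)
The plan is to combine Kingman's subadditive ergodic theorem with a Furstenberg-type integral formula for $\lambda$ and a rigidity argument that rules out $\lambda=0$ under the non-elementary hypothesis. Throughout I lift the random walk from $PSL(2,\mathbb{C})$ to $SL(2,\mathbb{C})$ so that the operator norm $\|\cdot\|$ is well-defined.

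First I would establish existence of the limit. The sequence $a_n(\omega)=\log\|X_n(\omega)\|$ is subadditive over the Bernoulli shift $\sigma$ on $\Omega$ because the operator norm is submultiplicative: $a_{n+m}(\omega)\leq a_n(\omega)+a_m(\sigma^n\omega)$. Hypothesis (1) gives integrability of $a_1$. Kingman's subadditive ergodic theorem, combined with ergodicity of the Bernoulli shift $(\Omega,\tau,\mathbb{P},\sigma)$, then yields $\mathbb{P}$-a.s.\ convergence of $\tfrac{1}{n}\log\|X_n\|$ to the deterministic constant
$$\lambda=\inf_n\tfrac{1}{n}\,\mathbb{E}[\log\|X_n\|]\geq 0.$$

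Next I would derive Furstenberg's integral formula. Pick a $\mu$-stationary measure $\nu$ on $\mathbb{C}\mathbb{P}^1$ supplied by part (1) of Theorem \ref{theoma}, and identify $\mathbb{C}\mathbb{P}^1$ with the space of complex lines in $\mathbb{C}^2$. The norm cocycle
$$\sigma(\gamma,[v])=\log\frac{\|\gamma v\|}{\|v\|}$$
satisfies the cocycle identity $\sigma(g_1g_2,[v])=\sigma(g_1,g_2\cdot[v])+\sigma(g_2,[v])$. Iterating this along $X_n=h_1\cdots h_n$ and integrating against $\mu^{\otimes n}\otimes\nu$ (using $\mu$-stationarity of $\nu$ at each step) produces
$$\int\sigma(X_n,[v])\,d(\mu^{\otimes n}\otimes\nu)=n\int_{\Gamma}\int_{\mathbb{C}\mathbb{P}^1}\sigma(\gamma,[v])\,d\nu([v])\,d\mu(\gamma).$$
A comparison with $\log\|X_n\|\geq\log(\|X_n v\|/\|v\|)$ together with a standard regularization argument (using the fact that $\|X_n\|$ is attained on some unit vector whose direction concentrates along a Furstenberg boundary limit) upgrades this to the exact Furstenberg formula $\lambda=\int\!\int\sigma\,d\mu\,d\nu$.

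The main obstacle, and the heart of Furstenberg's theorem, is showing that $\lambda>0$. I would argue by contradiction: assume $\lambda=0$. Then the integral formula forces the averaged cocycle to vanish, and pairing it with the dual identity for inverses together with a symmetrization on $\nu$ (exploiting that $\sigma(\gamma,[v])+\sigma(\gamma^{-1},\gamma\cdot[v])=0$) lets one upgrade $\mu$-stationarity of $\nu$ to actual $\mu$-a.e.\ invariance: $\gamma_*\nu=\nu$ for $\mu$-a.e.\ $\gamma$, hence for every $\gamma\in\Gamma=\langle\operatorname{supp}(\mu)\rangle$. The endgame is then to rule out the existence of a $\Gamma$-invariant probability measure on $\mathbb{C}\mathbb{P}^1$. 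Part (4) of Theorem \ref{theoma} forbids $\nu$ from having atoms. On the other hand non-elementarity of $\Gamma$ provides loxodromic elements $\gamma$ with North--South dynamics; iterating $\gamma^n$ pushes any non-atomic measure away from the repelling fixed point and concentrates its mass on the attracting fixed point, contradicting $\Gamma$-invariance. This is the step I expect to be delicate, as it requires juggling the definition of elementary (invariant finite set or conjugation into $PSU(2,\mathbb{C})$) against the measurable rigidity above; it is precisely where the hypothesis that $\langle\operatorname{supp}(\mu)\rangle$ is non-elementary is used in full.
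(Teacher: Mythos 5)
The paper does not actually prove Theorem~\ref{Lyapunov}: it states the result, cites \cite[Theorem~8.6]{Fur} and \cite{BLa}, and gives only the one-line remark that the a.s.\ convergence of $\tfrac1n\log\|X_n\|$ to some $\lambda\in[0,\infty)$ follows from Kingman's subadditive ergodic theorem and hypothesis~(1), while $\lambda>0$ requires hypothesis~(2) and is Furstenberg's theorem. Your Kingman step matches the paper's own indication and is correct, as is the lift to $SL(2,\mathbb{C})$.

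The real issue is the positivity argument. Your endgame (a non-elementary subgroup of $PSL(2,\mathbb{C})$ contains a loxodromic element, and North--South dynamics then forbids a non-atomic invariant measure) is sound. The Furstenberg-formula derivation is at best heuristic: the ``regularization argument'' you invoke to pass from $\log\|X_n\|\ge\sigma(X_n,[v])$ to the exact identity $\lambda=\int\!\int\sigma\,d\mu\,d\nu$ typically relies on the contraction property of $X_n$, which is proved \emph{using} $\lambda>0$ --- so one must be careful to avoid circularity, though that can be repaired.

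The genuine gap is the central step: deducing from $\lambda=0$ that $\gamma_*\nu=\nu$ for $\mu$-a.e.\ $\gamma$. The vanishing of the average $\int\!\int\sigma\,d\mu\,d\nu=0$ together with the pointwise cocycle relation $\sigma(\gamma,[v])+\sigma(\gamma^{-1},\gamma\cdot[v])=0$ does \emph{not} force invariance: integrating that identity against $\mu\otimes\nu$ just reproduces the integral formula and says nothing about $\gamma_*\nu$ versus $\nu$ for individual $\gamma$. The implication ``$\lambda=0\Rightarrow$ there is a $\Gamma$-invariant probability on $\mathbb{C}\mathbb{P}^1$'' is in fact the hard core of Furstenberg's theorem; the known proofs go through either a martingale/contraction analysis (showing subexponential growth of $\|X_n\|$ is incompatible with $X_n\cdot\nu\to\delta_{z(\omega)}$ as in Theorem~\ref{theoma}.3) or a Furstenberg-entropy / Jensen-strict-convexity argument, neither of which is what your symmetrization supplies. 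As written, the proposal silently assumes the conclusion at this point.
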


$\lambda$ is called the Lyapunov exponent of the random walk. The fact that $\frac{1}{n} \log || X_n ||$ converges almost surely to $\lambda\in[0,\infty[$ is a direct consequence of Kingman's subadditive ergodic theorem and requires the first hypothesis of the theorem ($\displaystyle\int_\Gamma \log||\gamma|| \mathrm{d}\mu(\gamma) < +\infty$). The fact that $\lambda>0$ requires the second hypothesis and was first proved by Furstenberg \cite[Theorem 8.6]{Fur} (see also \cite{BLa}).

\paragraph{A corollary of the positivity of the Lyapunov exponent. } 
In this paragraph, we prove a direct corollary of the positivity of the Lyapunov exponent which is well known from the specialists but not so easy to locate in the literature. First, we fix some notations: if $X=\begin{pmatrix}x_1\\x_2\end{pmatrix} \in \mathbb{C}^2-\begin{pmatrix}0\\0\end{pmatrix}$, then $[X]$ is the class of $X$ in $\mathbb{C}\mathbb{P}^1=\mathbb{C}^2-\begin{pmatrix}0\\0\end{pmatrix}/\mathbb{C}^*$. We have the following natural action:
\[\begin{array}{ccccc}
 & PSL(2,\mathbb{C}) \times \mathbb{C}\mathbb{P}^1 & \longrightarrow & \mathbb{C}\mathbb{P}^1 \\
 & \begin{pmatrix}a&b \\c&d\end{pmatrix},\begin{bmatrix}x_1\\x_2\end{bmatrix} & \longmapsto & \begin{bmatrix}ax_1+bx_2\\cx_1+dx_2\end{bmatrix} \\
\end{array}\]
We work with the following distance in $\mathbb{C}\mathbb{P}^1$ : if $X=\begin{pmatrix}x_1\\x_2\end{pmatrix}, Y=\begin{pmatrix}y_1\\y_2\end{pmatrix} \in \mathbb{C}^2-\begin{pmatrix}0\\0\end{pmatrix}$, $d([X],[Y])=\frac{|x_1y_2-y_1x_2|}{\sqrt{|x_1|^2+| x_2 |^2}\sqrt{| y_1 |^2+| y_2 |^2}}$. We denote $D(x,\alpha)$ the closed disc centered in $x$ with radius $\alpha$ and $(D(x,\alpha))^c$ its complementary set. If $g\in PSL(2,\mathbb{C})$, $||g||=\underset{|| X ||=1}{sup}|| gX ||$, where $|| X ||$ is the euclidean norm of the vector $X\in\mathbb{C}^2$. The goal of this part is to prove the following result: 

\begin{pro}\label{propma}
With the following hypothesis:
\begin{enumerate}
\item $\displaystyle{\int_\Gamma} \log||\gamma||\mathrm{d}\mu(\gamma) < +\infty$
\item $<supp(\mu)>$ is not an elementary group.
\end{enumerate}
Then, there are constants $0<\lambda'<\lambda''$ such that for $\mathbb{P}$-almost every $\omega\in\Omega$, there is $N(\omega)$ such that, for all $n>N(\omega)$, there are $y_n(\omega), z_n(\omega) \in \mathbb{C}\mathbb{P}^1$ such that:
\begin{enumerate}
\item $X_n(\omega)((D(y_n(\omega),e^{-\lambda'n}))^c)\subset D(z_n(\omega),e^{-\lambda'n})$
\item $X_n(\omega)(D(y_n(\omega),e^{-2\lambda''n}))\subset (D(z_n(\omega), \frac{1}{2}))^c$
\end{enumerate}
\end{pro}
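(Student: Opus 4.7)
The plan is to combine the singular-value (Cartan $KAK$) decomposition in $SL(2,\mathbb{C})$ with Furstenberg's positivity of the Lyapunov exponent (Theorem \ref{Lyapunov}). Fix constants $0<\lambda'<\lambda<\lambda''$, where $\lambda>0$ is the Lyapunov exponent given by Theorem \ref{Lyapunov}. For each $n$ and each $\omega$, write $X_n(\omega)=U_nD_nV_n$ with $U_n,V_n\in PSU(2,\mathbb{C})$ and $D_n=\mathrm{diag}(\sigma_n,\sigma_n^{-1})$, where $\sigma_n:=\|X_n(\omega)\|\geq 1$. Then set
\[ y_n(\omega):=V_n^{-1}\cdot[0:1]\qquad\text{and}\qquad z_n(\omega):=U_n\cdot[1:0], \]
the \emph{repelling} and \emph{attracting} directions of $X_n$. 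Since $U_n,V_n$ act as isometries of the spherical metric on $\mathbb{C}\mathbb{P}^1$, proving the two inclusions for $X_n$ around $y_n,z_n$ reduces to proving the analogous inclusions for $D_n$ around $[0:1]$ and $[1:0]$.

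For the latter, a direct computation in homogeneous coordinates gives, with $t:=|b|/|a|\in[0,\infty]$,
\[ d([a:b],[0:1])=\frac{1}{\sqrt{1+t^2}},\qquad d(D_n[a:b],[1:0])=\frac{t}{\sqrt{\sigma_n^4+t^2}}. \]
Two elementary consequences follow: (i) if $d([a:b],[0:1])\geq\epsilon$ then $t\leq 1/\epsilon$, hence $d(D_n[a:b],[1:0])\leq t/\sigma_n^2\leq 1/(\epsilon\sigma_n^2)$ (contraction); (ii) if $d([a:b],[0:1])\leq\delta$ with $\delta\leq 1/(2\sigma_n^2)$, then $t\geq\sqrt{4\sigma_n^4-1}\geq\sigma_n^2$, hence $d(D_n[a:b],[1:0])\geq 1/\sqrt{2}>1/2$ (expansion).

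Finally, invoke Theorem \ref{Lyapunov}: almost surely $\tfrac{1}{n}\log\sigma_n\to\lambda$, so there exists $N(\omega)$ with $e^{\lambda'n}\leq\sigma_n\leq\tfrac{1}{\sqrt 2}e^{\lambda''n}$ for every $n>N(\omega)$ (possible because $\lambda'<\lambda<\lambda''$). Plugging $\epsilon=e^{-\lambda'n}$ into (i) gives $X_n(D(y_n,e^{-\lambda'n})^c)\subset D(z_n,e^{\lambda'n}/\sigma_n^2)\subset D(z_n,e^{-\lambda'n})$, which is assertion~1. Plugging $\delta=e^{-2\lambda''n}$ into (ii) requires $e^{-2\lambda''n}\leq 1/(2\sigma_n^2)$, which is exactly the upper bound on $\sigma_n$ above; this yields assertion~2. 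The only non-elementary ingredient is Furstenberg's theorem on the positivity of $\lambda$ (Theorem \ref{Lyapunov}), where the non-elementarity hypothesis on $\langle\mathrm{supp}(\mu)\rangle$ enters; everything else is the two-line computation for $D_n$ above combined with the chosen gap $\lambda'<\lambda<\lambda''$.
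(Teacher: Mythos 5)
Your proposal is correct and follows essentially the same route as the paper: Cartan ($KAK$) decomposition to reduce to the diagonal case, a direct two-line computation with the projective distance for the diagonal part, and Furstenberg's positivity of the Lyapunov exponent (Theorem \ref{Lyapunov}) to pin down $\sigma_n$ between $e^{\lambda'n}$ and (a constant times) $e^{\lambda''n}$. The only cosmetic difference is that you plug $\epsilon=e^{-\lambda'n}$ and $\delta=e^{-2\lambda''n}$ directly into your estimates, whereas the paper first proves the lemma at the thresholds $|\alpha_n|^{-1}$, $|\alpha_n|^{-2}$ and then passes to $e^{-\lambda'n}$, $e^{-2\lambda''n}$ via the inequalities $e^{\lambda'n}\le|\alpha_n|\le e^{\lambda''n}$; both variants are equivalent.
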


\begin{rem}\label{remarque}
\begin{itemize}
\item Almost surely, the sequence $(z_n(\omega))$ defined in the previous proposition \ref{propma} converges to the point $z(\omega)$ defined in theorem \ref{theoma}.3. Indeed, let $\alpha$ be an accumulation point of the sequence $(z_n)$ different from $z$. Let $(n_i)_{i \in\mathbb{N}}$ such that $\lim\limits_{i \to\infty}z_{n_i}=\alpha$. Theorem \ref{theoma} gives $X_{n_i}\cdot\nu(D(\alpha,\frac{d(z,\alpha)}{2})\rightarrow\delta_{z}(D(\alpha,\frac{d(z,\alpha)}{2}))=0$. We deduce from proposition \ref{propma} that $\nu(D(y_{n_i},e^{-\lambda'n_i}))\rightarrow 1$. Extracting a new time, one can suppose that $y_{n_i}\rightarrow y \in \mathbb{C}\mathbb{P}^1$. Then $\nu(\{y\})=1$, which contradicts the fact that $\nu$ is a non-atomic measure.
\item The limit $z(\omega)$ of the sequence $(z_n(\omega))$ has also a dynamical interpretation: it is the projectivization of Oseledets' contracting direction of $X_n(\omega)^{-1}$. More precisely, when applying Oseledets's theorem to our situation (see \cite{Ar} or \cite[Theorem 1.5]{AB}), we get for almost every $\omega$, a one-dimensional vector space $F(\omega)$ in $\mathbb{C}^2$ (which depends measurably of $\omega$) such that:
$$ \lim\limits_{n\to\infty}\frac{1}{n}\log||X_n(\omega)^{-1}\cdot v||=\left\lbrace
\begin{array}{ll}
\lambda & \mbox{if $v\in\mathbb{C}^2-F(\omega)$}\\
-\lambda & \mbox{if $v\in F(\omega)-\{0,0\}$}
\end{array}
\right.$$
The point $z(\omega)\in\mathbb{C}\mathbb{P}^1$ is simply the projectivisation of the vectorial space $F(\omega)\subset\mathbb{C}^2$.
\end{itemize}
\end{rem}

\begin{proof}
From theorem \ref{Lyapunov}, there exists $\lambda>0$ such that $\mathbb{P}$-almost surely, we have:
$$\frac{1}{n} \log || X_n || \longrightarrow \lambda.$$

Fix $\lambda'$ and $\lambda''$ such that $0<\lambda'<\lambda<\lambda''$. 
$\mathbb{P}$-almost surely, we have for $n$ big enough:
\begin{eqnarray}
e^{\lambda'n}\leq || X_n|| \leq e^{\lambda''n} \label{eq3}
\end{eqnarray}
We know that if $g\in PSL(2,\mathbb{C})$, then there are $k, k'$ belonging to the projective special unitary group $PSU(2,\mathbb{C})$ and 
$a= \begin{pmatrix}
a_1&0 \\
0&a_2
\end{pmatrix}$ a diagonal matrix (with $| a_1|\geq| a_2|$) such that $g=kak'$. This is the so called Cartan decomposition.
Applying this decomposition to $X_n$, we obtain: 
$X_n=k_na_nk_n'$ with $k_n\in PSU(2,\mathbb{C})$, 
$a_n= \begin{pmatrix}
\alpha_n&0 \\
0&\alpha_n^{-1}
\end{pmatrix}$ 
and $| \alpha_n|\geq 1$.
As $|| a_n ||=|\alpha_n|$, and $k_n$ is norm-preserving and by the equation \eqref{eq3}, we have for $n$ big enough:
\begin{eqnarray}
e^{\lambda'n}\leq |\alpha_n| \leq e^{\lambda''n}
\end{eqnarray}

\begin{lem}
If $(e_1,e_2)$ is the canonical basis of $\mathbb{C}^2$ and $a= \begin{pmatrix}
\alpha&0 \\
0&\alpha^{-1}
\end{pmatrix}$ with $| \alpha |^2 > \sqrt{\frac{3}{2}}$
we have:
\begin{enumerate}
\item $d([X],[e_2])\geq| \alpha|^{-1}\Rightarrow d([aX],[e_1])\leq  | \alpha|^{-1}$
\item $d([X],[e_2])\leq | \alpha|^{-2}\Rightarrow d([aX],[e_1])\geq \frac{1}{2}$
\end{enumerate}
\end{lem}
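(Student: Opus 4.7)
The statement is purely computational: since $a$ is diagonal, one can reduce everything to an algebraic inequality in the real numbers $|x_1|^2$ and $|x_2|^2$. Writing $X=(x_1,x_2)^T$, a direct substitution into the distance formula given just above the lemma yields
\[d([X],[e_2])^2 = \frac{|x_1|^2}{|x_1|^2+|x_2|^2}, \qquad d([aX],[e_1])^2 = \frac{|x_2|^2}{|\alpha|^4|x_1|^2+|x_2|^2},\]
where for the second equality I used $aX=(\alpha x_1,\alpha^{-1}x_2)^T$ and multiplied numerator and denominator by $|\alpha|^2$. The first step of the proof is simply to record these two identities.

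Setting $r=|x_1|^2$, $s=|x_2|^2$ and $A=|\alpha|^2$, squaring each of the four distance inequalities appearing in the statement and clearing denominators converts them respectively into the equivalent linear relations
\[s\leq (A-1)\,r,\qquad (A-1)\,s\leq A^2\,r,\qquad s\geq (A^2-1)\,r,\qquad 3s\geq A^2\,r.\]
The content of the lemma is therefore the implication ``first $\Rightarrow$ second'' (for part 1) and ``third $\Rightarrow$ fourth'' (for part 2). Both are elementary.

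For part (1), I would feed the hypothesis $s\leq(A-1)r$ into the target $(A-1)s\leq A^2 r$: it suffices to check $(A-1)^2\leq A^2$, i.e.\ $A\geq 1/2$, which is clearly implied by $A>\sqrt{3/2}$. For part (2) — which is the binding case and fixes the constant — the hypothesis gives $3s\geq 3(A^2-1)r$, so the desired inequality $3s\geq A^2 r$ follows as soon as $3(A^2-1)\geq A^2$, equivalently $A^2\geq 3/2$. Recalling $A=|\alpha|^2$, this is exactly the standing assumption $|\alpha|^2>\sqrt{3/2}$.

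There is no real obstacle: the only point requiring minor care is to justify that squaring the distance inequalities is legitimate (both sides are non-negative) and to notice that both sides of each inequality become homogeneous of degree $1$ in the pair $(r,s)$ once denominators are cleared, so the implications reduce to simple monotonicity in $A$. The sharpness of the constant $\sqrt{3/2}$ can be witnessed by taking $s=(A^2-1)r$, which saturates the third inequality and forces the fourth to be saturated at the chosen threshold.
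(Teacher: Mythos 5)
Your proof is correct, and the underlying calculation is the same as the paper's (write out the spherical distance formula, reduce to a one-variable inequality), but the way you organize the reduction is genuinely cleaner. The paper passes to the affine coordinate $z=x_2/x_1$, sets $\beta=|\alpha|^{-2}$, and studies the function $f(x)=\frac{\beta x}{\sqrt{1+\beta^2x^2}}$: it computes $f'>0$, then evaluates $f$ at the endpoints $\sqrt{1/\beta-1}$ and $\sqrt{1/\beta^2-1}$ of the hypothesis regions and checks the resulting algebraic bounds. You instead stay with the homogeneous variables $r=|x_1|^2$, $s=|x_2|^2$ and notice that squaring and clearing denominators turns all four distance inequalities into linear relations in $(r,s)$, so each implication becomes a one-line comparison of slopes: $(A-1)^2\leq A^2$ for part (1) and $3(A^2-1)\geq A^2$ for part (2). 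This sidesteps the monotonicity argument and the endpoint evaluation entirely, and also makes it immediate that $A^2=3/2$ (i.e.\ $|\alpha|^2=\sqrt{3/2}$) is the sharp threshold. Both proofs are valid; yours is the more elementary bookkeeping of the same computation.
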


\begin{proof}
Working in the chart $U_1=\{[X]=[x_1,x_2];x_1\neq0\}\rightarrow \mathbb{C},[x_1,x_2]\longmapsto\frac{x_2}{x_1}=z$, we have : \[d([X],[e_2])=\frac{1}{\sqrt{1+\mid z \mid^2}}\] So \[d([X],[e_2])\geq | \alpha|^{-1}\Leftrightarrow | z |^2\leq | \alpha|^{2}-1\] and \[d([X],[e_2])\leq | \alpha|^{-2}\Leftrightarrow | z |^2\geq | \alpha|^{4}-1\] A direct computation gives: 
\[d([aX],[e_1])=\frac{| \alpha|^{-2}| z|}{\sqrt{1+| \alpha|^{-4}| z|^2}}\]
 Let $\beta=| \alpha|^{-2}$ and $f(x)=\frac{\beta x}{\sqrt{1+\beta^{2} x^2}}$.
We have to prove the following:
\begin{enumerate}
\item $x\leq\sqrt{\frac{1}{\beta}-1}\Rightarrow f(x)\leq \sqrt{\beta}$
\item $x\geq \sqrt{\frac{1}{\beta^2}-1}\Rightarrow f(x) \geq \frac{1}{2}$
\end{enumerate}
$f'(x)=\frac{\beta}{(1+\beta^2 x^2)^{3/2}}>0$. So $f$ increases and so:
$x\leq\sqrt{\frac{1}{\beta}-1}\Rightarrow x\leq\sqrt{\frac{1}{\beta}}\Rightarrow f(x)\leq f(\sqrt{\frac{1}{\beta}})=\frac{\sqrt{\beta}}{\sqrt{1+\beta}}\leq\sqrt{\beta}$, which proves the first point. For the second one, we have: $x\geq\sqrt{\frac{1}{\beta^2}-1}\Rightarrow f(x)\geq f(\sqrt{\frac{1}{\beta^2}-1})=\frac{\sqrt{1-\beta^2}}{\sqrt{2-\beta^2}}\geq \frac{1}{2}$ for $\beta \leq \sqrt{\frac{2}{3}}$ (i.e. for $| \alpha |^2 \geq \sqrt{\frac{3}{2}}$).
\end{proof}

Using the previous lemma and the fact that an orthogonal transformation preserves the distance $d$, we conclude the proposition. Indeed, for $n$ big enough, we have:
\begin{align*}
a_n((D([e_2],e^{-\lambda'n}))^c)
&\subset a_n((D([e_2],|\alpha_n| ^{-1}))^c)\\
&\subset D([e_1],|\alpha_n| ^{-1})\\
&\subset D([e_1],e^{-\lambda'n})
\end{align*}
So:
\[a_nk_n'((D([k_n'^{-1}e_2],e^{-\lambda'n}))^c)\subset D([e_1],e^{-\lambda'n})\]
Consequently:
\[k_na_nk_n'((D([k_n'^{-1}e_2],e^{-\lambda'n}))^c)\subset D([k_ne_1],e^{-\lambda'n})\]
So, if we write: $y_n=k_n'^{-1}([e_2])$ and $z_n=k_n([e_1])$, we obtain for $n$ big enough:
\[X_n((D(y_n,e^{-\lambda'n}))^c)\subset D(z_n),e^{-\lambda'n})\]
The second assertion can be obtained by an analogous reasoning: for $n$ big enough,
\begin{align*}
a_n(D([e_2],e^{-2\lambda''n}))
&\subset a_n(D([e_2],|\alpha_n| ^{-2}))\\
&\subset (D([e_1],\frac{1}{2}))^c
\end{align*}
So, for $n$ big enough:
\[X_n(D(y_n,e^{-2\lambda''n}))\subset (D(z_n,\frac{1}{2}))^c\]

\end{proof}

\section{Brownian motion and discretization.}\label{sectiondiscret}

This part deals with the Brownian motion. Firstly, we recall the classical conformal invariance property of the Brownian motion. Secondly, we include a detailed treatment of the discretization procedure of Furstenberg-Lyons-Sullivan  which is close but not identical to that of Lyons and Sullivan (see \cite{LS} and \cite{BL}).

\subsection{Brownian motion and conformal invariance.}

Let $(M,g)$ be a connected Riemannian manifold with bounded geometry. The Brownian motion on $(M,g)$ is the diffusion process associated to the Laplace-Beltrami operator $\Delta$. It is defined on a probability space $(\Omega,\mathbb{P})$ and denoted by $(B_t)_{t\geq 0}$. We will make use of the following classical result of P.L\'{e}vy \cite{Le} which states that conformal maps are Brownian paths preserving up to a change of time-scale:

\begin{teo}\label{theoPL}[L\'{e}vy]
Let $(S_1,g_1)$ and $(S_2,g_2)$ be two connected, complete Riemannian surfaces and $f:(S_1,g_1)\rightarrow(S_2,g_2)$ be a conformal map.
Let $(B_t)_{t\geq 0}$ be a Brownian motion starting from a point $b_0 \in M_1$. Then, the process $(f(B_t))_{t\geq 0}$ is a changed time Brownian motion. In other words, there exists a family of strictly increasing functions $\sigma_{\omega}:[0,\infty[\longrightarrow  [0,+\infty[$ and a Brownian motion $(B'_s)_{s\geq 0}$ starting from $f(b_0)$ such that
\[f \circ B=B'\circ\sigma\]
\end{teo}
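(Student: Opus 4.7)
The plan is to reduce the problem to local isothermal coordinates and then apply It\^o's formula together with the complex Dambis--Dubins--Schwarz theorem, which is the classical route for this statement. Fix a point $p \in S_1$ and pick isothermal coordinates $z = x+iy$ around $p$ and $w = u+iv$ around $f(p)$, so that $g_1 = \rho_1^2\, |dz|^2$ and $g_2 = \rho_2^2\, |dw|^2$ locally; conformality then forces $f$ to be a non-constant holomorphic function $w = f(z)$. Since the Brownian motion on $(S_1,g_1)$ has infinitesimal generator $\tfrac{1}{2}\Delta_{g_1} = \tfrac{1}{2\rho_1^2}(\partial_x^2+\partial_y^2)$, in these coordinates $B_t$ solves the It\^o SDE $dB_t = \rho_1^{-1}(B_t)\, dW_t$ for a standard planar Brownian motion $W$.

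Applying It\^o's formula to the holomorphic map $f$ and using harmonicity of its real and imaginary parts, the second-order terms cancel and $f(B_t)$ is a local martingale. The Cauchy--Riemann equations further imply that the real and imaginary parts of $f(B_t)$ have equal quadratic variations
$$\tau(t) := \int_0^t |f'(B_s)|^2\,\rho_1^{-2}(B_s)\,ds$$
and vanishing cross-variation, so that $f(B_t)$ is a conformal local martingale in $\mathbb{C}$. By the complex version of the Dambis--Dubins--Schwarz theorem, such a martingale is a time-change of a planar Brownian motion $\widetilde W$: one has $f(B_t) = \widetilde W(\tau(t))$. To convert $\widetilde W$ into a Brownian motion $B'$ on $(S_2,g_2)$ starting at $f(b_0)$, one slows it down by the density $\rho_2^{-2}$, since the generator of $B'$ in the coordinate $w$ is $\tfrac{1}{2\rho_2^2}\Delta_{\mathrm{eucl}}$; writing $B'_s = \widetilde W(\eta(s))$ with $\eta'(s) = \rho_2^{-2}(B'(s))$ and setting $\sigma_\omega := \eta^{-1}\circ\tau$ yields a strictly increasing time change with $f\circ B = B'\circ\sigma_\omega$.

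The local argument patches into a global statement because the construction is intrinsic to the Riemannian structures: the density $\sigma'_\omega(t)$ is the chart-independent conformal factor $\lambda(B_t)$, where $\lambda$ is the unique positive function satisfying $f^*g_2 = \lambda\, g_1$. Completeness of $(S_2,g_2)$ guarantees $B'$ can be defined on all of $[0,\infty[$, extending by an independent Brownian motion beyond the lifetime of $\sigma_\omega$ if necessary. In my view the main obstacle is the bookkeeping of the time change: one must check that $\tau$ is strictly increasing (which follows from $f'\neq 0$, valid since a conformal map between surfaces is a local diffeomorphism), that $\sigma_\omega$ remains strictly increasing almost surely, and that the chart-by-chart constructions glue into a single globally defined pair $(\sigma_\omega, B')$. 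Once these verifications are in place, the theorem follows from planar stochastic calculus and the conformal covariance of the Laplace--Beltrami operator in dimension two.
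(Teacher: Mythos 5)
The paper itself does not prove this theorem: it is stated as a classical result and attributed to L\'{e}vy via the citation~\cite{Le}, with the explicit formula $\sigma_\omega(t)=\int_0^t|f'(B_u)|^2\,du$ given only in the remark that follows. There is therefore no argument in the paper to compare yours against, so let me evaluate the proposal on its own terms.

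Your sketch is the standard modern proof (isothermal coordinates, It\^o's formula, conformal local martingale, Dambis--Dubins--Schwarz, time change to match the generator $\tfrac{1}{2\rho_2^{-2}}\Delta_{\mathrm{eucl}}$), and the chain of reductions is correct, including the identification of $\sigma'_\omega(t)$ with the pullback density $\lambda(B_t)$ where $f^*g_2=\lambda\,g_1$, and the observation that $B'$ is a priori defined only up to the stopping time $T=\lim_{t\to\infty}\sigma_\omega(t)$ and must be extended by an independent Brownian motion if $T<\infty$ (which matches the remark in the paper). One point deserves tightening: you justify strict monotonicity of $\tau$ by asserting $f'\neq 0$ everywhere ``since a conformal map between surfaces is a local diffeomorphism.'' In the paper's actual application $f$ is a developing map $\mathcal{D}$ of a \emph{branched} projective structure, so $\mathcal{D}'$ does vanish at the branch points and $\mathcal{D}$ is not a local diffeomorphism there. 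The theorem still holds in that generality, but the correct reason $\tau$ is strictly increasing is weaker and more robust: the zero set of $f'$ is discrete, hence Lebesgue-null, and Brownian motion almost surely spends zero time in a null set, so $\int_0^t|f'(B_s)|^2\rho_1^{-2}(B_s)\,ds$ is strictly increasing a.s.\ even through critical points. A second very minor point: a conformal map could be orientation-reversing (antiholomorphic in isothermal coordinates), but the argument is unchanged since conjugating a planar Brownian motion yields a planar Brownian motion.
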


\begin{rems}
\begin{enumerate}
\item $f \circ B=B'\circ\sigma$ means: for all $\omega\in\Omega$ and for all $t\in[0,\infty[$, we have: $f \circ B_t(\omega)=B'_{\sigma_t(\omega)}(\omega)$.
\item The image Brownian motion $B'_s=f\circ B_{\sigma^{-1}(s)}$ is not necessarely defined for all positive times. It is defined for $s\in[0,T]$ where the stopping time $T:=\lim\limits_{t\to\infty}\sigma(t)$ is not necessarely equal to $\infty$.

\item If $| f'(z)|$ denotes the modulus of the derivative of $f$ in $z$ relatively to the metrics $g_1$ and $g_2$, the time-scale change is explicitly given by the following:
 \[\sigma_{\omega}(t)=\displaystyle{\int_0^t}| f'(B_u(\omega))|^2 du\]
\end{enumerate}
\end{rems}

\subsection{Discretization of the Brownian motion.}
In the most general context, this procedure associates to the Brownian motion in a Riemannian manifold $(M,g)$ a Markov chain in a discret $*$-recurrent set $X\subset M$ with time homogeneous transition probabilities. Here, we explain the discretization in the case where $M=\tilde{\Sigma}=\mathbb{D}$ is the universal covering space of a hyperbolic Riemann surface $\Sigma$ of finite type, and $X=\pi_1(\Sigma)\cdot 0$. We follow the presentation of \cite{KL}.\newline
Let $\Sigma$ be a hyperbolic Riemann surface of finite type. The fundamental group $\pi_1(\Sigma)$ of $\Sigma$ acts on $\tilde{\Sigma}$ ($=\mathbb{D}$), the universal covering space of $\Sigma$, by isometry for the Poincar\'{e} metric of the disc. For all $X \in\pi_1(\Sigma)$, we define: $F_X=X.\overline{D(0,\delta)}$ and $V_X=X.D(0,\delta')$ with $\delta<\delta'$. We also require that $\delta$ and $\delta'$ are small enough so that $F_X\cap V_{X'}=\varnothing$ for $X\neq X'$. Let $(\Omega_x,\mathbb{P}_x)$ be the set of Brownian paths starting from $x$ in $\mathbb{D}$ with the Wiener measure associated to the Poincar\'{e} metric in the hyperbolic disc. $\underset{X \in \pi_1(\Sigma)}{\bigcup}F_X$ is a recurrent set for the Brownian motion (because $\Sigma$ is of finite type). Let $X\in\pi_1(\Sigma)$. For $x\in F_X$, consider $\epsilon_x^{\partial V_X}$ the exit measure of $V_X$ for a Brownian motion starting from $x$. The Harnack constant $C_X$ of the couple $(F_X,V_X)$ is defined by:
\[C_X=\sup \left\{\frac{d\epsilon_x^{\partial V_X}}{d\epsilon_y^{\partial V_X}}(z); x,y \in F_X, z\in\partial V_X\right\}\]
where $\frac{d\epsilon_x^{\partial V_X}}{d\epsilon_y^{\partial V_X}}$ is the Radon-Nikodym derivative. Notice that, as elements of $\pi_1(\Sigma)$ act isometrically on $\mathbb{D}$, the Harnack constant of $(F_X,V_X)$ does not depend on $X \in \pi_1(\Sigma)$ (i.e. there is a constant $C$ such that for all $X\in\pi_1(\Sigma)$, $C_X=C$). Hence, the family of couples $(F_X,V_X)_{X\in\pi_1(\Sigma)}$ define system of L-S data in the sense of Ballmann-Ledrappier \cite[p 4]{BL}.\\

If $x\in V_{Id}$ and $\omega \in \Omega_x$, we define recursively:
\[S_0(\omega)=\inf\left\{t\geq0 ; \omega(t)\notin V_{Id}\right\}\]
and, for $n\geq1$:
\[R_n(\omega)=\inf\left\{t\geq S_{n-1}(\omega) ; \omega(t)\in \cup F_X \right\}\]
\[S_n(\omega)=\inf\left\{t\geq R_{n}(\omega) ; \omega(t)\notin \cup V_X \right\}\]

We also define $X_n(\omega)$ by:
 \[X_0(\omega)=Id \text{ and } w(R_n(\omega)) \in F_{X_n(\omega)} \text{ for }n\geq 1\]
\[\kappa_n(\omega)=\frac{1}{C}\left(\frac{d\epsilon_{X_n(\omega).0}^{\partial V_{X_n(\omega)}}}{d\epsilon_{\omega(R_n(\omega))}^{\partial V_{X_n(\omega)}}}(\omega(S_n(\omega)))\right)\] 
By definition of $C$ and $\kappa_n$, note that: $\frac{1}{C^2}\leq \kappa_n\leq 1$.\\

 Now, define $(\Omega_0\times [0,1]^{\mathbb{N}},\mathbb{P}_0\otimes leb^{\otimes \mathbb{N}})=(\tilde{\Omega},\tilde{\mathbb{P}})$. Let
\[\begin{array}{ccccc}
N_k: & \tilde{\Omega} & \longrightarrow & \mathbb{N}\\
 & (\omega,\alpha)=(\omega,(\alpha_n)_{n\in \mathbb{N}})=\tilde{\omega} & \longmapsto &  N_k(\tilde{\omega}) \\
\end{array}\]
 be the random variable defined recursively by:
\[N_0(\tilde{\omega})=0\]
\[N_k(\omega,\alpha)=\inf\left\{n>N_{k-1}(\omega,\alpha); \alpha_n<\kappa_n(\omega)\right\}\]
The following theorem is stated in \cite{LS} in the cocompact case but it is observed in \cite[Proposition 4]{K} that it is also valid in the general set-up:

\begin{teo}\cite[Theorem 6]{LS}\label{theodiscretisation}
The distribution law of $X_{N_1}$ defines a probability measure $\mu$ on $\pi_1(\Sigma)$ which satisfies for any Borel set $A$ in $\mathbb{D}$:
\[\tilde{\mathbb{P}}(X_{N_1}=x_1;...;X_{N_k}=x_k,\omega(S_{N_k})\in A)=\mu(x_1) \mu(x_1^{-1}x_2)...\mu(x_{k-1}^{-1}x_k)\epsilon_{x_k.0}^{\partial V_{x_k}}(A)\]
\end{teo}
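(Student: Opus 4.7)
The plan is to prove the theorem by induction on $k$, with the base case $k=1$ being the essential computation; the induction step then follows by applying the strong Markov property at the stopping time $S_{N_k}$ together with the $\pi_1(\Sigma)$-equivariance of the construction. The key idea is that the coefficient $\kappa_n$ is a rejection-sampling weight designed so that, conditional on acceptance at the $n$-th attempt, the Brownian exit law from $V_{X_n}$ coincides with the exit law of Brownian motion started at the \emph{center} $X_n\cdot 0$ rather than at the actual entry point $\omega(R_n)$.

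To handle the base case, I would first condition on $X_1=x_1$ and on the entry point $\omega(R_1)=y\in F_{x_1}$. The strong Markov property implies that, before the acceptance test, $\omega(S_1)$ is distributed as $\epsilon_y^{\partial V_{x_1}}$. Independence of $\alpha_1$ from $\omega$ then gives
$$\tilde{\mathbb{P}}\bigl(\alpha_1<\kappa_1\mid X_1=x_1,\,\omega(R_1)=y\bigr)=\int_{\partial V_{x_1}}\kappa_1\,d\epsilon_y^{\partial V_{x_1}}=\frac{1}{C},$$
since the integrand is $(1/C)$ times a Radon--Nikodym derivative whose integral against the denominator measure equals $1$. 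The crucial feature is that this probability is \emph{independent of $y$}. A parallel computation shows that, conditional on $\{\alpha_1<\kappa_1\}$, the law of $\omega(S_1)$ is precisely $\epsilon_{x_1\cdot 0}^{\partial V_{x_1}}$ (the weighting $C\kappa_1\,d\epsilon_y^{\partial V_{x_1}}$ is exactly $d\epsilon_{x_1\cdot 0}^{\partial V_{x_1}}$). If the acceptance test fails one iterates: the strong Markov property at $S_1$ together with independence of $\alpha_2$ shows that the same analysis applies to the next attempt. A geometric-series argument over the attempts then shows that the accepted exit law, regardless of the number of preceding rejections, is still $\epsilon_{x_1\cdot 0}^{\partial V_{x_1}}$. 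Finally, since $\pi_1(\Sigma)$ acts on $\mathbb{D}$ by Poincar\'{e} isometries, this measure is the $x_1$-push-forward of $\epsilon_0^{\partial V_{Id}}$, so the conditional law of $x_1^{-1}X_{N_2}$ given $X_{N_1}=x_1$ equals the unconditional law of $X_{N_1}$ starting from $Id$; call this distribution $\mu$.

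With the base case in hand, iterating the strong Markov property at the successive times $S_{N_1},S_{N_2},\ldots,S_{N_k}$ produces the product structure $\mu(x_1)\mu(x_1^{-1}x_2)\cdots\mu(x_{k-1}^{-1}x_k)$, and the final application yields the factor $\epsilon_{x_k\cdot 0}^{\partial V_{x_k}}(A)$ for the exit point $\omega(S_{N_k})$. The main obstacle I expect is the careful coupling of the independent random variables $(\alpha_n)$ with the Brownian excursions, together with the rigorous use of the strong Markov property across infinitely many potential rejection attempts: one has to verify that almost surely $N_k<\infty$ for all $k$ (which follows from a Borel--Cantelli argument, since each attempt succeeds with probability $1/C>0$ regardless of past history), and one must check that the filtration generated jointly by $\omega$ and the $\alpha_n$ is well-adapted to the stopping times $R_n,S_n,N_k$. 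Once these measurability and integrability points are in place, the formula follows by a direct iteration of the base-case identity.
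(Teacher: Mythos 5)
This theorem is not proved in the paper: it is cited from Lyons--Sullivan [LS, Theorem 6] together with Kaimanovich's observation [K, Proposition 4] that it extends to the non-cocompact finite-type setting, so there is no in-paper argument to compare yours against. That said, your sketch correctly identifies the heart of the Lyons--Sullivan mechanism: the weight $\kappa_n$ is precisely the Radon--Nikodym rejection factor that makes the accepted exit law from $V_{x_1}$ equal to $\epsilon_{x_1\cdot 0}^{\partial V_{x_1}}$ uniformly over the entry point $\omega(R_n)$ and the rejection history, and the constant acceptance probability $1/C$ simultaneously decouples the label $X_{N_1}$ from the exit position and ensures $N_k<\infty$ a.s.\ via a conditional Borel--Cantelli argument. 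Your equivariance step --- identifying the conditional law of $x_1^{-1}\omega(S_{N_1})$ with $\epsilon_0^{\partial V_{Id}}$ and matching it to the law of $\omega(S_0)$ for the walk restarted at $Id$ --- is exactly the right way to close the Markov-chain structure and obtain the product of $\mu$'s in the statement. The one ingredient you should state explicitly is that a.s.\ finiteness of the $R_n$ (and hence of the $N_k$) relies on recurrence of $\bigcup_{X} F_X$ for the hyperbolic Brownian motion, which holds here because $\Sigma$ has finite type; your Borel--Cantelli argument only addresses the acceptance test and tacitly assumes that another attempt is always available.
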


\begin{cor}\cite{LS}
$(X_{N_k})_{k\in \mathbb{N}}$ is the realisation of a right random walk in $\pi_1(\Sigma)$ with law $\mu$, in other words  $(\gamma_{N_k}:=X_{N_{k-1}}^{-1}X_{N_k})_{k\in \mathbb{N}^*}$ is a sequence of independent, identically distributed random variables with law $\mu$.
\end{cor}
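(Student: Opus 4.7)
The proof is essentially a direct algebraic consequence of Theorem \ref{theodiscretisation}, so the plan is really a bookkeeping exercise once the hard probabilistic work encoded in that theorem is taken for granted.

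First, I would eliminate the dependence on the set $A$. Taking $A$ to be the entire boundary $\partial V_{x_k}$ in the formula
\[\tilde{\mathbb{P}}(X_{N_1}=x_1;\ldots;X_{N_k}=x_k,\omega(S_{N_k})\in A)=\mu(x_1)\mu(x_1^{-1}x_2)\cdots\mu(x_{k-1}^{-1}x_k)\,\epsilon_{x_k\cdot 0}^{\partial V_{x_k}}(A),\]
and using that $\epsilon_{x_k\cdot 0}^{\partial V_{x_k}}$ is a probability measure, I would obtain the finite-dimensional marginal
\[\tilde{\mathbb{P}}(X_{N_1}=x_1;\ldots;X_{N_k}=x_k)=\mu(x_1)\mu(x_1^{-1}x_2)\cdots\mu(x_{k-1}^{-1}x_k).\]

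Next I would perform the change of variables $g_i := x_{i-1}^{-1}x_i$ (with $x_0 = \mathrm{Id}$), so that $x_i = g_1 g_2\cdots g_i$, which is a bijection of $\pi_1(\Sigma)^k$ with itself. By definition this is exactly the substitution $\gamma_{N_i} = X_{N_{i-1}}^{-1}X_{N_i}$, so for any $(g_1,\ldots,g_k) \in \pi_1(\Sigma)^k$,
\[\tilde{\mathbb{P}}(\gamma_{N_1}=g_1;\ldots;\gamma_{N_k}=g_k)=\mu(g_1)\mu(g_2)\cdots\mu(g_k).\]
The fact that the joint law factorizes as a product of the marginals $\mu(g_i)$ is the definition of a sequence of independent random variables each with law $\mu$, which is precisely the statement that $(X_{N_k})_{k\in\mathbb{N}}$ is the realisation of a right random walk with increment law $\mu$.

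Since all the analytic and measure-theoretic difficulty (the definition of $\kappa_n$, the Harnack estimate controlling the Radon--Nikodym derivative, the enlargement of the probability space by the i.i.d. uniform variables $\alpha_n$, and the delicate verification that this produces the Markov identity with time-homogeneous kernel $\mu$) is already absorbed into Theorem \ref{theodiscretisation}, there is no remaining obstacle: the corollary is simply the reformulation of that theorem in terms of increments. The only care required is the trivial observation that $\mu$, as defined by the distribution of $X_{N_1}$, is a probability measure on $\pi_1(\Sigma)$, which again follows by summing the $k=1$ case of Theorem \ref{theodiscretisation} over all $x_1 \in \pi_1(\Sigma)$ and using $\tilde{\mathbb{P}}(\tilde{\Omega}) = 1$.
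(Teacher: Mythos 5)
Your argument is correct and is the natural derivation: the paper itself does not reproduce a proof of this corollary (it simply cites \cite{LS}), and what you write is exactly the standard bookkeeping step that turns the identity of Theorem \ref{theodiscretisation} into the i.i.d.\ statement. Integrating out $A$ (using that $\omega(S_{N_k})\in\partial V_{x_k}$ almost surely on the event $X_{N_k}=x_k$, so the exit measure is a probability measure) gives the finite-dimensional marginals of $(X_{N_1},\dots,X_{N_k})$, and the bijective change of variables $g_i=x_{i-1}^{-1}x_i$ then exhibits the joint law of the increments $(\gamma_{N_1},\dots,\gamma_{N_k})$ as the product $\mu(g_1)\cdots\mu(g_k)$, which is precisely independence with common law $\mu$. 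The closing remark that $\mu$ is a probability measure (summing the $k=1$ identity) is the right sanity check. No gaps.
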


The two following propositions will be useful later:
\begin{pro}\cite[Corollaire 3.4]{KL}\label{propo}
 There is a constant $T>0$ such that almost surely $\frac{S_{N_k}}{k}$ converges to $T$ when $k$ goes to infinity.
\end{pro}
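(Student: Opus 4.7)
The plan is to reduce the almost-sure convergence of $S_{N_k}/k$ to Birkhoff's ergodic theorem applied to an auxiliary Markov chain on a compact state space.

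First I would set $\Delta_j := S_{N_j} - S_{N_{j-1}}$ (with the convention $S_{N_0}:=S_0$) and introduce the projected endpoints $Z_k := X_{N_k}^{-1}\cdot \omega(S_{N_k})$, which all lie in the compact sphere $\partial V_{Id}\subset \tilde\Sigma$. Using the strong Markov property of the Brownian motion at the stopping time $S_{N_k}$ together with the $\pi_1(\Sigma)$-equivariance of the L-S data $\{(F_X,V_X)\}$ and of the Bernoulli randomization $(\alpha_n)$, the sequence $(Z_k)_{k\ge 0}$ is a Markov chain on $\partial V_{Id}$, and the conditional law of $(\gamma_{N_{k+1}},\Delta_{k+1})$ given the past up to time $S_{N_k}$ depends only on $Z_k$.

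Next I would verify that the transition kernel $P(y,\cdot)$ of $(Z_k)$ satisfies a Doeblin minorization $P(y,\cdot)\ge c\cdot m(\cdot)$ for some $c>0$ and some nonzero positive measure $m$ on $\partial V_{Id}$. This should follow from the same Harnack inequalities that underlie the constant $C$, combined with the smoothing property of the Brownian transition densities; it yields a unique invariant probability measure $\nu$ on $\partial V_{Id}$ together with uniform (geometric) ergodicity. Setting $f(y):=\mathbb{E}_y[\Delta_1]$, Birkhoff's theorem then gives, almost surely,
\begin{equation*}
\frac{S_{N_k}}{k} \;=\; \frac{S_0}{k}+\frac{1}{k}\sum_{j=1}^{k}\Delta_j \;\xrightarrow[k\to\infty]{}\; T \;:=\; \int_{\partial V_{Id}} f\,d\nu.
\end{equation*}
Positivity $T>0$ is immediate since $\Delta_1>0$ almost surely, and finiteness $T<\infty$ requires a uniform-in-$y$ bound on $f$. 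For this, the lower bound $\kappa_n\ge 1/C^2$ dominates $N_1$ by a geometric random variable; hitting times of $\bigcup F_X$ and exit times of $\bigcup V_X$ are controlled uniformly in the starting point using the recurrence of Brownian motion on the finite-area surface $\Sigma$ (via Gauss-Bonnet).

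The main obstacle, I expect, is exactly this uniform-in-$y$ control of $f(y)=\mathbb{E}_y[\Delta_1]$ near the cusps of $\Sigma$: although $\Sigma$ has finite hyperbolic area, Brownian excursions into a cusp region can be arbitrarily long with non-negligible probability, so one needs sharp integrability estimates of Khinchin--Sullivan logarithm-law flavor to guarantee $f\in L^1(\nu)$. Once this integrability is secured, the ergodic theorem closes the argument and pins down the constant $T$.
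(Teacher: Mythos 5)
The paper does not prove this proposition; it is quoted verbatim from Karlsson--Ledrappier \cite[Corollaire 3.4]{KL} and used as a black box. So there is no in-house argument to compare against, and the evaluation has to be of your sketch on its own terms.

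Your route (view $Z_k=X_{N_k}^{-1}\cdot\omega(S_{N_k})\in\partial V_{Id}$ as a stationary Markov chain and feed the stationary sequence $\Delta_k$ into the ergodic theorem) is the natural one and is in the spirit of the Lyons--Sullivan/Ballmann--Ledrappier framework. Two remarks. First, you do not need a Doeblin minorization to \emph{produce} the invariant measure: Theorem \ref{theodiscretisation} already states that, conditionally on the word $(X_{N_1},\dots,X_{N_k})$, the point $\omega(S_{N_k})$ has law $\epsilon_{X_{N_k}\cdot 0}^{\partial V_{X_{N_k}}}$, so by equivariance each $Z_k$ has law $\epsilon_0^{\partial V_{Id}}$, independently of $k$ and of the word. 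Hence $\nu=\epsilon_0^{\partial V_{Id}}$ is the invariant law a priori and $T=\int_{\partial V_{Id}}\mathbb{E}_y[\Delta_1]\,d\epsilon_0^{\partial V_{Id}}(y)$. What you do still need from a minorization (or from strict positivity of the transition density, which Brownian smoothing gives) is \emph{ergodicity}, so the Birkhoff limit is the constant $T$ and not a nontrivial invariant random variable.

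Second, the integrability concern $\mathbb{E}[S_{N_1}]<\infty$ is the right thing to worry about, but your proposed Khinchin--Sullivan sharp tail estimates are heavier than needed. Note that Brownian motion on $\tilde\Sigma=\mathbb{D}$ is transient, so the hitting time of $\bigcup F_X$ must be controlled through the projected motion on $\Sigma$, which is \emph{positive} recurrent because $\Sigma$ has finite hyperbolic area; mere almost-sure recurrence (which is all Gauss--Bonnet plus recurrence gives) does not yield a first moment. Combined with the balanced property of the L--S data that the paper records just before Proposition \ref{firstmoment}, this is precisely the setting in which the first-moment estimates of \cite{BL} apply. With that replacement your scheme closes.
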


Note that there is a constant $D$ such that, $\forall X\in\pi_1(\Sigma)$ and $\forall z\in\partial F_X$, the Green function $G_{V_x}(X\cdot 0,z)=D$. This is because the Green function of a hyperbolic disc centered in $0$ is radial. Hence the L-S data $(F_X,V_X)_{X\in\pi_1(\Sigma)}$ are balanced (see definition in \cite[p9]{BL}). So we have:
\begin{pro}\cite[Theorem 3.2.b]{BL}\label{firstmoment}
The measure $\mu$ has full support and has a finite first moment with respect to the distance d associated to the Poincar\'{e} metric in $\mathbb{D}$, in other words $\displaystyle\int_{\gamma \in \pi_1(\Sigma)}d(\gamma\cdot 0,0)\,d\mu(\gamma)<+\infty$.
\end{pro}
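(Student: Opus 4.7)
Plan. The proposition has two claims, so I would split the argument accordingly, noting that the paragraph preceding the statement already verifies the hypothesis needed to invoke \cite[Theorem 3.2.b]{BL}, namely that the L--S data $(F_X,V_X)_{X\in\pi_1(\Sigma)}$ are \emph{balanced} in the sense of Ballmann--Ledrappier (the Green function $G_{V_X}(X\cdot 0,\cdot)$ is constantly equal to $D$ on $\partial F_X$ because it is radial in the hyperbolic disc). So the first moment claim is really a citation: the balanced hypothesis is the only non-trivial input of the theorem, and everything else (equal Harnack constants $C$, covariance under $\pi_1(\Sigma)$) is automatic from the fact that $\pi_1(\Sigma)$ acts by isometries.

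For the full support claim I would argue directly, without invoking \cite{BL}. By Theorem \ref{theodiscretisation} with $k=1$ and $A=\partial V_\gamma$ we get $\mu(\gamma)=\tilde{\mathbb{P}}(X_{N_1}=\gamma)$. I would then lower-bound this probability by the probability of the event $\{N_1=1,\,X_1=\gamma\}$, i.e.\ the event that the Brownian path first enters $\bigcup_X F_X$ through $F_\gamma$ \emph{and} that the first Bernoulli test succeeds. By construction the latter conditional probability is at least $\kappa_1\geq 1/C^2>0$ regardless of the path, so it suffices to check that from any starting point in $V_{\mathrm{Id}}$ the Brownian motion has positive probability of hitting $F_\gamma$ before any other $F_{\gamma'}$. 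This is standard: $\mathbb{D}$ is path-connected, $\bigcup_X F_X$ is recurrent for the Brownian motion (since $\Sigma$ has finite type), and any open set has positive hitting probability by the support theorem for the Wiener measure. Hence $\mu(\gamma)>0$ for every $\gamma\in\pi_1(\Sigma)$.

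For the finite first moment, beyond the citation, one can sketch the mechanism: in the balanced setting one has the identity
\[
\displaystyle\int_{\pi_1(\Sigma)} f(\gamma\cdot 0)\,d\mu(\gamma) \;=\; \tfrac{1}{D}\,\mathbb{E}_{0}\!\left[\displaystyle\int_0^{S_{N_1}} (\Delta f)(\omega(t))\,dt\right]
\]
for nice $f$, so taking $f(x)=d(x,0)$ and using the bounded geometry of $\mathbb{D}$ (where $\Delta d(\cdot,0)$ is bounded outside a neighborhood of $0$) together with the fact that $\mathbb{E}_0[S_{N_1}]<\infty$ (which is essentially Proposition \ref{propo} for $k=1$, or can be obtained from the balanced property directly), gives the finiteness.

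The main obstacle, were one forced to prove the BL theorem from scratch, would be the first-moment estimate: one would need to control the tail of $\mu$ at infinity in $\pi_1(\Sigma)$, which requires either a quantitative version of the balance identity above or a careful potential-theoretic comparison between the $\pi_1$-distance and the hyperbolic distance traveled by the Brownian path between discretization times. In the present context this is not needed, since the geometric verification of the balanced property made just above the statement reduces everything to a direct application of \cite[Theorem 3.2.b]{BL}.
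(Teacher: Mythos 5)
Your proposal is essentially the same as the paper's: the paper's entire ``proof'' is the verification, in the paragraph immediately preceding the proposition, that the Green function $G_{V_X}(X\cdot 0,\cdot)$ is constant on $\partial F_X$ (so the L--S data are balanced in the sense of \cite[p.~9]{BL}), followed by the direct citation of \cite[Theorem 3.2.b]{BL} -- nothing more is written. You identify this correctly. Your additional direct argument for full support (lower-bounding $\mu(\gamma)=\tilde{\mathbb{P}}(X_{N_1}=\gamma)$ by $\tilde{\mathbb{P}}(N_1=1,\,X_1=\gamma)\geq \frac{1}{C^2}\,\tilde{\mathbb{P}}(X_1=\gamma)$, then using the support theorem and local finiteness of $\{F_{\gamma'}\}$ to get $\tilde{\mathbb{P}}(X_1=\gamma)>0$) is a correct supplement that the paper does not bother to spell out since it, too, is part of the cited theorem; the sketch of the first-moment mechanism is reasonable but is not relied upon, and indeed the paper does not attempt to reprove that part either.
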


\section{Proof of theorem \ref{theoprincipal}.}\label{sectionproof1}

Actually, we are going to prove the following theorem which is a reformulation of theorem \ref{theoprincipal} including the case where $\mathcal{D}$ is not onto. In this theorem, the Brownian motion in $\mathbb{D}$ (respectively $\mathbb{C}\mathbb{P}^1$) is the one associated to the hyperbolic metric (respectively any complete metric in its conformal class).

\begin{teo}\label{theoprincipal2}
Let $\Sigma$ be a Riemann surface of finite type endowed with a branched projective structure. Let $\mathcal{D}:\tilde{\Sigma}\rightarrow \mathbb{C}\mathbb{P}^1$ be a developing map and $\rho:\pi_1(\Sigma)\rightarrow PSL(2,\mathbb{C})$ be the monodromy representation associated to $\mathcal{D}$. Assume that $\rho$ is parabolic and non elementary. Let $(x_0,z_0)$ be a couple of points in $\tilde{\Sigma}\times\mathbb{C}\mathbb{P}^1$ such that $\mathcal{D}(x_0)=z_0$ and let $h$ be the germ of $\mathcal{D}^{-1}$ such that $h(z_0)=x_0$. 
\begin{description}
\item [First case: $\mathcal{D}$ is onto.]
Then the two following equivalent assertions are satisfied:
\begin{enumerate}
\item For almost every Brownian path $\omega$ starting from $x_0$, $\mathcal{D}(\omega(t))$ does not have limit when $t$ goes to $\infty$.
\item For almost every Brownian path $\omega$ starting from $z_0$, $h$ can be analytically continued along $\omega([0,\infty[)$.
\end{enumerate}
\item [Second case: $\mathcal{D}$ is not onto.]
Then the two following equivalent assertions are satisfied:
\begin{enumerate}
\item For almost every Brownian path $\omega$ starting from $x_0$, there is a point $z(\omega)$ such that $\lim\limits_{t \to\infty}\mathcal{D}(\omega(t))=z(\omega)$.
\item For almost every Brownian path $\omega$ starting from $z_0$, $h$ cannot be analytically continued along $\omega([0,\infty[)$.
\end{enumerate}
\end{description}

\end{teo}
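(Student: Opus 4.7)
My plan is to handle both cases in parallel via random-walk techniques, using the discretization of Section \ref{sectiondiscret} and the contraction/expansion properties from Proposition \ref{propma}. First, the equivalence of the two assertions within each case follows from L\'{e}vy's conformal invariance (Theorem \ref{theoPL}): a Brownian path $\omega$ in $\tilde{\Sigma}$ starting at $x_0$ is carried by $\mathcal{D}$, up to a strictly increasing time change, into a Brownian path $\omega'$ in $\mathbb{C}\mathbb{P}^1$ starting at $z_0$, and conversely any Brownian path $\omega'$ in $\mathbb{C}\mathbb{P}^1$ along which $h=\mathcal{D}^{-1}$ admits analytic continuation lifts to a Brownian path in $\tilde{\Sigma}$; since hyperbolic Brownian motion lives for all time, the lift extends to all $t$ precisely when $\mathcal{D}(\omega(t))$ has no limit in $\mathbb{C}\mathbb{P}^1$.

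Next I push the problem to the level of random walks. The Furstenberg--Lyons--Sullivan discretization produces a right random walk $X_n(\omega)\in\pi_1(\Sigma)$ with law $\mu$ of full support and finite first hyperbolic moment (Proposition \ref{firstmoment}). Setting $Y_n=\rho(X_n)$, the image walk has law $\mu'=\rho_{*}\mu$ whose support generates the non-elementary group $\Gamma$ and which has finite first log-moment, since $\log\|\rho(\gamma)\|$ grows linearly in the word length of $\gamma$, hence in $d(\gamma\cdot 0,0)$. Theorem \ref{theoma} and Proposition \ref{propma} then yield, for almost every $\omega$, points $y_n(\omega),z_n(\omega)\in\mathbb{C}\mathbb{P}^1$ with the stated contraction/expansion properties, and a limit $z(\omega)=\lim z_n$; moreover at the stopping times $S_{N_n}$ the path is close to $X_n\cdot 0$, so $\mathcal{D}(\omega(S_{N_n}))\approx Y_n\cdot z_0\to z(\omega)$ almost surely. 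The two cases of the theorem are now distinguished by what happens during the excursions between consecutive discrete times: the path stays in a neighborhood of $X_n\cdot\mathcal{F}$, and its $\mathcal{D}$-image lies in $Y_n\cdot\mathcal{D}(\mathcal{F})$. In the non-onto case, $\mathcal{D}(\mathcal{F})$ omits an open set; combining this with the parabolicity of $\rho$ at each cusp to control both $\mathcal{D}$ near the cusps and the possible location of the backward limit point $y(\omega)=\lim y_n$, one shows that $\mathcal{D}(\mathcal{F})$ eventually avoids $D(y_n,e^{-\lambda'n})$, so Proposition \ref{propma}.(1) forces the entire excursion's image into $D(z_n,e^{-\lambda'n})$, giving $\mathcal{D}(\omega(t))\to z(\omega)$. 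In the onto case, by contrast, the preimage $\mathcal{D}^{-1}(D(y_n,e^{-2\lambda''n}))$ meets every fundamental domain, and Proposition \ref{propma}.(2) ensures that whenever the $n$-th excursion visits this preimage, $\mathcal{D}(\omega(t))$ leaves $D(z_n,\tfrac12)$, producing a jump of spherical size at least $\tfrac12$ away from $z(\omega)$.

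The heart of the onto case is therefore a \emph{uniform} lower bound $p>0$ on the probability that the $n$-th Brownian excursion inside $X_n\cdot\mathcal{F}$ visits $\mathcal{D}^{-1}(D(y_n,e^{-2\lambda''n}))$; a Borel--Cantelli argument then yields infinitely many such jumps of $\mathcal{D}(\omega(t))$ of size $\geq\tfrac12$ away from $z(\omega)$, ruling out convergence. This is the main obstacle: the preimages in question are tiny, equivariantly distributed patches whose placement is governed by the developing map, which under the sole hypothesis of parabolicity of the monodromy (not of the projective structure, cf.\ Remark \ref{rem1}) need not be a simple logarithm near the punctures. I would need to show that for $\omega$-typical $y_n$, a definite positive fraction of the preimage sits in a fixed compact part of $\mathcal{F}$, via a controlled distortion estimate for $\mathcal{D}$ on compacta together with cusp analysis, and then convert geometric size into hitting probability by Harnack-type estimates for hyperbolic Brownian motion. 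This combination of pointwise contraction in $PSL(2,\mathbb{C})$ with a uniform excursion-hitting estimate is precisely the mechanism that allows the Brownian motion to evade the full singular set of $h$ produced in Proposition \ref{proCDFG}.
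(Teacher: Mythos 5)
Your framework for the onto case matches the paper's (discretization, push-forward random walk, Proposition~\ref{propma}), but there are two genuine gaps in that argument, and the non-onto case is handled by an entirely different and simpler mechanism in the paper, which your proposal misses.

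\textbf{Onto case.} The claimed ``\emph{uniform} lower bound $p>0$'' on the excursion hitting probability is false. The preimage $\mathcal{D}^{-1}(D(y_n,e^{-2\lambda''n}))$ (intersected with the relevant compact region) is a hyperbolic disc of radius comparable to $e^{-2\lambda''n}$, and the probability that a Brownian excursion hits such a disc before exiting a fixed region decays like $c/n$: this is the classical logarithmic-potential estimate in dimension two. The paper proves exactly $\tilde{\mathbb{P}}(E_k\mid E_{k-1}^c,\dots,E_N^c)\geq c/k$ (Lemma~\ref{lemtechnique}), and the conclusion then comes from the divergence of $\sum c/k$ via a telescoping product. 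This brings up the second gap you do not address: the events $E_k$ are \emph{not} independent, because the target $y_k$ depends on the whole walk up to time $k$. One must condition on the complements of the previous events (which the paper can do because $y_k$ and $c_k$ are independent, a consequence of the walk structure), not invoke Borel--Cantelli directly. Finally, your worry about controlling $\mathcal{D}$ near the cusps in the onto case is unfounded: Lemma~\ref{lem1} uses surjectivity to produce a fixed ball $D(0,r)$ whose image is all of $\mathbb{C}\mathbb{P}^1$, so the target disc always has a preimage component in a \emph{fixed compact region}, on which $|\mathcal{D}'|$ is bounded. No cusp analysis is needed; this is precisely why the onto case needs only parabolicity of the monodromy and not of the projective structure.

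\textbf{Non-onto case.} Your proposed argument (showing $\mathcal{D}(\mathcal{F})$ eventually avoids $D(y_n,e^{-\lambda'n})$ so that Proposition~\ref{propma}.(1) traps the whole excursion inside $D(z_n,e^{-\lambda'n})$) is not what the paper does and would require delicate control of the backward limit point relative to the omitted set, which you do not provide. The paper takes a structurally different and much shorter route that avoids the random walk entirely: since $\Gamma$ is non-elementary and preserves the nonempty closed set $U^c=(\mathcal{D}(\tilde{\Sigma}))^c$, $\Gamma$ cannot be dense in $PSL(2,\mathbb{C})$; if $\Gamma$ is discrete, the limit set $\Lambda(\Gamma)\subset U^c$ has positive logarithmic capacity by Myrberg's theorem; otherwise $\Gamma$ is (up to index $2$) conjugate into $PSL(2,\mathbb{R})$ and $U^c$ contains a Jordan curve. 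In either case Brownian motion on $\mathbb{C}\mathbb{P}^1$ almost surely hits $U^c$ in finite time, so $h$ cannot be continued, which is immediately equivalent to the first assertion by conformal invariance. You should replace your sketch here with this capacity/limit-set argument.
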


\begin{proof}

Firstly, notice that according to remark \ref{rem1}, as the monodromy group $\Gamma:=\rho(\pi_1(\Sigma))$ is non elementary, $\Sigma$ is a hyperbolic Riemann surface.
Notice also that in any of the two cases ($\mathcal{D}$ onto and $\mathcal{D}$ not onto), the two conclusions are equivalent because of the conformal invariance of the Brownian motion. More precisely, in the first case, if $(B_t)_{t\in[0,\infty[}$ is a Brownian motion in $\tilde{\Sigma}$, then $(\mathcal{D}\circ B_{\sigma^{-1}(s)})_{0\leq s\leq T}$ is a Brownian motion in $\mathbb{C}\mathbb{P}^1$ stopped at time $T=\lim\limits_{t\to\infty}\sigma(t)$. If $\mathcal{D}\circ B_t$ does not have limit when $t$ goes to $\infty$, then almost surely $T=\infty$. Thus, almost surely, the germ $h$ of a local inverse of $\mathcal{D}$ can be analytically continued along the Brownian motion (defined for every positive time) $(\mathcal{D}\circ B_{\sigma^{-1}(s)})_{0\leq s\leq \infty}$. Conversely, if $h$ can be analytically continued along a generic Brownian path in $\mathbb{C}\mathbb{P}^1$, then almost surely $\mathcal{D}\circ B_t$ does not have limit when $t$ goes to $\infty$. Otherwise, we would have $T(\omega)<\infty$ for $\omega$ belonging to a set $A$ with strictly positive Wiener measure. Hence, for all $\omega\in A$, the germ $h$ could not be analytically continued along the Brownian path $(\mathcal{D}\circ B_{\sigma_{\omega}^{-1}(s)})_{0\leq s\leq T(\omega)}$. The proof of the equivalence of the two assertions in the second case (i.e. in the case where $\mathcal{D}$ is not onto) is similar.

\subsection{Proof in the case where $\mathcal{D}$ is onto.}

\paragraph{The discretization. }
In order to prove the theorem, we are going to use the discretization procedure explained in the previous part and the contraction property \ref{propma} proved in section \ref{sectionrw}. To simplify the notations, we take $x_0=0$ and $\omega \in \Omega_0$. If $\tilde{\omega}=(\omega,\alpha) \in \tilde{\Omega}$, then the path $\omega$ can be written as an infinite concatenation of paths:
\[\omega=\beta_0\ast\omega_0\ast\beta_1\ast\omega_1\ast\cdot\cdot\cdot\]
where $\beta_0=\omega_{|[0,S_{N_0}]}$, for $k\geq 0$, $\omega_k=\omega_{|[S_{N_k},R_{N_{k+1}}]}$ and for $k\geq 1$, $\beta_k=\omega_{|[R_{N_k},S_{N_k}]}$. Let $c_k(t)=X_{N_k}^{-1}\cdot\omega_k(t-S_{N_k})$. The $(c_k)_{k\in\mathbb{N}}$ form a family of portions of Brownian paths independent and identically distributed: the distibution law of their starting point is the exit measure of $V_{Id}=D(0,\delta')$ for a Brownian motion starting at $0$ and they are stopped at time $R_{N_{k+1}}-S_{N_k}$.
\[\omega=\beta_0\ast X_{N_0}c_0\ast\beta_1\ast X_{N_1}c_1\ast\cdot\cdot\cdot\]
Because of the $\rho$-equivariance, we have:
\[\mathcal{D}(\omega)=\mathcal{D}(\beta_0)\ast \rho(X_{N_0})\mathcal{D}(c_0)\ast\mathcal{D}(\beta_1)\ast \rho(X_{N_1})\mathcal{D}(c_1)\ast\cdot\cdot\cdot\]

Now, we are going to push forward the right random walk $X_{N_k}$ by $\rho$ in order to obtain a right random walk in the monodromy group $\Gamma$ and then apply proposition \ref{propma}. For this, we write $\tilde{\mu}=\rho_{\ast}\mu$ (where $\mu$ is the probability measure in $\pi_1(\Sigma)$ defined by the discretization procedure of the previous part) and $Y_{N_k}=\rho(X_{N_k})$. The process $(Y_{N_k})_{k\geq 0}$ is a realisation of a right random walk in $\Gamma$ with law $\tilde{\mu}$. The parabolicity of the monodromy representation implies the following (see \cite[Theorem 3.4.2]{A}) for a proof): there is a constant $a$ such that for all $\alpha \in \pi_1(\Sigma)$, we have $log(||\rho(\alpha)||)\leq a\cdot d(0,\alpha\cdot 0)$. We deduce, using proposition \ref{firstmoment}, that $\displaystyle\int_{\alpha \in \pi_1(\Sigma)}log(||\rho(\alpha)||)\,d\mu(\alpha)<+\infty$ and so $\displaystyle\int_{\gamma \in \Gamma}log(||\gamma||) d\tilde{\mu}(\gamma)<+\infty$. Then, the hypothesis of proposition \ref{propma} are satisfied. Consequently, there are $0<\lambda'<\lambda''$ such that for $\tilde{\mathbb{P}}$-almost every $\tilde{\omega}\in\tilde{\Omega}$, there is $N(\tilde{\omega})$ such that for all  $k>N(\tilde{\omega})$, there is $y_k(\tilde{\omega}), z_k(\tilde{\omega}) \in \mathbb{C}\mathbb{P}^1$ such that:
\begin{enumerate}
\item $Y_{N_k}\left((D(y_{k},e^{-\lambda'k}))^c\right)\subset D(z_{k},e^{-\lambda'k})$
\item $d\left(Y_{N_k}(D(y_{k},e^{-2\lambda''k})),z_{k}\right)\geq \frac{1}{2}$
\end{enumerate}

Then, the theorem follows from the following proposition:
\begin{pro}
\label{profond}
For almost every $\tilde{\omega}$, there is a sequence $(k_n)_{n \in \mathbb{N}}$ converging to infinity such that:
$$\mathcal{D}(c_{k_n})\cap D(y_{k_n},e^{-2\lambda''k_n})\neq \varnothing \text{ and } \mathcal{D}(c_{k_n})\cap (D(y_{k_n},e^{-\lambda'k_n}))^c \neq \varnothing.\\$$
\end{pro}

\paragraph{Proposition \ref{profond} implies Theorem \ref{theoprincipal2}. }
Indeed, by proposition \ref{propma}, the previous proposition implies that for an infinity of values of $k$, the portion $\rho(X_{N_k})\mathcal{D}(c_k)$ of the path $\mathcal{D}(\omega)$ visits $D(z_{k},e^{-\lambda'k})$ and $D(z_{k},\frac{1}{2})^c$, which proves that $\mathcal{D}(\omega(t))$ does not have limit when $t$ goes to infinity.\\

\paragraph{The technical lemma. }
We still have to prove proposition \ref{profond}. For that purpose, let us define:
\[E_k=\left\{\mathcal{D}(c_{k})\cap D(y_{k},e^{-2\lambda''k})\neq \varnothing\right\} \cap \left\{\mathcal{D}(c_{k})\cap (D(y_{k},e^{-\lambda'k}))^c \neq \varnothing\right\}\]

We have to prove that:

\begin{equation}\label{eq1}
 \tilde{\mathbb{P}}\left(\underset{n\geq0}{\cap}\underset{k\geq n}{\cup}E_k\right)=1 
\end{equation}

It turns out that there is a constant $c$ such that for all $k\in\mathbb{N}^*$, $\tilde{\mathbb{P}}(E_k)\geq\frac{c}{k}$ which implies $\sum\limits_{k\geq 1}\tilde{\mathbb{P}}(E_k)=\infty$. So, if the sequence $(E_k)_{k \in \mathbb{N}}$ were a sequence of independent events, one could conclude that \eqref{eq1} is true using Borel-Cantelli lemma. Unfortunately, one convinces easily that the $E_k$ are not independent: this is due to the fact that the $y_k$ are not mutually independent. This observation makes the proof of \eqref{eq1} more technical: instead of proving that $\tilde{\mathbb{P}}(E_k)\geq\frac{c}{k}$, we are going to prove the following lemma: 

\begin{lem}
\label{lemtechnique}
There exists constants $c>0$ and $N_0\in\mathbb{N}^*$ such that for all $N\geq N_0$ and $k>N$, we have:
$$\tilde{\mathbb{P}}\left(E_k|E_{k-1}^c,\cdots,E_N^c\right)\geq \frac{c}{k}.$$
\end{lem}
\paragraph{Lemma \ref{lemtechnique} implies proposition \ref{profond}. }
Let us assume that Lemma \ref{lemtechnique} is proved. To prove proposition \ref{profond}, it is enough to prove \eqref{eq1}. So, it is enough to prove that $\forall N \in \mathbb{N}$,  $\tilde{\mathbb{P}}\left(\bigcap \limits _{n=N}^{\infty} E_n^c\right)=0$. Let $N \geq N_0$:
\[\tilde{\mathbb{P}}\left(\bigcap \limits _{n=N}^{\infty} E_n^c\right)=\underset{k\rightarrow\infty} {\lim}\tilde{\mathbb{P}}\left(\bigcap \limits _{n=N}^{k} E_n^c\right)\]

Let $k>N$, $u_k= \tilde{\mathbb{P}}\left(\bigcap \limits _{n=N}^{k} E_n^c\right)$ and $\alpha_k=\tilde{\mathbb{P}}\left(E_k^c|E_{k-1}^c,\cdots,E_N^c\right)$. We have:

\begin{eqnarray*}
  u_k & = & \alpha_k\cdot u_{k-1} \\
   & = & \alpha_k \alpha_{k-1} \cdots \alpha_{N+1}.u_N \\
   & \leq & \left(1-\frac{c}{k}\right)\left(1-\frac{c}{k-1}\right) \cdots \left(1-\frac{c}{N+1}\right).u_N \\
   & = & \prod \limits _{n=N+1}^{k}\left(1-\frac{c}{n}\right)\cdot u_N\\
   & \leq & \prod \limits _{n=N+1}^{k}e^{-\frac{c}{n}}\cdot u_N\\
   & = & \exp \left(-\sum \limits _{n=N+1}^{k}\frac{c}{n}\right)\cdot u_N 
     \underset{k \to \infty}{\longrightarrow}  0
\end{eqnarray*}

So, $\forall N>N_0$, $\tilde{\mathbb{P}}\left(\bigcap \limits _{n=N}^{\infty} E_n^c\right)=0$. And if $N<N_0$, we have $\bigcap \limits _{n=N}^{\infty} E_n^c \subset \bigcap \limits _{n=N_0}^{\infty} E_n^c$. So $\tilde{\mathbb{P}}\left(\bigcap \limits _{n=N}^{\infty} E_n^c\right)=0$, which finishes to prove \eqref{eq1}.\\

\paragraph{Proof of lemma \ref{lemtechnique}. }
We will need the following lemma:

\begin{lem}\label{lem1}
$\exists \beta>0,\, \exists r>0,\, \exists N_0 \in \mathbb{N}$ such that $\forall y \in \mathbb{C}\mathbb{P}^1,\, \exists x \in D(0,r)$ such that $\forall k\geq N_0$, we have:
\[D\left(x,\beta e^{-2\lambda''k}\right)\subset \mathcal{D}^{-1}\left(D(y,e^{-2\lambda''k})\right)\] 
\end{lem}

\begin{proof}
$\mathcal{D}$ is onto, so $\exists r>0$ such that $ \mathcal{D}(D(0,r))=\mathbb{C}\mathbb{P}^1 $. Let $\frac{1}{\beta}=\underset{D(0,2r)}{sup}\mid \mathcal{D}' \mid$. Let $N_0\in \mathbb{N}$ such that $\beta e^{-2\lambda''N_0}<r$. Let $y \in \mathbb{C}\mathbb{P}^1$ and let $x \in D(0,r)$ such that $\mathcal{D}(x)=y$. Let $k\geq N_0$ and $x_1 \in  D(x,\beta e^{-2\lambda''k})$. We have: 
$d(\mathcal{D}(x),\mathcal{D}(x_1))\leq \underset{D(x,\beta e^{-2\lambda''k})}{sup}|\mathcal{D}' | \cdot d(x,x_1)$. As $D(x,\beta e^{-2\lambda''k})\subset D(0,2r)$, we deduce that $d(\mathcal{D}(x),\mathcal{D}(x_1))\leq \frac{1}{\beta}.\beta. e^{-2\lambda''k}=e^{-2\lambda''k}$, which finishes the proof.
\end{proof}

Let us notice that, for $k$ big enough:
\[E_k=\left\{\mathcal{D}(c_{k})\cap D(y_{k},e^{-2\lambda''k})\neq \varnothing\right\}\]
Indeed, for $k$ big enough, the event $\mathcal{D}(c_{k})\cap (D(y_{k},e^{-\lambda'k}))^c \neq \varnothing$ is certain. To see this, note that: 
\[\left\{\mathcal{D}(c_{k})\cap (D(y_{k},e^{-\lambda'k}))^c \neq \varnothing\right\}=\left\{c_{k}\cap \mathcal{D}^{-1}(D(y_{k},e^{-\lambda'k}))^c \neq \varnothing\right\}=\varnothing\]
 Moreover, if $D$ is a compact disc in $\mathbb{D}$, then $\mathcal{D}^{-1}(D(y_{k},e^{-\lambda'k}))\cap D$ is a finite union of topological discs whose diameter converge to $0$ when $k$ goes to infinity, and the number of these discs is bounded by the degree of $\mathcal{D}_{|D}$. So, the sequence of continuous paths $c_k$ from $\partial V_{Id}$ to $\cup F_{\gamma}$ cannot, for $k$ big enough, be included in $\mathcal{D}^{-1}(D(y_{k},e^{-\lambda'k}))$.

Let $N \in \mathbb{N}$ big enough and $k>N$. Write $D_k(\tilde{\omega})=\mathcal{D}^{-1}(D(y_{k}(\tilde{\omega}),e^{-2\lambda''k}))$. We are going to prove the following lemma:
\begin{lem}
\[\tilde{\mathbb{P}}\left(E_k\,|\,E_{k-1}^c,\cdots,E_N^c\right)\geq \underset{x\in D(0,r)}{inf}\tilde{\mathbb{P}}\left(\{c_{0}\cap D(x,\beta e^{-2\lambda''k})\neq \varnothing\}\right)\]
where $r$ and $\beta$ are given in lemma \ref{lem1}.
\end{lem}

\begin{proof}
From the proof of proposition \ref{propma}, we see that, by construction, $y_k$ depends only on the set $X_{N_1}$,...,$X_{N_k}$ (i.e. it depends on the set $\gamma_{N_1}$,...,$\gamma_{N_{k}}$) and $c_k$ depends only on $X_{N_k}^{-1}X_{N_{k+1}}=\gamma_{N_{k+1}}$. As the $\gamma_{N_i}$ are mutually independent, we deduce that $y_k$ and $c_k$ are independent. Thus, we have:
\[\tilde{\mathbb{P}}\left(E_k\,|\,E_{k-1}^c,\cdots,E_N^c\right)\] 
\begin{align*}
 \geq \underset{y \in \mathbb{C}\mathbb{P}^1}{inf} \,\tilde{\mathbb{P}}\Big(&\{c_k \cap \mathcal{D}^{-1}(D(y,e^{-2\lambda''k})) \neq \varnothing\} \,|\,\\
 & c_{k-1} \cap D_{k-1} =\varnothing,\cdots,c_{N} \cap D_{N} =\varnothing\}\Big).
 \end{align*}

\[ = \underset{y \in \mathbb{C}\mathbb{P}^1}{inf}\, \tilde{\mathbb{P}}\left(\{c_k \cap \mathcal{D}^{-1}(D(y,e^{-2\lambda''k})) \neq \varnothing\}\right)\]

(this is because the event $\left\{c_{k-1} \cap D_{k-1} =\varnothing,\cdots,c_{N} \cap D_{N} =\varnothing \right\}$ and the event $\left\{c_k \cap \mathcal{D}^{-1}(D(y,e^{-2\lambda''k})) \neq \varnothing\right\}$ are independent)

\[  \geq  \underset{x\in D(0,r)}{inf}\tilde{\mathbb{P}}\left(\{c_{k}\cap D(x,\beta e^{-2\lambda''k})\neq \varnothing\}\right) \]

(this last inequality comes from lemma \ref{lem1})

\[=\underset{x\in D(0,r)}{inf}\tilde{\mathbb{P}}\left(\{c_{0}\cap D(x,\beta e^{-2\lambda''k})\neq \varnothing\}\right) \]

(because the paths $c_k$ are i.i.d.)
\end{proof}

So, we still have to prove the following:
\begin{lem}
\label{lemmme}
There is a constant $c$ such that for $k$ big enough, we have:
\[\underset{x\in D(0,r)}{inf}\tilde{\mathbb{P}}\left(\{c_{0}\cap D(x,\beta e^{-2\lambda''k})\neq \varnothing\}\right)\geq \frac{c}{k}\]
\end{lem}

The proof of this fact is a little bit technical. So, we start with the general idea: we will prove that the value of $\underset{x\in D(0,r)}{inf}\tilde{\mathbb{P}}\left(\{c_{0}\cap D(x,\beta e^{-2\lambda''k})\neq \varnothing\}\right)$ is almost the same as the probability that a Brownian path in $\mathbb{C}$ (with Euclidean metric) starting from $z=\frac{1}{2}$ would reach $D(0,e^{-k})$ before reaching $\partial D(0,1)$. Using Brownian invariance by the exponential map, this probability is equal to the probability that a plane Brownian motion starting from $z=-\log 2$ would reach the line $x=-k$ before reaching the line $x=0$. As the two canonical coordinates of a plane Brownian motion are one-dimensional Brownian motions, the previous probability is equal to $\mathbb{P}_{-\log 2}(T_{-k}\leq T_0)$ (the probability that a Brownian motion in $\mathbb{R}$ starting from $-\log 2$ would reach the point $-k$ before reaching the point $0$). For all $x\in[-k;0]$, the map $f(x)=\mathbb{P}_x(T_{-k}\leq T_0)$ is harmonic and satisfies $f(-k)=1$, $f(0)=0$. We deduce that $f(x)=-\frac{x}{k}$. Hence the desired probability is $f(-\log 2)=\frac{\log 2}{k}$.\newline

Let us give a precise proof. Recall that $\mathbb{P}_y$ is the Wiener measure of the Brownian motion starting from $y$ (Brownian motion associated to the Poincar\'{e} metric of the disc if $y$ belongs to the Poincar\'{e} disc and associated to the Euclidean metric if $y$ belongs to $\mathbb{C}$). Denote $\mathbb{P}_{m}:=\int \mathbb{P}_y dm(y)$ where $m$ is the exit measure of $V_{Id}=D(0,\delta')$ for a Brownian path starting from $0$. For a closed set $A$, and a Brownian path $\omega$, denote $T_A(\omega)$ the reaching time of the set $A$. Let $\epsilon>0$ and $x \in D(0,r)$. Choose $\gamma\in\pi_1(\Sigma)$ such that $F_{\gamma}\cap D(x,\epsilon)=\varnothing$. Then, we have:

\[\underset{x\in D(0,r)}{inf}\tilde{\mathbb{P}}\left(\{c_{0}\cap D(x,\beta e^{-2\lambda''k})\neq \varnothing\}\right)\]
  \[\geq\tilde{\mathbb{P}}\left(\{c_{0}\cap D(x,\beta e^{-2\lambda''k}) \neq \varnothing \}\cap \{\gamma_{N_1}=\gamma\}\right)\]
	As the event $\{\gamma_{N_1}=\gamma\}=\{X_{N_1}=\gamma\}$ contains the event $\{N_1=1\}\cap\{X_1=\gamma\}$, we deduce that the previous probability is greater than:
   \[\tilde{\mathbb{P}}\left(\{c_{0}\cap D(x,\beta e^{-2\lambda''k}) \neq \varnothing \}\cap \{X_{1}=\gamma\}\cap \{N_1=1\}\right)\]
  \[\geq\tilde{\mathbb{P}}\left(\{c_{0}\cap D(x,\beta e^{-2\lambda''k}) \neq \varnothing \}\cap \{X_{1}=\gamma\}\cap \{\alpha_1\leq\frac{1}{C^2}\}\right)\]
  \[= \frac{1}{C^2}\cdot\mathbb{P}_m\left(\{T_{D(x,\beta e^{-2\lambda''k})}\leq T_{\cup F_{\alpha}}\}\cap \{T_{F_{\gamma}}\leq T_{\cup F_{\alpha}}\}\right)\]
  If $k$ is big enough so that $\beta e^{-2\lambda''k}<\frac{\epsilon}{2}$, then by the strong Markov property, the last quantity is
  \begin{align*}
  \geq \frac{1}{C^2}\cdot&\mathbb{P}_m\left(T_{D(x,\frac{\epsilon}{2})}\leq T_{\cup F_{\alpha}}\right)\cdot\underset{y \in \partial D(x,\frac{\epsilon}{2})}{inf}\mathbb{P}_y\left(T_{D(x,\beta e^{-2\lambda''k})}\leq T_{\partial D(x,\epsilon)}\right)\\
  &\cdot\underset{z \in \partial D(x,\epsilon)}{inf}\mathbb{P}_z\left(T_{F_{\gamma}}\leq T_{\cup F_{\alpha}}\right)
  \end{align*}
 As $x \in D(0,r)$, $\exists a>0$ (which does not depend on  $x$) such that:
 \[\mathbb{P}_{m}\left(T_{D(x,\frac{\epsilon}{2})}\leq T_{\cup F_{\alpha}}\right)\cdot\underset{z \in \partial D(x,\epsilon)}{inf}\mathbb{P}_z\left(T_{F_{\gamma}}\leq T_{\cup F_{\alpha}}\right)\geq a\]
 
 \begin{lem}
 $\exists b>0$ (which does not depend on $x$) such that:
 \[\forall y \in \partial D(x,\frac{\epsilon}{2}),\, \mathbb{P}_y\left(T_{D(x,\beta e^{-2\lambda''k})}\leq T_{\partial D(x,\epsilon)}\right)\geq \frac{b}{k}\]
 \end{lem}
 
 \begin{proof}
 For $p\in\mathbb{C}$, denote $D_{eucl}(p,\alpha)$ the disc with centre $p$ and radius $\alpha$ in $\mathbb{C}$ for the Euclidean metric. Let $y \in \partial D(x,\frac{\epsilon}{2})$. There are constants $c_1>0$, $0<c_2<1$ such that, for $k$ big enough, there is a biholomorphism $\Psi_k$ which identifies:
 \begin{itemize}
 \item $D(x,\beta e^{-2\lambda''k})$ and $D_{eucl}(0,c_1 e^{-2\lambda''k}):=D_1(k)$
 \item $D(x,\frac{\epsilon}{2})$ and $D_{eucl}(0,c_2):=D_2$
 \item $D(x,\epsilon)$ and $D_{eucl}(0,1):=D_3$
 \item $y$ and $c_2$ 
 \end{itemize}
 By the conformal invariance of the Brownian motion:
 \[\mathbb{P}_y\left(T_{D(x,\beta e^{-2\lambda''k})}\leq T_{\partial D(x,\epsilon)}\right)=\mathbb{P}_{c_2}\left(T_{D_1(k)}\leq T_{\partial D_3}\right)\]
 The exponential map sends:
 \begin{itemize}
 \item the line $\Delta_1(k):=\{x=\log(c_1 e^{-2\lambda''k})\}$ onto $\partial D_1(k)$,
 \item the line $\Delta_2:=\{x=\log(c_2)\}$ onto $\partial D_2$,
 \item the line $\Delta_3:=\{x=0\}$ onto $\partial D_3$.
 \end{itemize}
 So, by the conformal invariance of the Brownian motion, there is a constant $b$ such that for $k$ big enough:
 \[\mathbb{P}_{c_2}\left(T_{D_1(k)}\leq T_{\partial D_3}\right)=\mathbb{P}_{\log(c_2)}\left(T_{\Delta_1(k)}\leq T_{\Delta_3}\right)=\frac{-\log(c_2)}{2\lambda''k-\log(c_1)}\geq\frac{b}{k}\]

 \end{proof}
 
 So we found a constant $c=\frac{ab}{C^2}$ such that for $k$ big enough, we have $\underset{x\in D(0,r)}{inf}\tilde{\mathbb{P}}\left(\{c_{0}\cap D(x,\beta e^{-2\lambda''k})\neq \varnothing\}\right)\geq \frac{c}{k}$. This ends the proof of lemma \ref{lemmme} and that of the theorem.

\begin{rem} \label{remPSU2}
In theorem \ref{theoprincipal}, we made the assumption that $\Gamma$ is non elementary (this assumption was necessary to get the positivity of the Lyapounov exponent).
Remark that the conclusion holds if $\Gamma$ is conjugate to a subgroup of $PSU(2,\mathbb{C})$. Indeed, in this case, there exists $k_1>0$ such that for amost every $\tilde{\omega} \in \Omega$, for all $n\in\mathbb{N}$ the path $\mathcal{D}(c_n(\tilde{\omega}))$ contains two points at spherical distance greater than $k_1$. As a group conjugated to a subgroup of $PSU(2,\mathbb{C})$ quasi-preserves the spherical metric, we have:
\[\exists k_2>0 \text{ such that } \forall \gamma \in \Gamma, \forall z,z'\in\mathbb{C}\mathbb{P}^1, d(\gamma\cdot z,\gamma\cdot z')>k_2.d(z'z')\]
So almost surely, for all $n\in\mathbb{N}$, the path $\mathcal{D}(\omega_n)=Y_n\cdot\mathcal{D}(c_n)$ contains two points at distance greater than $k_1.k_2$. So almost surely, $\mathcal{D}(\omega(t))$ does not have limit when $t$ goes to infinity.
\end{rem}
	
\subsection{Proof in the case where $\mathcal{D}$ is not onto.}

Let $(x_0,z_0)$ be a couple of points in $\tilde{\Sigma}\times\mathbb{C}\mathbb{P}^1$ such that $\mathcal{D}(x_0)=z_0$ and let $h$ be the germ of $\mathcal{D}^{-1}$ satisfying $h(z_0)=x_0$. We are going to prove that for almost every Brownian path $\omega$ starting from $z_0$, the germ $h$ cannot be analytically continued along $\omega([0,\infty[)$.

Let $U$ be the open set in $\mathbb{C}\mathbb{P}^1$ defined by $U:=\mathcal{D}(\tilde{\Sigma})$. Its complementary $U^c$ is a closed $\Gamma$-invariant set (infinite because $\Gamma$ is not elementary). As $\Gamma$ is non elementary, we are in one of the following situations (see \cite[Paragraph 1]{S} for a proof):
\begin{enumerate}
\item either $\Gamma$ is dense in $PSL(2,\mathbb{C})$,
\item or $\Gamma$ is discrete,
\item or, replacing $\Gamma$ by a subgroup of index $2$ if necessary, $\Gamma$ is conjugate to a dense subgroup of $PSL(2,\mathbb{R})$.
\end{enumerate}
Case $1.$ is impossible because $\Gamma$ leaves invariant the closed set $U^c\neq \mathbb{C}\mathbb{P}^1$. In case $2.$, $\Gamma$ is Kleinian. The limit set $\Lambda(\Gamma)$ being the smallest closed  $\Gamma$-invariant set of $\mathbb{C}\mathbb{P}^1$, we have $\Lambda(\Gamma)\subset U^c$. As $\Gamma$ is non elementary, a theorem of Myrberg \cite{My} (see also \cite{Do}) asserts that the logarithmic capicity of $\Lambda(\Gamma)$ is strictly positive. Hence  $\Lambda(\Gamma)$ (and so $U^c$) is visited by the Brownian motion in finite time, which implies that $h$ cannot be analytically continued along a generic Brownian path. In case $3.$, $U^c$ contains a Jordan curve. So $U^c$ is also visited by the Brownian motion.

\end{proof}

\section{Analytic continuation of holonomy germs of algebraic foliations.}\label{sectionRiccati}

\subsection{Riccati foliations and branched projective structures.}

Let $(\Pi,M,X,\mathcal{F})$ be a Riccati foliation (see the definition in the introduction). Using the transversality of a generic fibre with $\mathcal{F}$, we can define a monodromy representation associated to such foliations: denote $\{x_1,\cdots,x_n\}$ the points in $X$ such that the fiber over $x_i$ is an invariant line. Denote $\Sigma=X-\{x_1,\cdots,x_n\}$. Fix $x_0\in X$. Let $\alpha:[0,1]\rightarrow \Sigma$ be a closed curve in $\Sigma$ based in $x_0$. Let $z \in \Pi^{-1}(x_0):=F_{x_0}$. There is an unique path $\tilde{\alpha}:[0,1]\rightarrow M$ lifting $\alpha$, belonging to the leaf through $z$ and satisfying $\tilde{\alpha}(0)=z$. The map $z\mapsto\phi_{\alpha}(z)=\tilde{\alpha}(1)$ is a biholomorphism of $F_{x_0}$ which only depends on the homotopy class of $\alpha$. Then, a local trivialisation of the fibre-bundle around $x_0$ gives an identification $F_{x_0}\cong \mathbb{C}\mathbb{P}^1$ and we obtain a representation:
$$\rho:\pi_1(\Sigma,x_0)\longrightarrow PSL(2,\mathbb{C}).$$
called \emph{monodromy representation of the foliation}.
Take any holomorphic section $s:X\to M$ not invariant by the foliation (recall that such a section always exists, see remark \ref{rem-section}). We can transport by the foliation the unique complex projective structure on $F_{x_0}$ (or on any other non invariant fiber). We obtain a branched complex projective structure on $S:=s(\Sigma)\cong \Sigma$ whose monodromy representation is the monodromy representation of the foliation (the branched points are the points of $S$ where the foliation is tangent to $S$). By definition, if $p\in S$ is not a branched point and if $h:(F_{x_0},p_0)\to (S,p)$ is a holonomy germ of the foliation, the analytic continuation of $h^{-1}$ defines a developing map of the complex projective structure on $S$. 

We have just explained how to pass from a Riccati foliation to a complex projective structure. Reciprocally, starting from a parabolic branched complex projective structure on a Riemann surface $\Sigma$ of finite type, we can obtain a Riccati foliation after suspending the representation and compactifying with local models as explained briefly in the introduction (see also \cite{DD} or \cite{CDFG}).

\subsection{Proof of theorem \ref{theoRiccati}.}

Item $1.$ is a direct application of theorem \ref{theoprincipal}.
Proof of item $2$ goes as follows: let $(s_i)_{i=0,1}$ be two sections of $\Pi$, $S_i=s_i(\Sigma)$ and $\overline{S_i}=s_i(X)$ . Let $g_1$ be a complete metric on $\overline{S_1}$ in its conformal class. Let $h:(\overline{S_1},p_1)\to(\overline{S_0},p_0)$ be a holonomy germ. We want to prove that $h$ can be analytically along a Brownian motion $(B_t)_{t\geq 0}$ (with respect to the metric $g_1$) starting at $p_1$. First, using the strong Markov property, one can assume that $p_1\in S_1$. Moreover, $(B_t)$ does not visit the points $\{x_1,\cdots,x_n\}$. 
Secondly, $h$ can be written $h=\mathcal{D}_0^{-1}\circ\mathcal{D}_1$ where, for $i\in\{0,1\}$, $\mathcal{D}_i$ is a developing map associated to the branched projective structure on $S_i$. By the conformal invariance of the Brownian, after time reparametrization, $\mathcal{D}_1\circ B_t$ is a Brownian motion in $\mathbb{C}\mathbb{P}^1$ along which $\mathcal{D}_2^{-1}$ can be analytically continued by Item $1$. This concludes the proof.

\section{Proof of theorem \ref{theo2}.}\label{sectionproof2}

Let $\Sigma$ be a hyperbolic Riemann surface of finite type. Let $\mathcal{D}:\tilde{\Sigma}=\mathbb{D}\rightarrow \mathbb{C}\mathbb{P}^1$ and $\rho:\pi_1(\Sigma)\rightarrow PSL(2,\mathbb{C})$ be a couple developing map-monodromy representation associated to a branched complex projective structure on $\Sigma$. Assume that this structure is parabolic type and $\rho$ is non-elementary. We have to prove that for almost every Brownian path $\omega$ starting at $0\in\mathbb{D}$, there exists $z(\omega) \in \mathbb{C}\mathbb{P}^1$ such that:
$$\frac{1}{t}\cdot \displaystyle\int_0^t\delta_{\mathcal{D}(\omega(s))}\cdot ds\underset{t\to\infty}\longrightarrow\delta_{z(\omega)}.$$

As in the proof of the previous theorem, we are going to use the discretization procedure of Furstenberg, Lyons, Sullivan. Nevertheless, the notations are slightly modified. If $\tilde{\omega}=(\omega,\alpha) \in \tilde{\Omega}$, the infinite path $\omega$ can be written as an infinite concatenation of paths:
\[\omega=\beta_0\ast\omega_0\ast\omega_1\ast\cdot\cdot\cdot\]
where $\beta_0=\omega_{|[0,S_{N_0}]}$ and for $k\geq 0$, $\omega_k=\omega_{|[S_{N_k},S_{N_{k+1}}]}$. For $k\geq 0$, we define $c_k:=X_{N_k}^{-1}\cdot\omega_k$. Then we have:
\[\omega=\beta_0\ast X_{N_0}c_0\ast X_{N_1}c_1\ast\cdot\cdot\cdot\]
Using $\rho$-equivariance of $\mathcal{D}$, we have:
\[\mathcal{D}(\omega)=\mathcal{D}(\beta_0)\ast \rho(X_{N_0})\mathcal{D}(c_0)\ast \rho(X_{N_1})\mathcal{D}(c_1)\ast\cdot\cdot\cdot\]

The sequence of random variables  $X_{N_k}$ is a realisation of a right random walk in $\pi_1(\Sigma)$ whith law $\mu$ and the sequence $Y_{N_k}=\rho(X_{N_k})$ is a realisation of a right random walk in $\rho(\pi_1(\Sigma))$ whith law $\tilde{\mu}=\rho_*\mu$. Let $y_k(\tilde{\omega})$ and $z_k(\tilde{\omega})$ be the two sequences of random points in $\mathbb{C}\mathbb{P}^1$ defined in proposition \ref{propma}. According to remark \ref{remarque}, almost surely $z_k(\tilde{\omega})\rightarrow z(\tilde{\omega})$.

In order to prove the theorem, it is enough to prove that for $\tilde{\mathbb{P}}$-almost every $\tilde{\omega}=(\omega,\alpha) \in \tilde{\Omega}$, for all $\epsilon>0$, we have:
\begin{equation}
\lim\limits_{t \to \infty}\frac{1}{t}\cdot leb\left\{u \in[0,t]\text{ such that } \mathcal{D}(\omega(u)) \in D(z(\tilde{\omega}),\epsilon)\right\}=1
\label{equ1}
\end{equation}

For $k\geq 0$, denote:
\[T_k(\tilde{\omega})=leb\left\{t \in [S_{N_k},S_{N_{k+1}}] \text{ such that } \mathcal{D}(c_k(\tilde{\omega})(t)) \in D(y_k(\tilde{\omega}),e^{-\lambda' k})\right\}\]

We have the following:

\begin{pro}\label{pro1}
Almost surely $\lim\limits_{k \to \infty} T_k=0$.
\end{pro}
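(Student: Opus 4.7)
My plan is to show that $\sum_{k}\mathbb{E}[T_k]<\infty$, from which $T_k\to 0$ almost surely follows by Markov's inequality and Borel--Cantelli. Recall that $T_k$ is the time the reparametrized path $c_k=X_{N_k}^{-1}\cdot\omega(\cdot+S_{N_k})$ spends in the random set $\tilde{A}_k:=\mathcal{D}^{-1}(D(y_k,e^{-\lambda'k}))\subset\tilde{\Sigma}$. The key structural observation is that $y_k$ is $\sigma(X_{N_0},\ldots,X_{N_k})$-measurable whereas, conditionally on this $\sigma$-algebra and on $\omega(S_{N_k})$, the process $c_k$ is a Brownian motion in $\tilde{\Sigma}$ stopped at $\tau_k:=S_{N_{k+1}}-S_{N_k}$; hence $\tilde{A}_k$ and $c_k$ are conditionally independent.

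Conditioning on $y_k$ and on $c_k(0)=x$, Fubini together with the domination of the local time up to $\tau_k$ by the total local time of the transient Brownian motion in $\tilde{\Sigma}$ yields
\[
\mathbb{E}\bigl[T_k\,\big|\,y_k,\,c_k(0)=x\bigr]\;\le\;\int_{\tilde{A}_k} G(x,z)\,d\mathrm{vol}_{\mathrm{hyp}}(z),
\]
where $G$ is the hyperbolic Green's function of $\tilde{\Sigma}$. Since the projective structure is of parabolic type (Remark \ref{rem1}), $\mathcal{D}$ is bilipschitz (for the hyperbolic metrics) in each cuspidal component of $\tilde{\Sigma}$, and in the compact part of $\Sigma$ it has only finitely many branch points modulo $\pi_1(\Sigma)$. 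From this I would extract a uniform bound of the form $|\tilde{A}_k\cap B|_{\mathrm{hyp}}\le C_B\,e^{-2\lambda'k}$ on any bounded ball $B\subset\tilde{\Sigma}$. Combining with the logarithmic singularity of $G$ on the diagonal and with the finite first moment of $\mu$ from Proposition \ref{firstmoment} (which controls the number of fundamental domains visited by $c_k$ during $[0,\tau_k]$), we should obtain $\mathbb{E}[T_k]\le C\cdot k\cdot e^{-2\lambda'k}$, which is summable in $k$.

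The main obstacle will be establishing the area bound $|\tilde{A}_k\cap B|_{\mathrm{hyp}}=O(e^{-2\lambda'k})$ uniformly in the random point $y_k$. Near a critical value of $\mathcal{D}$ the preimage of a small spherical disc contains a full neighborhood of a branch point and so is much larger than a disc of comparable area; in the horoball model of a cusp the spherical metric degenerates with respect to the hyperbolic one, so the ratio between spherical and hyperbolic areas is not bounded. Both pathologies can be handled, the first because branch points are finite modulo $\pi_1(\Sigma)$, the second by using the explicit bilipschitz identification furnished by the parabolicity hypothesis in Remark \ref{rem1}, but combining the estimates into a single uniform bound in $y_k$ is the delicate step.
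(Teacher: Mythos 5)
Your strategy (Borel--Cantelli via a summable bound on $\mathbb{E}[T_k]$, conditional independence of $y_k$ and $c_k$, domination of the occupation time by the Green function integral over $\tilde{A}_k:=\mathcal{D}^{-1}(D(y_k,e^{-\lambda'k}))$) shares several ingredients with the paper's proof, which also uses conditional independence and an occupation-time bound $\mathbb{E}_0[\tau_{D_i}]=\int_{D_i}G_{\mathbb{D}}(0,z)\,d\mathrm{hyp}(z)$. However there is a genuine gap: you integrate $G(x,\cdot)$ over the \emph{entire} set $\tilde{A}_k$, which is unbounded in $\tilde{\Sigma}$, and your proposed area estimate $|\tilde{A}_k\cap B|\le C_B\,e^{-2\lambda'k}$ cannot be upgraded to control this global integral. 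Indeed, by $\rho$-equivariance, $\tilde{A}_k\cap\gamma F=\gamma\cdot\bigl(\mathcal{D}^{-1}(\rho(\gamma^{-1})D(y_k,e^{-\lambda'k}))\cap F\bigr)$, and $\rho(\gamma^{-1})$ can dilate the tiny disc by a factor as large as $\|\rho(\gamma)\|^{2}\asymp e^{2a\,d(0,\gamma\cdot0)}$ (equations \eqref{eq5a}--\eqref{eq5b}). Once $d(0,\gamma\cdot 0)\gtrsim\lambda'k/(2a)$ the dilated disc can cover all of $\mathbb{C}\mathbb{P}^1$ and $\tilde{A}_k$ contains entire translates $\gamma F$; since $G(x,z)\sim e^{-d(x,z)}$ while the hyperbolic area of the shell at distance $R$ grows like $e^{R}$, the contribution of each such shell to $\int_{\tilde{A}_k}G(x,z)\,d\mathrm{vol}$ is of order $1$ and the integral does not decay with $k$ (in fact it need not even be finite). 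A similar unboundedness occurs inside the cusp, where the spherical metric degenerates against the hyperbolic one with no uniform rate.

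The missing idea, which is the crux of the paper's argument, is a preliminary localization: one first shows that $c_k$ exits the ball $D(0,K\log k)$ only with probability $\le k^{-2}$, which decouples into proving an exponential-tail estimate for $S_{N_1}$ (from \cite{DD2}) together with a Gaussian tail for the Brownian range (Prat). On the complementary event one needs only to understand $\tilde{A}_k\cap D(0,K\log k)$, where the dilation factor is controlled polynomially ($\|\rho(\gamma)\|^2\le k^{4aK}$), the cuspidal part of $D(0,K\log k)$ is at bounded horoheight $\le k^{\alpha}$, and the branching is handled by a local Hölder estimate (Lemma \ref{lem}); this yields a cover by at most $k^{\alpha}$ hyperbolic discs of radius $e^{-\beta k}$, and the Green function integral over those gives $\sup_y\mathbb{P}_0(\tau_{y,k}\ge\epsilon)\le k^{-2}$. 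Finally, the factor $k$ you insert in $\mathbb{E}[T_k]\le C\cdot k\cdot e^{-2\lambda'k}$ is not delivered by the finite first moment of $\mu$ (Proposition \ref{firstmoment}), which controls the expected displacement in \emph{one} step by a $k$-independent constant, not by $k$; the correct polynomial factor $k^{\alpha}$ in the paper comes from counting lattice points $\gamma$ with $d(0,\gamma\cdot 0)\le 2K\log k$, i.e. from the choice of the localization radius, not from the step distribution.
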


Before proving this proposition, let us show why this implies the theorem.
First, if we assume that almost surely $\lim\limits_{k \to \infty} T_k=0$, then we have almost surely: 
\begin{equation}
\lim\limits_{n \to \infty} \frac{1}{n}\sum_{k=0}^{n-1}T_k=0
\label{equ2}
\end{equation}

Now, according to proposition \ref{propo}, there is a constant $T$ such that, almost surely: 
\begin{equation}
\lim\limits_{n\rightarrow\infty}\frac{S_{N_n}}{n}=T
\label{equ3}
\end{equation}
Let $\tilde{\omega}$ belonging to the full measure set where equations \eqref{equ2} and \eqref{equ3} are satisfied. Let $\epsilon$ be a strictly positive real. According to remark \ref{remarque}, $z_k(\tilde{\omega})\rightarrow z(\tilde{\omega})$. So, there is $I_0(\tilde{\omega})$ such that $\forall k \geq I_0$, we have $D(z_k(\tilde{\omega}),e^{-\lambda'k})\subset D(z(\tilde{\omega}),\epsilon)$. So:
\[\frac{1}{n}\sum_{k=0}^{n-1}T_k(\tilde{\omega})\longrightarrow 0\]
\[\Longrightarrow \frac{1}{S_{N_{n}}}\sum_{k=0}^{n-1}T_k(\tilde{\omega})\longrightarrow 0\]
\[\Longrightarrow \frac{1}{S_{N_{n}}}\sum_{k=I_0}^{n-1}leb\left\{t \in [S_{N_k},S_{N_{k+1}}] \text{ such that } \mathcal{D}(c_k(\tilde{\omega})(t)) \in D(y_k(\tilde{\omega}),e^{-\lambda' k})\right\}\longrightarrow 0\]
which implies by proposition \ref{propma} 
\[\Longrightarrow \frac{1}{S_{N_{n}}}\sum_{k=I_0}^{n-1}leb\left\{t \in [S_{N_k},S_{N_{k+1}}] \text{ such that } \mathcal{D}(\omega(t)) \notin D(z_k(\tilde{\omega}),e^{-\lambda' k})\right\}\longrightarrow 0\]
\[\Longrightarrow \frac{1}{S_{N_{n}}}\sum_{k=I_0}^{n-1}leb\left\{t \in [S_{N_k},S_{N_{k+1}}] \text{ such that } \mathcal{D}(\omega(t)) \notin D(z(\tilde{\omega}),\epsilon)\right\}\longrightarrow 0\]
\[\Longrightarrow \frac{1}{S_{N_{n}}}\sum_{k=1}^{n-1}leb\left\{t \in [S_{N_k},S_{N_{k+1}}] \text{ such that } \mathcal{D}(\omega(t)) \notin D(z(\tilde{\omega}),\epsilon)\right\}\longrightarrow 0\]
\[\Longrightarrow \frac{1}{S_{N_{n}}}.leb\left\{t \in [0,S_{N_{n}}] \text{ such that } \mathcal{D}(\omega(t)) \notin D(z(\tilde{\omega}),\epsilon)\right\})\longrightarrow 0\]
\[\Longrightarrow \frac{1}{S_{N_{n}}}.leb\left\{t \in [0,S_{N_{n}}] \text{ such that } \mathcal{D}(\omega(t)) \in D(z(\tilde{\omega}),\epsilon)\right\})\longrightarrow 1\]
\[\Longrightarrow \frac{1}{n}.leb\left\{t \in [0,n] \text{ such that } \mathcal{D}(\omega(t)) \in D(z(\tilde{\omega}),\epsilon)\right\})\longrightarrow 1\Longrightarrow \eqref{equ1}\]

\paragraph{Beginning of the proof of proposition \ref{pro1}:}
Using Borel-Cantelli, it is enough to prove for all $\epsilon>0$ that: $\tilde{\mathbb{P}}(T_k\geq\epsilon)\leq \frac{2}{k^2}$.
Let $K$ be a positive constant (to be determined later) and denote:
$$A_k=\left\{\tilde{\omega}\text{ s.t. } c_0(\tilde{\omega})\cap D(0,K \log(k))^c\neq \varnothing\right\}$$
We are going to prove the following:

\begin{equation}
\label{equ4}
\tilde{\mathbb{P}}(T_k\geq\epsilon)\leq \tilde{\mathbb{P}}(A_k) + \sup_{y \in \mathbb{C}\mathbb{P}^1} \mathbb{P}_0(\tau_{y,k}\geq\epsilon)
\end{equation}

where $\tau_{y,k}=leb\left\{t \in [0,T_{\partial D(0,K\log k)}] \text{ s.t. } \mathcal{D}(\omega(t)) \in D(y,e^{-\lambda' k})\right\}$.
Let us prove inequality \eqref{equ4}. For this, we define:
$$U_{k,y}=leb\left\{t \in [S_{N_k},S_{N_{k+1}}] \text{ s.t. } \mathcal{D}(c_k(\tilde{\omega})(t)) \in D(y,e^{-\lambda' k})\right\}$$
$$V_{k,y}=leb\left\{t \in [S_{N_0},S_{N_1}] \text{ s.t. } \mathcal{D}(c_0(\tilde{\omega})(t)) \in D(y,e^{-\lambda' k})\right\}$$
As explained in the proof of theorem \ref{theoprincipal2}, $c_k$ and $y_k$ are independent. So, we have:
\begin{align*}
\tilde{\mathbb{P}}(T_k\geq\epsilon)
&\leq\sup_{y \in \mathbb{C}\mathbb{P}^1}\tilde{\mathbb{P}}(U_{k,y}\geq\epsilon)\\
&=\sup_{y \in \mathbb{C}\mathbb{P}^1}\tilde{\mathbb{P}}(V_{k,y}\geq\epsilon)\\
&=\sup_{y \in \mathbb{C}\mathbb{P}^1}\left(\tilde{\mathbb{P}}(\{V_{k,y}\geq\epsilon\}\cap A_k)+\tilde{\mathbb{P}}(\{V_{k,y}\geq\epsilon\}\cap A_k^c)\right)\\
&\leq \tilde{\mathbb{P}}(A_k) + \sup_{y \in \mathbb{C}\mathbb{P}^1} \mathbb{P}_0(\tau_{y,k}\geq\epsilon).
\end{align*}
The last inequality is due to the fact that for all $y\in\mathbb{C}\mathbb{P}^1$, we have: $\{V_{k,y}\geq\epsilon\}\cap A_k^c\subset\{\tau_{y,k}\geq\epsilon\}\times[0;1]^{\mathbb{N}}$ which implies that $\tilde{\mathbb{P}}(\{V_{k,y}\geq\epsilon\}\cap A_k^c)\leq \mathbb{P}_0(\tau_{y,k}\geq\epsilon)$.

Now, we are going to bound the two terms of the previous line. For the term $\tilde{\mathbb{P}}(A_k)$, we have the following proposition:
\begin{pro}
\label{pro2}
There exists $K$ such that for $k$ big enough, $\tilde{\mathbb{P}}(A_k)\leq\frac{1}{k^2}$.
\end{pro}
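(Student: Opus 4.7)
The plan is to decompose the event $A_k$ according to the duration $\tau := S_{N_1} - S_0$ of the excursion $c_0$. Recall that since $X_{N_0} = Id$, $c_0$ is concretely the restriction $\omega|_{[S_0, S_{N_1}]}$ of the Brownian path, viewed in $\mathbb{D}$ and starting at hyperbolic distance $\delta'$ from $0$. Let $T := \inf\{s \geq 0 : d(0, \omega(S_0 + s)) \geq K \log k\}$ be the first time after $S_0$ at which the path reaches hyperbolic distance $K\log k$ from the origin. Then $A_k \subset \{T \leq \tau\}$, so for any constant $M > 0$,
$$\tilde{\mathbb{P}}(A_k) \leq \tilde{\mathbb{P}}(\tau > M \log k) + \mathbb{P}\bigl(T \leq M \log k\bigr).$$

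First I would establish an exponential tail bound for $\tau$: there exist $C_1, c_1 > 0$ with $\tilde{\mathbb{P}}(\tau > u) \leq C_1 e^{-c_1 u}$. Writing $\tau = \sum_{n=1}^{N_1}(S_n - S_{n-1})$, two ingredients combine: (i) $N_1$ is stochastically dominated by a geometric random variable with parameter $1/C^2$, thanks to the uniform bound $\kappa_n \geq 1/C^2$ from the construction of Section \ref{sectiondiscret}; and (ii) each individual step $S_n - S_{n-1}$ has exponential moments, uniformly in $n$. Since $\pi_1(\Sigma)$ acts isometrically on $\mathbb{D}$, (ii) reduces to showing that the first hitting time of $\bigcup_{X} F_X$ after an exit from $V_{Id}$, and the exit time from a single $V_X$, both have exponential moments for the hyperbolic Brownian motion. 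Picking $M$ with $c_1 M \geq 2$ then forces the first term on the right to be $\leq \tfrac{1}{2k^2}$ for $k$ large.

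Second, I would bound $\mathbb{P}(T \leq M \log k)$ using the radial process $r_s := d(0, B_s)$ of hyperbolic Brownian motion in $\mathbb{D}$. It satisfies the SDE $dr_s = \tfrac{1}{2}\coth(r_s)\,ds + d\beta_s$ with $\beta$ a standard one-dimensional Brownian motion, and a standard comparison/reflection argument yields
$$\mathbb{P}_{r_0 = \delta'}\!\Bigl(\max_{0 \leq s \leq u} r_s \geq R\Bigr) \leq 2 \exp\!\Bigl(-\tfrac{(R - \delta' - u/2)^2}{2u}\Bigr)$$
whenever $R > \delta' + u/2$. Applied with $u = M \log k$ and $R = K \log k$, this gives $\mathbb{P}(T \leq M \log k) \leq 2\, k^{-(K - M/2)^2/(2M)}$. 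Choosing $K$ so that $(K - M/2)^2 \geq 4 M$ forces this term to be $\leq \tfrac{1}{2 k^2}$ for $k$ large, and combining with the first step yields the desired $\tilde{\mathbb{P}}(A_k) \leq 1/k^2$.

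The main obstacle is part (ii) of the first step, namely the exponential tail on a single discretization step. The delicate point is the return time to $\bigcup_X F_X$ when the path visits a cusp of $\tilde{\Sigma}$, where the Brownian motion may linger considerably; the naive bound from Proposition \ref{firstmoment} only provides a first moment. The required exponential integrability is nonetheless available because $\Sigma$ is of finite type: the parabolic normal form of Remark \ref{rem1} makes each cuspidal end of $\tilde{\Sigma}$ bilipschitz to a standard hyperbolic horoball, on which these hitting times can be controlled explicitly by one-dimensional drifted Brownian comparisons.
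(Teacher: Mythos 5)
Your proposal is correct and follows the same overall strategy as the paper's proof: split $A_k$ according to whether the duration of the excursion $c_0$ is large, control the tail of $S_{N_1}$ via an exponential moment, and, on the complementary event, control the radial displacement of hyperbolic Brownian motion over a logarithmic time window; then tune $K$ (and the auxiliary constant $M$, called $C_1$ in the paper) so that both terms are $O(k^{-2})$. Two points of execution differ, neither of which affects correctness. First, for the exponential moment $\mathbb{E}[e^{\alpha S_{N_1}}]<\infty$ the paper simply invokes \cite[Proposition 2.15]{DD2}, whereas you propose to rederive it from (i) the geometric domination of $N_1$ coming from $\kappa_n\geq 1/C^2$ and (ii) uniform exponential moments of the individual steps $S_n-S_{n-1}$; this works, but note that $N_1$ and the step lengths are both measurable functions of $\omega$, so you cannot treat the sum as a random-stopped sum of independent increments — instead you must split on $\{N_1=m\}$, use $\prod_{j<m}(1-\kappa_j)\kappa_m\leq (1-1/C^2)^{m-1}$, bound $\mathbb{P}_\omega(S_m-S_0\geq t)\leq e^{-\alpha t}B(\alpha)^m$, and pick $\alpha$ small enough that $(1-1/C^2)B(\alpha)<1$. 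Second, for the displacement bound the paper cites Prat's sub-Gaussian estimate $\mathbb{P}_y(\xi_1\geq r)\leq e^{-cr^2}$ and bootstraps it via an exponential moment and the strong Markov property; your explicit SDE comparison for the radial process $dr_s=\tfrac12\coth(r_s)\,ds+d\beta_s$ gives the same qualitative bound, but the stated inequality with drift exactly $u/2$ is not literally justified because $\coth(r)>1$ and blows up near $r=0$ — you need to either cut off at a fixed threshold (which only changes a constant) or cite Prat as the paper does. Finally, your appeal to Remark \ref{rem1} for the horoball geometry of the cuspidal ends is acceptable, though the fact you need is really about the hyperbolic structure of $\tilde\Sigma$ (finite-type implies each cusp lifts to a horoball), independent of the developing map.
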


\begin{proof}

In \cite[Proposition 2.15]{DD2}, the authors prove that there is $\alpha>0$ such that:
$$\mathbb{E}[e^{\alpha S_{N_1}}]=M<\infty$$
Using Markov inequality, one deduces:
$$\tilde{\mathbb{P}}[S_{N_1}\geq t]=\tilde{\mathbb{P}}[e^{\alpha S_{N_1}}\geq e^{\alpha t}]\leq e^{-\alpha t}\mathbb{E}[e^{\alpha S_{N_1}}]=M e^{-\alpha t}$$
 
If $\omega$ is a Brownian path, denote:
\[\xi_t(\omega)=\sup_{0\leq u\leq t}d(\omega(0),\omega(t))\]
Let $C_1>0$ satisfying $\alpha C_1>2$.
We have:
\begin{align*}
\tilde{\mathbb{P}}(A_k)
&\leq \tilde{\mathbb{P}}(A_k\cap \{S_{N_1}\geq C_1 \log(k)\})+\tilde{\mathbb{P}}(A_k\cap \{S_{N_1}\leq C_1 \log(k)\})\\
&\leq \tilde{\mathbb{P}}(S_{N_1}\geq C_1 \log(k))+\mathbb{P}_0(\xi_{C_1 \log(k)}\geq K\log(k))
\end{align*}
The first term of the right hand side satisfies for $k$ big enough:
$$\tilde{\mathbb{P}}(S_{N_1}\geq C_1 \log(k))\leq M e^{-\alpha C_1 \log k}\leq \frac{1}{2k^2}$$

In order to bound the second term, we will use the following estimate (see \cite[paragraph 6]{P} for a proof): there is $c>0$ such that for all $y\in \mathbb{D}$ and for all  $r\geq 2$, we have $\mathbb{P}_y(\xi_1\geq r)\leq e^{-c r^2}$. Hence:

\begin{align*}
\mathbb{E}_y[e^{\xi_1}]
&=1+\int_{u>0}e^u \mathbb{P}_y(\xi_1\geq u)du\\
&\leq 1+\int_{u>0}e^{u-cu^2}du
\end{align*}

The last integral converges. Let $a_4$ be the constant satisfying $e^{a_4}=1+\displaystyle{\int_{u>0}}e^{u-cu^2}du$. Denote $\lfloor t \rfloor$ the integral part of $t$. Using successively the Markov inequality and the strong Markov property of Brownian motion, one gets:

\begin{align*}
\mathbb{P}_0(\xi_t\geq r)
&\leq e^{-r} \mathbb{E}[e^{\xi_t}]\\
&\leq e^{-r} \mathbb{E}\left[\exp(\sum_{k=0}^{\lfloor t\rfloor-1}\sup_{k\leq s\leq k+1}d(\omega(k),\omega(s)))\right]\\
&\leq e^{-r}.(sup_{y \in \mathbb{D}} \mathbb{E}_y[e^{\xi_1}])^t
\end{align*}

 For $t=C_1\log(k)$ and $r=K \log(k)$, one gets:
\[\mathbb{P}_0\left(\xi_{C_1 \log(k)}\geq K \log(k)\right)\leq k^{- K}\cdot k^{a_4C_1}\]

Consequently:

\[\tilde{\mathbb{P}}(A_k)\leq \frac{1}{2k^2}+k^{- K+a_4 C_1}\]
Choose $K$ big enough so that $- K+a_4 C_1<-2$. We get that for $k$ big enough: $\tilde{\mathbb{P}}(A_k)\leq \frac{1}{k^2}$.

\end{proof}


Up to now, we fix $K$ satisfying the previous proposition. In order to bound the second term $\sup_{y \in \mathbb{C}\mathbb{P}^1}\mathbb{P}_0(\tau_{y,k}\geq\epsilon)$ of \eqref{equ4}, we will need the following:
\begin{pro}
\label{pro3}
There exist two positive constants $\alpha$ and $\beta$ such that for all $y\in\mathbb{C}\mathbb{P}^1$ and for $k$ big enough, the intersection of $\mathcal{D}^{-1}(D(y,e^{\lambda'k}))$ with $D(0,K\log k)$ is included in an union of at most $k^{\alpha}$ discs with radius $e^{-\beta k}$.
\end{pro}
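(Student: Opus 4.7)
The plan is to control the map $\mathcal{D}\colon D(0, K \log k) \to \mathbb{C}\mathbb{P}^1$ through two quantities: its topological degree (to bound the number of preimage components) and the infimum of its hyperbolic-to-spherical derivative norm (to bound the diameter of each component). I will work throughout with the hyperbolic metric on $\mathbb{D}$ and the spherical metric on $\mathbb{C}\mathbb{P}^1$; inside $D(0, K \log k)$ the ratio of Euclidean to hyperbolic lengths at a point $z_0$ is $(1-|z_0|^2)/2 \leq 1$, so any hyperbolic diameter bound of order $e^{-\beta k}$ on a preimage component yields a Euclidean bound of the same order, which is what the statement requires.

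Fix a fundamental domain $F\subset\mathbb{D}$ for $\pi_1(\Sigma)$ of standard shape, with a compact core $F_c$ together with finitely many horoball pieces, one per cusp. For each cusp $p$, Remark \ref{rem1} provides a biholomorphism $\tilde{\phi}\colon\mathbb{H}_{\geq 1}\to\mathcal{H}_p$ which is bilipschitz for the hyperbolic metrics and satisfies $\mathcal{D}\circ\tilde{\phi}(\tau)=\tau$. In the coordinate $\tau$ the hyperbolic-to-spherical norm of $d\mathcal{D}$ equals $\mathrm{Im}(\tau)/(1+|\tau|^2)\geq c/|\tau|$ on $\{\mathrm{Im}(\tau)\geq 1\}$, while the bilipschitz property forces $|\tau|\leq e^{O(K\log k)}=k^{O(K)}$ on $\mathcal{H}_p\cap D(0,K\log k)$; this yields a lower bound $\|d\mathcal{D}\|\geq c\,k^{-C}$ on each cuspidal slice. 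On translates of $F_c$ meeting $D(0,K\log k)$, $\|d\mathcal{D}\|$ is bounded below by a positive constant away from the finitely many branch points, uniformly across translates because both metrics descend to $\Sigma$. Altogether one obtains $\|d\mathcal{D}\|\geq c\,k^{-C}$ throughout $D(0,K\log k)$ away from branch points.

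For counting, the hyperbolic area of $D(0,K\log k)$ is $O(k^K)$ and $\Sigma$ has finite area, so the number of translates of $F$ meeting $D(0,K\log k)$ is $O(k^K)$; on each translate $\mathcal{D}$ has bounded degree (on $F_c$ by compactness, equal to $1$ on each cuspidal slice since $\mathcal{D}\circ\tilde{\phi}$ is the identity), hence $\mathcal{D}^{-1}(D(y,e^{-\lambda'k}))\cap D(0,K\log k)$ has $O(k^K)$ components, absorbed into $k^{\alpha}$ for any $\alpha>K$ and $k$ large. A component around a non-critical preimage has hyperbolic diameter $\leq C'\,e^{-\lambda'k}/\|d\mathcal{D}\|\leq C'\,k^C\,e^{-\lambda'k}$, while a component around a branch point of order $d$ has hyperbolic diameter $\leq C_d\,e^{-\lambda'k/d}$; the branching orders on $\Sigma$ being bounded, both are $O(e^{-\beta k})$ for some $\beta>0$ and $k$ large. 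The main obstacle is the cusp analysis: quantifying how the spherical metric degenerates on $\tilde{\phi}(\mathbb{H}_{\geq 1})$ as $|\tau|\to\infty$ in terms of hyperbolic distance in $\mathbb{D}$ is what forces the use of the bilipschitz property of $\tilde{\phi}$ and the normal form $\mathcal{D}\circ\tilde{\phi}(\tau)=\tau$, both consequences of the \emph{parabolic} hypothesis on the projective structure (strictly stronger than parabolicity of the representation).
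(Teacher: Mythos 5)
There is a genuine gap in the step that asserts a uniform lower bound for $\|d\mathcal{D}\|$ (hyperbolic-to-spherical) across translates $\gamma F_c$ ``because both metrics descend to $\Sigma$.'' The hyperbolic metric on $\mathbb{D}$ descends to $\Sigma$, but the spherical metric on $\mathbb{C}\mathbb{P}^1$ is \emph{not} $\rho(\pi_1(\Sigma))$-invariant when $\rho$ is non-elementary, so the pulled-back metric $\mathcal{D}^*g_{sph}$ does \emph{not} descend. From $\mathcal{D}(\gamma z)=\rho(\gamma)\mathcal{D}(z)$ and the fact that $\gamma$ is a hyperbolic isometry, one gets $\|d\mathcal{D}_{\gamma z}\|=|\rho(\gamma)'(\mathcal{D}(z))|_{sph}\cdot\|d\mathcal{D}_z\|$, and the spherical distortion $|\rho(\gamma)'|$ can be as small as $\|\rho(\gamma)\|^{-2}$. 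Thus your claimed constant lower bound on translates of $F_c$ is false, and without controlling it the component diameters need not be $e^{-\beta k}$. The paper's argument is driven by precisely the estimate you leave out: parabolicity of the \emph{representation} gives $\log\|\rho(\gamma)\|\leq a\,d(0,\gamma\cdot 0)$, so that for the $\gamma$ with $d(0,\gamma\cdot 0)\leq 2K\log k$ the distortion is only polynomial in $k$ (see equations \eqref{eq5a}--\eqref{eq5b} and inclusion \eqref{equ6}). This polynomial degradation is then absorbed into the exponential $e^{-\lambda'k}$ (producing $e^{-\alpha_1 k}$ with $\alpha_1<\lambda'$), which reduces the whole problem to a single fundamental domain. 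The same degradation affects your branch-point constants $C_d$ on translates. You attribute the role of parabolicity solely to the cusp normal form and bilipschitz control, but the global control of $\|\rho(\gamma)\|$ on the ball of radius $K\log k$ is an independent, essential use of parabolicity of $\rho$; if some cusp monodromy were loxodromic, $\|\rho(\gamma)\|$ would grow exponentially in $d(0,\gamma\cdot0)$ and the polynomial bound would fail.

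Two smaller points. First, the proposition's $D(y,e^{\lambda'k})$ is a typo for $D(y,e^{-\lambda'k})$, which you correctly use. Second, in the cusp computation the bound should be $\mathrm{Im}(\tau)/(1+|\tau|^2)\geq c/|\tau|^2$ (not $c/|\tau|$); this does not affect your conclusion since it remains polynomial in $k$. Apart from these, your decomposition (compact core plus horoball pieces), the cusp analysis via the bilipschitz normal form, and the polynomial count of translates mirror the paper's structure; but for the compact part the paper avoids a pointwise derivative lower bound by using the product estimate of \cite[Lemma 5.1]{AH}, which handles branch points of all orders uniformly without a case split.
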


\begin{proof}
Let us fix $y\in\mathbb{C}\mathbb{P}^1$. Let $F$ be the Dirichlet fundamental domain associated to the base point $0\in \mathbb{D}$:
$$F=\left\{x\in\mathbb{D}\text{ s.t. }\forall \gamma \in \pi_1(\Sigma),\, d(0,x)\leq d(\gamma\cdot 0,x)\right\}$$
Let $D=F\cap D(0,K\log k)$. Note first that:
\begin{equation}
\label{eq5}
D(0,K\log k)\subset \underset{ d(0,\gamma.0)\leq 2K\log k}\bigcup \gamma\cdot D,
\end{equation}
To see this, take $z\in D(0,K\log k)$. There exists $\gamma\in\pi_1(\Sigma)$ such that $z\in\gamma\cdot F$. We have $d(0,\gamma^{-1}\cdot z)=d(\gamma\cdot 0,z)\leq d(0,z)\leq K\log k$. So $z\in\gamma \cdot D$. Moreover, $d(0,\gamma\cdot 0)\leq d(0,z)+d(z,\gamma\cdot 0)\leq 2\cdot d(0,z)\leq 2K\log k$.

Secondly, the $\rho$-equivariance of $\mathcal{D}$ gives for every $\gamma\in\pi_1(\Sigma)$:
$$\mathcal{D}^{-1}\left(D(y,e^{-\lambda'k})\right)\cap\gamma D=\gamma\cdot \left(\mathcal{D}^{-1}(\rho(\gamma^{-1})D(y,e^{-\lambda'k}))\cap D\right)$$
A direct calculation gives:
\begin{equation}
\label{eq5a}
||\rho(\gamma)||^2=\underset{z\in\mathbb{C}\mathbb{P}^1}\sup|\rho(\gamma^{-1})'(z)|.
\end{equation}
Indeed, if $\rho(\gamma^{-1})=\begin{pmatrix}
a&b \\
c&d
\end{pmatrix}$, then we have:
\begin{align*}
||\rho(\gamma)||^2&=\sup_{V\in\mathbb{C}^2-\{0;0\}}\frac{||\rho(\gamma)\cdot V||^2}{||V||^2}\\
                  &=\sup_{W\in\mathbb{C}^2-\{0;0\}}\frac{||W||^2}{||\rho(\gamma^{-1})\cdot W||^2}\\
								  &=\sup_{(z,w)\in\mathbb{C}^2-\{0;0\}}\frac{|z|^2+|w|^2}{|az+bw|^2+|cz+dw|^2}\\
									&=\sup_{z\in\mathbb{C}}\frac{|z|^2+1}{|az+b|^2+|cz+d|^2}\\
									&=\sup_{z\in\mathbb{C}}\frac{1}{|cz+d|^2}\cdot\frac{1+|z|^2}{1+|\rho(\gamma^{-1})(z)|^2}\\
									&=\sup_{z\in\mathbb{C}\mathbb{P}^1}|\rho(\gamma^{-1})'(z)|.
\end{align*}
Moreover, the monodromy being parabolic, we have already seen at the beginning of the proof of theorem \ref{theoprincipal}, that there exists a constant $a$ such that:
\begin{equation}
\label{eq5b}
\log||\rho(\gamma)||\leq a\cdot d(0,\gamma\cdot 0).
\end{equation}
From equations \eqref{eq5a} and \eqref{eq5b}, we deduce the following: if $\gamma\in\pi_1(\Sigma)$ is such that $d(0,\gamma\cdot 0)\leq 2K\log k$, then,
$$\underset{z\in\mathbb{C}\mathbb{P}^1}\sup|\rho(\gamma^{-1})'(z)|\leq k^{4aK},$$
which implies
$$\rho(\gamma^{-1})D\left(y,e^{-\lambda'k}\right)\subset D\left(\rho(\gamma^{-1})y, k^{4aK}\cdot e^{-\lambda'k}\right).$$
If $\alpha_1$ is a constant such that $0<\alpha_1<\lambda'$, we have for $k$ big enough: $ k^{4aK}\cdot e^{-\lambda'k}\leq e^{-\alpha_1 k}$. This implies that, for $k$ big enough and $\gamma$ satisfying $d(0,\gamma\cdot 0)\leq 2K\log k$:
\begin{equation}
\label{equ6}
\mathcal{D}^{-1}\left(D(y,e^{-\lambda'k})\right)\cap\gamma D\subset \gamma\cdot \left(\mathcal{D}^{-1}(D(\tilde{y},e^{-\alpha_1k}))\cap D\right)
\end{equation}
with $\tilde{y}=\rho(\gamma^{-1})y$. To conclude, we will need the following lemma: 

\begin{lem}
\label{lem}
There exist constants $N\in\mathbb{N}$ and $\beta>0$ such that for $k$ big enough and for every $\tilde{y}\in\mathbb{C}\mathbb{P}^1$, the set $\mathcal{D}^{-1}(D(\tilde{y},e^{-\alpha_1k}))\cap D$ is included in an union of at most $N$ discs with radius at most $e^{-\beta k}$.
\end{lem}

Before proving the lemma, let us finish the proof of proposition \ref{pro3}. Using \eqref{equ6} and the previous lemma, we get that for all $\gamma$ satisfying $d(0,\gamma\cdot 0)\leq 2K\log k$, the set $\mathcal{D}^{-1}\left(D(y,e^{-\lambda'k})\right)\cap\gamma D$ is included in an union of at most $N$ discs with radius at most $e^{-\beta k}$. Now, noting that there is $\alpha>0$ such that $Card\left\{\gamma\in\pi_1(\Sigma)\text{ s.t. } d(0,\gamma.0)\leq 2K\log k\right\}\leq k^{\alpha}$ and using equation \eqref{eq5}, we get the desired result.

\end{proof}

\paragraph{Proof of lemma \ref{lem}.}
Recall that the projective structure being parabolic, for any puncture $p$ in $\Sigma$, there is a neighborhood $V$ of $p$ satisfying the following: if $\mathcal{H}$ is the connected component of $proj^{-1}(V)$ which meets the fundamental domain $F$, then there is a biholomorphism bilipschitz $\tilde{\phi}:\mathbb{H}_{\geq 1}\to \mathcal{H}$ such that some developing map satisfies $\mathcal{D}\circ\tilde{\phi}(\tau)=\tau$. Denote $\mathcal{H}_1,\cdots,\mathcal{H}_r$ the set of all such component for each puncture.
Recall that $D=F\cap D(0,K\log k)$ and denote $F_0=D\cap(\cup_i\mathcal{H}_i)^c$. We are going to analyse the intersection of $\mathcal{D}^{-1}\left(D(\tilde{y},e^{-\lambda'n})\right)$ with the compact part $F_0$ and with $D-F_0$ separately. 

Firstly, consider the compact part $F_0$. Let us start with a heuristic argument: $\mathcal{D}'$ has a finite number of zeros $a_i$ in $F_0$. Let $V_i$ be a small neighborhood of $a_i$. $|\mathcal{D}'|$ is bounded away from $0$ on $F_0-\cup V_i$. So, if $\tilde{y}\in\mathbb{C}\mathbb{P}^1$, and $\alpha$ is small, $\mathcal{D}^{-1}\left(D(\tilde{y},\alpha)\right)(\cap F_0-\cup V_i)$ is a finite union of discs with radius of the order of $\alpha$. For each $i$, in local coordinates (for $V_i$ and $\mathcal{D}(V_i)$) the map $\mathcal{D}$ writes: $\mathcal{D}(z)=z^{n_i}$. This implies that $\mathcal{D}^{-1}\left(D(\tilde{y},\alpha)\right)\cap V_i$ is the union of at most $n_i$ discs with radius at most $\alpha^{\frac{1}{n_i}}$. Now, we will give a rigorous proof:  Denote by $N_{\epsilon}(F_0)=\{\tau\in\mathbb{D}\text{ s.t. } d(\tau,F_0)\leq\epsilon\}$ the $\epsilon$-neighborhood of $F_0$. As $\mathcal{D}$ is a non constant holomorphic map, there is a constant $N$ such that any $\tilde{y}$ has at most $N$ preimages by $\mathcal{D}$ in $N_{\epsilon}(F_0)$. Moreover, in \cite[Lemma 5.1]{AH}, we proved the following fact: $\exists C_0>0$ such that for any $\tilde{y}\in\mathcal{D}(N_{\epsilon}(F_0))$ and any $z\in F_0$:
$$d(\mathcal{D}(z),\tilde{y})\geq C_0 \prod_{\mathcal{D}(w)=\tilde{y},w\in N_{\epsilon}(F_0)}d(z,w)$$
Let $\tilde{y}\in\mathbb{C}\mathbb{P}^1$. If $\tilde{y}\notin \mathcal{D}(N_{\epsilon}(F_0))$, then for $k$ big enough $\mathcal{D}^{-1}\left(D(\tilde{y},e^{-\lambda'k})\right)\cap F_0=\varnothing$. Otherwise, $\tilde{y}\in \mathcal{D}(N_{\epsilon}(F_0))$. Then if $N(\tilde{y})$ denote the number of preimages of $\tilde{y}$ in $N_{\epsilon}(F_0)$, and if one takes $z\in\mathcal{D}^{-1}\left(D(\tilde{y},e^{-\lambda'k})\right)\cap F_0$, one gets:
\begin{align}
e^{-\lambda'k}&\geq d(\mathcal{D}(z),\tilde{y})\\
              &\geq C_0 \prod_{\mathcal{D}(w)=\tilde{y},w\in F_0+\epsilon}d(z,w)\\
							&\geq C_0 \left(\inf_{\mathcal{D}(w)=\tilde{y},w\in N_{\epsilon}(F_0)}d(z,w)\right)^{N(\tilde{y})}
\end{align}

This implies:
$$\inf_{\mathcal{D}(w)=\tilde{y},w\in N_{\epsilon}(F_0)}d(z,w)\leq \left(\frac{e^{-\lambda'k}}{C_0}\right)^{\frac{1}{N}}$$

As there exists $\beta$ such that for $k$ big enough $\left(\frac{e^{-\lambda'k}}{C_0}\right)^{\frac{1}{N}}\leq e^{-\beta k}$, we get that $z\in D(w,e^{-\beta k})$, for $w$ a preimage of $\tilde{y}$ by $\mathcal{D}$ in $F_0+\epsilon$.

Now, we are going to analyse the intersection of $\mathcal{D}^{-1}\left(D(\tilde{y},e^{-\lambda'k})\right)$ with each portion of horodisc $D\cap\mathcal{H}_i$. Recall that for each $i$, there is a biholomorphism bilipschitz $\tilde{\phi}_i:\{Im(z)\geq 1\}\to\mathcal{H}_i$ such that some developing map satisfies $\mathcal{D}\circ\tilde{\phi}_i(z)=z$ (see remark \ref{rem1}). $\tilde{\phi}_i$ being bilipschitz, it preserves the lengths modulo multiplications by constants. Hence, we can assume that $\mathcal{H}_i=\{Im(z)\geq 1\}$, the developing map is the inclusion $i:\{Im(z)\geq 1\}\rightarrow \mathbb{C}\mathbb{P}^1$ and $D\cap\mathcal{H}_i=D_{hyp}(i,K\log k)\cap [-\frac{1}{2},\frac{1}{2}]\times[1,+\infty[$. To evaluate the size of the preimage of the intersection of a disc with spherical radius $e^{-\alpha_1 k}$ with $D\cap\mathcal{H}_i$, we just have to compare the spherical metric $ds_{sph}$ and the hyperbolic one $ds_{hyp}$ inside $D_{hyp}(i,K\log k)\cap [-\frac{1}{2},\frac{1}{2}]\times[1,+\infty[$. We have:
$$ds_{hyp}=\frac{1+x^2+y^2}{y}.ds_{sph}$$
Furthermore, there is $\alpha>0$ such that $D_{hyp}(i,K\log k)\cap [-\frac{1}{2},\frac{1}{2}]\times[1,+\infty[\subset [-\frac{1}{2},\frac{1}{2}]\times [1;k^{\alpha}]$, so that $ds_{hyp}\leq(\frac{5}{4}+k^{2\alpha})\cdot ds_{sph}$.
This implies that a disc with spherical radius $e^{-\alpha_1 k}$ is included in a disc with hyperbolic radius $e^{-\alpha_1 k}\cdot (\frac{5}{4}+k^{2\alpha})$. There is $\beta$ such that for $k$ big enough $e^{-\alpha_1 k}\cdot (\frac{5}{4}+k^{2\alpha})\leq e^{-\beta k}$, this implies the desired result.

\paragraph{End of the proof of proposition \ref{pro1}}
Using the previous proposition, we are going to give a bound for the last term of equation \eqref{equ4}, namely we are going to prove that for all $\epsilon>0$, for k big enough:
\begin{equation}
\label{equ7}
\sup_{y \in \mathbb{C}\mathbb{P}^1} \mathbb{P}_0(\tau_{y,k}\geq\epsilon)\leq \frac{1}{k^2}.
\end{equation}
Then a combination of inequalities \eqref{equ4}, \eqref{equ7} and of proposition \ref{pro2} implies that $\tilde{\mathbb{P}}(T_k\geq\epsilon)\leq \frac{2}{k^2}$, which ends the proof of proposition \ref{pro1} and that of the theorem.

The proof of equation \eqref{equ7} goes as follows: fix $y\in\mathbb{P}_1$ and $\epsilon>0$. We recall that:
$$ \tau_{y,k}=leb\left\{t \in [0,T_{\partial D(0,K\log k)}] \text{ s.t. } \mathcal{D}(\omega(t)) \in D(y,e^{-\lambda' k})\right\}$$
and that according to proposition \ref{pro3}, we have:
$$\mathcal{D}^{-1}(D(y,e^{\lambda'k}))\bigcap D(0,K\log k)\subset\bigcup_{i\in I} D_i$$
where $D_i$ are hyperbolic discs with radius $e^{-\beta k}$ and $Card(I)\leq k^{\alpha}$.
We have:
\begin{align*}
 \mathbb{P}_0(\tau_{y,k}\geq\epsilon)
                            &\leq\frac{1}{\epsilon}\cdot \mathbb{E}_0[\tau_{y,k}]\\
														&\leq\frac{1}{\epsilon}\cdot \sum_{i\in I}\mathbb{E}_0[\tau_{D_i}],
\end{align*}
where $\tau_{D_i}=leb\left\{t \in [0,\infty[ \text{ s.t. } \omega(t) \in D_i\right\}$.
Let $D_i$ be such an hyperbolic disc with hyperbolic radius $e^{-\beta k}$.
$$\mathbb{E}_0[\tau_{D_i}]=\int_{D_i}G_{\mathbb{D}}(0,z)dhyp(z)$$
where $G_{\mathbb{D}}(0,z)=-\frac{1}{\pi}\log|z|$ is the Green function.
In order to give an upper bound of this integral, we distinguish two cases:
\begin{itemize}
\item Either $D_i\subset D_{eucl}(0,\frac{1}{4})^c$. In this case $-\log |z|\leq \log 4$ for every $z\in D_i$. Hence $\displaystyle{\int_{D_i}}G_{\mathbb{D}}(0,z)dhyp(z)\leq cst\cdot vol_{hyp}(D_i)\underset{k\to 0}\sim cst\cdot e^{-2\beta k}$.
\item Or $D\subset D_{eucl}(0,\frac{1}{2})$. In this case, for every $z\in D_i$, we have $dhyp(z)=\frac{|dz|^2}{(1-|z|^2)^2}\leq \frac{16}{9}.|dz|^2$. In polar coordinates, $z=re^{i\theta}$ and $|dz|^2=r.dr.d\theta$. Hence:
$$\int_{D_i}G_{\mathbb{D}}(0,z)dhyp(z)\leq cst\cdot \int_{D_i}-\log (r) rdrd\theta.$$
As $-r\log r\leq e^{-1}$ on $[0,1]$, we get that:
\begin{align*}
\int_{D_i}G_{\mathbb{D}}(0,z)dhyp(z)&\leq cst\cdot \int_{D_i}drd\theta\\
                                    &\leq cst\cdot e^{-\beta k}
\end{align*}
The last line is due to the fact that the hyperbolic disc $D_i$ with radius $e^{-\beta k}$ is also an euclidean disc with radius less than $e^{-\beta k}$.
\end{itemize}
So, for $k$ big enough, any hyperbolic disc $D_i$ with radius $e^{-\beta k}$, satisfies $\mathbb{E}_0[\tau_{D_i}]\leq cst\cdot e^{-\beta k}$. So we have:
$$\frac{1}{\epsilon}. \sum_{i\in I}\mathbb{E}_0[\tau_{D_i}]\leq cst\cdot Card(I).e^{-\beta k}\leq cst\cdot k^{\alpha}\cdot e^{-\beta k}\leq \frac{1}{k^2}$$
for $k$ big enough.

Nicolas Hussenot Desenonges\newline
  Instituto de Matem\'{a}tica, Universidade Federal do Rio de Janeiro, \newline
  Ilha do Fundao, 68530, CEP 21941-970, Rio de Janeiro, RJ, Brasil \newline
  \textit {e-mail}: nicolashussenot@hotmail.fr

\end{document}